\newtheorem{theorem}{Theorem}
\newtheorem{corollary}[theorem]{Corollary}
\newtheorem{definition}[theorem]{Definition}
\newtheorem{example}[theorem]{Example}
\newtheorem{proposition}[theorem]{Proposition}
\newtheorem{remark}[theorem]{Remark}
\newenvironment{proof}[1][Proof]{\noindent\textbf{#1.} }{\ \rule{0.5em}{0.5em}}
\begin{document}

\title{Standard transmutation operators for the one dimensional Schr\"{o}%
dinger operator with a locally integrable potential}
\author{Hugo M. Campos \\
{\small School of Mathematics, Yachay Tech, Yachay City of Knowledge, }\\
{\small 100119, Urcuqui, Ecuador}\\
{\small email: hfernandes@yachaytech.edu.ec}}
\maketitle

\begin{abstract}
We study a special class of operators $T$ satisfying the transmutation
relation 
\begin{equation*}
(\frac{d^{2}}{dx^{2}}-q)Tu=T\frac{d^{2}}{dx^{2}}u
\end{equation*}%
in the sense of distributions, where $q$ is a locally integrable function,
and $u$ belongs to a suitable space of distributions depending on the
smoothness properties of $q$.

A method which allows one to construct a fundamental set of transmutation
operators of this class in terms of a single particular transmutation
operator is presented. Moreover, following \cite{Marchenco}, we show that a
particular transmutation operator can be represented as a Volterra integral
operator of the second kind.

We study the boundedness and invertibility properties of the transmutation
operators, and use these to obtain a representation for the general
distributional solution of the equation $\frac{d^{2}u}{dx^{2}}-qu=\lambda u$%
, $\lambda \in \mathbb{C}$, in terms of the general solution of the same
equation with $\lambda =0.\bigskip $

\textbf{Keywords:} Transmutation, Transformation operator, Schr\"{o}dinger
operator, Goursat problem, Weak solution, Spectral parameter power series.
\end{abstract}

\section{Introduction}

If $L$ and $M$ are differential operators, then an operator $T$ satisfying
the relation $LT=TM$, in a suitable functional space, is called a
transmutation operator. Transmutation operators are a useful tool in the
study of differential operators with variable coefficients, see \cite{Beghar}%
, \cite{Carrol}, \cite{LevitanInverse}, \cite{Lions}, and \cite{Marchenco}
for their classical theory.

In the present paper we deal mainly with transmutations corresponding to the
pair $L=\frac{d^{2}}{dx^{2}}-q(x)$ and $M=\frac{d^{2}}{dx^{2}}$. It is well
known that if $q\in C\left[ -a,a\right] $, $a>0$, then a transmutation
operator for such pair on $C^{2}\left[ -a,a\right] $ can be represented in
the form%
\begin{equation}
\mathbf{T}u(x)=u(x)+\int_{-x}^{x}K(x,t)u(t)dt,  \label{VolterraIntro}
\end{equation}%
where $K(x,t)$ is the unique solution of a certain Goursat problem \cite%
{Marchenco}.

Recent work \cite{CKT} shows an interesting connection between the
transmutation operators, and the concept of $L$-base introduced in \cite%
{Fage}. There, a parametrized family of operators $\left\{ T_{c}\right\} $
was introduced possessing the following properties: $i)$ for each $c\in 
\mathbb{C}$, $T_{c}$ is a Volterra integral operator and $T_{0}=\mathbf{T}$; 
$ii)$ if $q\in C^{1}[-a,a]$, then $T_{c}$ is a transmutation operator on $%
C^{2}\left[ -a,a\right]$ for suitable values of $c$; $iii)$ the family of
functions $\left\{ \varphi _{k}\right\} $, where $\varphi _{k}=T_{c}[x^{k}],$
$k=0,1,2,..,$ is an $L$-base.

It is worth mentioning that the $L$-base $\left\{ \varphi _{k}\right\} $ is
the main ingredient to obtain the so called spectral parameter power series
(SPPS) representation, a useful representation of the general solution of
the Sturm--Liouville equation \cite{KravSPPS}, \cite{KP}, and it also arises
in Bers's theory of pseudoanalytic functions \cite{Bers Book}, \cite{KBook}
in the study of a special kind of Vekua equations. These connections have
led to new applications of the transmutation operators to the study of
different problems arising in mathematical physics \cite{CMK}, \cite{CKT}, 
\cite{KKTmod}, \cite{KKTT}, \cite{KST}, \cite{KTacurate}.

Regarding these applications, it is desirable to have the transmutation
property of $T_{c}$ under the most general scenario, which, to the best of
our knowledge, is valid for a continuous potential $q$ and for all
parameters $c\in \mathbb{C}$ (see Remark \ref{Trans_Relax} for further
details).

The first goal of this paper is the study of transmutations for the Schr\"{o}%
dinger operator with integrable coefficients. Our approach is not based on
merely extending existing results from continuous coefficients, but instead
we make our study in the spirit of standard $L$-bases (see Definition \ref%
{standard}). We will consider the class of transmutation operators, here
named standard, which have the property of mapping the nonnegative integer
powers of the independent variable into an s-$L$-base. This class includes
the transmutation operators having the form of a Volterra integral of the
second kind, and others.

This paper is organized as follows. In Section 2 we fix some notation and we
give some auxiliary results needed throughout the paper.

In Section 3, based on the properties of the s-$L$-bases, we arrive at
several results concerning the s-transmutation operators, and we establish
the first main result of the paper, Theorem \ref{Thmapping}, that if a
bounded operator $T$ on $L^{1}[a,b]$ maps the powers of the independent
variable into an s-$L$-base, then $T$ is a transmutation operator on $%
W^{2,1}[a,b]$. This result represents another proof for the transmutation
property of the operator $T_{c}$ and also suggests that some properties or
relations on the s-transmutations might be studied first on smaller spaces,
like the linear space of polynomials $\mathcal{P}(\mathbb{R})$, and then
extended to suitable Banach spaces.

In Section 4 we present a fairly simple method for constructing new
transmutation operators on $\mathcal{P}(\mathbb{R)}$ when one single
s-transmutation is known (Theorem \ref{TheoTrans}) and we also provide
explicit formulas relating any two s-transmutations on $\mathcal{P}(\mathbb{%
R)}$, allowing the construction of one of them in terms of the other
(Proposition \ref{PropTphiTpsi}).

From the viewpoint of applications, it is desirable to have the
transmutation property (\ref{TransOp}) on larger spaces than $\mathcal{P}(%
\mathbb{R)}$. Section 5 is devoted to the study of s-transmutations on
Sobolev spaces. Extending the results of \cite{Marchenco}, we show that a
particular s-transmutation operator on $W^{2,1}[-a,a]$ can be represented in
the form (\ref{VolterraIntro}), where $K(x,t)$ is a weak solution of a
Goursat problem (Theorem \ref{Prop_exist_transm}). Combining this result
with those of Section 4, we arrive at the main result of the paper, Theorem %
\ref{Fulltransmu}, an explicit representation of the general s-transmutation
operator on Sobolev space. As an application, we get the SPPS representation
for the general solution of the equation $\frac{d^{2}u}{dx^{2}}-qu=\lambda u$
in a slightly more general setting than was known previously (Corollary \ref%
{CorSPPS}).

Even in the case of continuous coefficients, the transmutation relation (\ref%
{TransOp}) has been studied mainly in the space of twice differentiable
functions, and assuming the existence of all the involved derivatives in the
classical sense. However, as always, for the best understanding of
differential operators, the framework of distributions is the most suitable.

The second goal of the paper is to study the s-transmutations on spaces of
distributions, which is done in Section 6. Assuming that the potential of
the Schr\"{o}dinger operator is locally integrable, we show that each
s-transmutation operator admits a continuous extension as a linear operator
and as a transmutation operator to a suitable space of distributions,
depending on the smoothness of the potential $q$. Also, this procedure leads
to the construction of all the distributional s-transmutation operators.

Lastly, in Section 7 we provide some conclusions and generalizations.

\section{Preliminaries\label{SectionPre}}

First, following \cite{Neto}, \cite{Brezis}, and \cite{Treves}, let us
introduce the spaces we are going work with. Let $\mathcal{(}a,b)\subset 
\mathbb{R}$ be a bounded open interval and denote by $F(a,b)$ any one of the
following classical spaces: $L^{p}(a,b)$, $C^{k}(a,b)$, or $C^{\infty }(a,b)$%
, for $k\in \mathbb{N}_{0}=\left\{ 0,1,2..\right\} $ and $1\leq p\leq \infty 
$. Note that $L^{p}(a,b)$ and $C^{k}(K)$ are Banach spaces with the
corresponding natural norms whenever $K\subset (a,b)$ is compact, and $%
C^{\infty }(K)$ is a Fr\'{e}chet space when equipped with the topology of
uniform convergence on $K$ in each derivative \cite{Neto}.

Let us denote by $F_{loc}(a,b)$ the subspace of $F(a,b)$ formed by those
functions $\psi \in F(K)$, for each compact set $K\subset (a,b)$. By $%
F_{c}(a,b)$ we denote the set of functions $\psi \in F(a,b)$ such that $%
\func{supp}\psi \subset (a,b)$, here $\func{supp}\psi $ is defined as the
complement of the largest open set on which $\psi $ vanishes a.e. We endow $%
F_{c}(a,b)$ with the inductive limit topology, turning $F_{c}(a,b)$ into a
locally convex topological vector space \cite{Neto}, \cite{Treves}. A
sequence of functions $\psi _{n}\in F_{c}(a,b)$ converges to $\psi \in
F_{c}(a,b)$ with respect to this topology if there exists a compact set $%
K\subset (a,b)$ such that $\func{supp}\psi _{n}\subset K$ for all $n$ and $%
\psi _{n}\rightarrow \psi $ in $F(K)$ as $n\rightarrow +\infty $. Let $%
(F_{c}(a,b))^{\prime }$ be the dual space of $F_{c}(a,b)$, consisting of all
continuous functionals $u:F_{c}(a,b)\rightarrow \mathbb{C}$. The value of a
functional $u$ on $\psi \in $ $F_{c}(a,b)$ is denoted by $\left\langle
u,\psi \right\rangle $ and continuity means that $\left\langle u,\psi
_{n}\right\rangle \rightarrow \left\langle u,\psi \right\rangle $ provided
that $\psi _{n}\rightarrow \psi $ in $F_{c}(a,b)$ as $n\rightarrow \infty $.
Convergence in $(F_{c}(a,b))^{\prime }$ is defined to be pointwise
convergence. Obviously, for a smaller set of $F_{c}(a,b)$, the set $%
(F_{c}(a,b))^{\prime }$ will be bigger.

The space $C_{c}^{\infty }(a,b)$, called the space of test functions, plays
an important role in the theory of distributions. As defined above, $\phi
_{n}\rightarrow \phi $ in $C_{c}^{\infty }(a,b)$ if there exists a compact
set $K\subset (a,b)$ such that $\func{supp}\phi _{n}\subset K$ for all $n$
and $\frac{d^{m}\phi _{n}}{dx^{m}}\rightarrow \frac{d^{m}\phi }{dx^{m}}$
uniformly on $K$ as $n\rightarrow \infty $ for each $m\in \mathbb{N}_{0}$. A
distribution is an element from the dual of $C_{c}^{\infty }(a,b)$, which
from now on we denote by $\mathcal{D}^{\prime }(a,b)$, i.e., $\mathcal{D}%
^{\prime }(a,b)\equiv (C_{c}^{\infty }(a,b))^{\prime }$. A distribution $u$
is called regular if there exists an $f\in L_{loc}^{1}(a,b)$ such that $%
\left\langle u,\phi \right\rangle =\tint_{a}^{b}f(x)\phi (x)dx$ for all $%
\phi \in C_{c}^{\infty }(a,b)$. We usually identify a regular distribution $%
u $ with the corresponding function $f$, $u\equiv f$.

\begin{definition}
Let $T$ and $T^{\square }$ be linear operators from $C_{c}^{\infty }(a,b)$
to $L_{loc}^{1}(a,b)$ such that $\int_{a}^{b}(T\psi )\phi
dx=\int_{a}^{b}\psi (T^{\square }\phi )dx$ for all $\psi ,\phi \in
C_{c}^{\infty }(a,b)$. The operator $T^{\square }$, if it exists, is called
the transpose of $T$.
\end{definition}

The most common way to extend linear operations from functions to
distributions is the following.

\begin{proposition}
\label{PropLext}Let $T$ be a linear operator defined on $C_{c}^{\infty
}(a,b) $. If $T^{\square }:C_{c}^{\infty }(a,b)\rightarrow C_{c}^{\infty
}(a,b)$ is continuous, then the operator $T$ can be extended to a continuous
linear operator from $\mathcal{D}^{\prime }(a,b)$ to itself by the formula 
\begin{equation*}
\left\langle Tu,\phi \right\rangle :=\left\langle u,T^{\square }\phi
\right\rangle \quad ,u\in \mathcal{D}^{\prime }(a,b),\phi \in C_{c}^{\infty
}(a,b).
\end{equation*}
\end{proposition}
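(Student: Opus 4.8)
The plan is to show that the formula $\langle Tu, \phi\rangle := \langle u, T^\square \phi\rangle$ gives a well-defined continuous operator from $\mathcal{D}'(a,b)$ to itself. First I would verify that the right-hand side makes sense: for fixed $u \in \mathcal{D}'(a,b)$ and $\phi \in C_c^\infty(a,b)$, since $T^\square \phi \in C_c^\infty(a,b)$ by hypothesis, the pairing $\langle u, T^\square \phi\rangle$ is defined. So the formula assigns a number to each test function $\phi$.

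\medskip

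Next I would check that $\phi \mapsto \langle u, T^\square \phi\rangle$ is itself a distribution, i.e. an element of $\mathcal{D}'(a,b)$. Linearity in $\phi$ is immediate from the linearity of $T^\square$ and of the pairing $\langle u, \cdot\rangle$. For continuity, suppose $\phi_n \to \phi$ in $C_c^\infty(a,b)$. Since $T^\square$ is continuous on $C_c^\infty(a,b)$, we get $T^\square \phi_n \to T^\square \phi$ in $C_c^\infty(a,b)$, and then since $u$ is a distribution (hence continuous on $C_c^\infty(a,b)$), it follows that $\langle u, T^\square \phi_n\rangle \to \langle u, T^\square \phi\rangle$. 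Thus $Tu$ as defined is continuous on test functions, so $Tu \in \mathcal{D}'(a,b)$.

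\medskip

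I would then confirm that $T$ is linear as a map $\mathcal{D}'(a,b) \to \mathcal{D}'(a,b)$; this follows directly from the linearity of the pairing in its first argument. Finally, for continuity of $T$ on $\mathcal{D}'(a,b)$, recall that convergence in $\mathcal{D}'(a,b)$ is pointwise (as stated in the preliminaries): if $u_n \to u$ in $\mathcal{D}'(a,b)$, then $\langle u_n, \psi\rangle \to \langle u, \psi\rangle$ for every $\psi \in C_c^\infty(a,b)$. Applying this with $\psi = T^\square \phi$ gives $\langle Tu_n, \phi\rangle = \langle u_n, T^\square \phi\rangle \to \langle u, T^\square \phi\rangle = \langle Tu, \phi\rangle$ for every $\phi$, which is exactly pointwise convergence $Tu_n \to Tu$ in $\mathcal{D}'(a,b)$.

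\medskip

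The last point to address is that this extension genuinely agrees with the original $T$ on the functions it was already defined on, i.e. that it is an extension and not merely a new operator. For a regular distribution $u \equiv f$ with $f$ in the domain of $T$, one uses the defining property of the transpose, $\int_a^b (T\psi)\phi\,dx = \int_a^b \psi (T^\square \phi)\,dx$, to match $\langle Tu, \phi\rangle$ with the regular distribution associated to $Tf$. This is the step requiring the most care, since it relies on interpreting $T$ applied to a regular distribution through the transpose identity rather than through the original definition of $T$; none of the steps here is deep, however, as the result is essentially the standard duality-extension argument, and the real content has been packaged into the hypothesis that $T^\square$ maps $C_c^\infty(a,b)$ continuously into itself.
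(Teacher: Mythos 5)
Your proposal is correct. It is worth noting, though, that the paper does not actually carry out this argument: its entire proof is a citation, ``this is a special case of \cite[Th.\ 3.8]{Grubb}.'' What you have written is, in effect, the standard duality-extension argument that the cited theorem packages, so you and the paper reach the same result by the same underlying mechanism, but your version is self-contained. Two points in your write-up deserve a word of care. First, when you verify that $\phi \mapsto \langle u, T^{\square}\phi\rangle$ is a distribution, you argue via sequences; on $C_{c}^{\infty}(a,b)$ with the inductive limit topology, sequential continuity of a linear functional is indeed equivalent to continuity (and, more to the point, the paper itself \emph{defines} continuity on $F_{c}(a,b)$ and convergence in the dual sequentially, so your argument is exactly aligned with the paper's conventions). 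Second, your final step --- agreement of the extension with the original $T$ on regular distributions $u \equiv f$ with $f \in C_{c}^{\infty}(a,b)$ --- which you flag as the delicate one, closes without difficulty: the paper's definition of the transpose already guarantees $T\psi \in L_{loc}^{1}(a,b)$, so $Tf$ determines a regular distribution, and the identity $\int_{a}^{b}(Tf)\phi\,dx = \int_{a}^{b} f\,(T^{\square}\phi)\,dx = \langle f, T^{\square}\phi\rangle$ shows the two definitions coincide. In short: nothing is missing; your proof supplies the content that the paper delegates to Grubb, which is arguably a service to the reader given that the proposition is invoked repeatedly later (for the distributional derivative, for $P_{\pm}$, $AP_{+}$, and for $\mathbf{T}$ itself in Proposition \ref{PropTdist}).
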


\begin{proof}
This is a special case of \cite[Th. 3.8]{Grubb}.
\end{proof}

\bigskip

As $\frac{d}{dx}^{\square }=-\frac{d}{dx}$ is continuous on $C_{c}^{\infty
}(a,b)$, the derivative of $u\in \mathcal{D}^{\prime }(a,b)$ is defined by
employing the previous proposition: $\left\langle u^{\prime },\phi
\right\rangle :=-\left\langle u,\phi ^{\prime }\right\rangle $. It is again
an element of $\mathcal{D}^{\prime }(a,b)$.

The Sobolev space $W^{k,p}(a,b)$, $k\in \mathbb{N}$, $1\leq p\leq \infty $
consists of all functions $f\in L^{p}(a,b)$ having distributional
derivatives $f^{(n)}\in L^{p}(a,b)$ up to order $n\leq k$.

Denote the set of complex-valued absolutely continuous functions on $[a,b]$
by $AC[a,b]$. We recall that a function $V$ belongs to $AC[a,b]$ iff there
exists $v\in L^{1}(a,b)$ and $x_{0}\in \lbrack a,b]$ such that $%
V(x)=\int_{x_{0}}^{x}v(t)dt+V(x_{0})$,$\forall \,\ x\in \lbrack a,b]$ (see,
e.g., \cite{Neto}, \cite{Kolmogorov}). Thus, if $V\in AC[a,b]$, then $V\in
C[a,b]$ and $V$ is a.e. differentiable with $V^{\prime }(x)=v(x)$ on $%
\mathcal{(}a,b)$. Moreover, the usual derivative of a function from $AC[a,b]$
coincides a.e. with its distributional derivative \cite{Kolmogorov}, thus $%
AC[a,b]\subseteq W^{1,1}(a,b)$. In fact, the linear spaces $W^{1,1}(a,b)$
and $AC[a,b]$ coincide (see \cite{Brezis}, Theorem 8.2) in the following
sense: $u\in W^{1,1}(a,b)$ iff there exists $V\in AC[a,b]$ such that $u=V$
a.e. on $\mathcal{(}a,b)$.

Consider the differential equation%
\begin{equation}
u^{\prime \prime }+qu=h,  \label{SONE}
\end{equation}%
in the sense of distributions, where $q,h\in L^{1}(a,b)$ and the
distribution $qu$ is defined by $\left\langle qu,\varphi \right\rangle
:=\left\langle u,q\varphi \right\rangle $. Note that if $q\notin C^{\infty
}(a,b)$, then it might happen that $u\in \mathcal{D}^{\prime }(a,b)$ but $%
qu\notin \mathcal{D}^{\prime }(a,b)$. To overcome this situation, we first
will look for solutions of (\ref{SONE}) in $W^{1,1}(a,b)$, which are called
weak solutions.

\begin{proposition}
\label{Prop_weak}Let $q$ and $h$ be functions in $L^{1}(a,b)$. Then a
function $u\in W^{1,1}(a,b)$ is a weak solution of (\ref{SONE}) iff $u\in
W^{2,1}(a,b)$ and (\ref{SONE}) is satisfied a.e. on $\mathcal{(}a,b).$
\end{proposition}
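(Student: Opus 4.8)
The plan is to prove the biconditional by establishing both directions, where the key observation is that the term $qu$ must itself be a function in $L^1(a,b)$ before the distributional equation $u'' + qu = h$ can even be sensibly interpreted as an equation about distributional derivatives. First I would note that since $u \in W^{1,1}(a,b)$ coincides a.e. with an absolutely continuous (hence bounded on compact subintervals) function, and $q \in L^1(a,b)$, the product $qu$ is integrable; thus $qu \in L^1(a,b)$ and the distribution it defines via $\langle qu, \varphi\rangle = \langle u, q\varphi\rangle$ is regular. Consequently the equation (\ref{SONE}), read in $\mathcal{D}'(a,b)$, asserts precisely that the distributional second derivative $u''$ equals the $L^1$ function $h - qu$.

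For the forward direction, assume $u \in W^{1,1}(a,b)$ is a weak solution. Having shown $h - qu \in L^1(a,b)$, the equation says the distributional derivative of $u'$ is $h - qu \in L^1(a,b)$. Since $u \in W^{1,1}$ already gives $u' \in L^1(a,b)$, this shows $u'$ has a distributional derivative in $L^1$, so $u' \in W^{1,1}(a,b)$ and therefore $u \in W^{2,1}(a,b)$. I would then invoke the identification $W^{1,1}(a,b) = AC[a,b]$ recalled in the excerpt: $u'$ agrees a.e. with an absolutely continuous function whose pointwise (a.e.) derivative coincides with its distributional derivative $h - qu$. Hence $u'' = h - qu$ holds a.e., i.e.\ (\ref{SONE}) is satisfied pointwise almost everywhere.

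The converse is the easier direction. Suppose $u \in W^{2,1}(a,b)$ and (\ref{SONE}) holds a.e. Then $u, u', u''$ all lie in $L^1(a,b)$, and in particular $u \in W^{1,1}(a,b)$, so $u$ is an admissible candidate for a weak solution. To verify the weak (distributional) form, I would test against an arbitrary $\varphi \in C_c^\infty(a,b)$: multiply the a.e.\ identity $u'' + qu = h$ by $\varphi$ and integrate over $(a,b)$. Using that the classical and distributional derivatives of the $W^{2,1}$ function agree a.e., integration by parts transfers the derivatives onto $\varphi$ with vanishing boundary terms (since $\varphi$ has compact support), yielding exactly $\langle u'', \varphi\rangle + \langle qu, \varphi\rangle = \langle h, \varphi\rangle$ in the distributional sense. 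This confirms $u$ is a weak solution.

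The main obstacle, and the crux of the whole argument, is the very first step: establishing that $qu \in L^1(a,b)$ so that both the regularity $\langle qu,\varphi\rangle = \langle u, q\varphi\rangle$ is well defined and the distributional equation can be bootstrapped into a statement about $L^1$ derivatives. This rests on the local boundedness of $W^{1,1}$ functions (via their absolutely continuous representatives) combined with the integrability of $q$; without it the product $qu$ need not be a regular distribution at all, which is precisely the pathology flagged in the paragraph preceding the proposition. Once $qu \in L^1$ is secured, the rest is a routine application of the $W^{1,1} = AC$ identification and integration by parts against test functions.
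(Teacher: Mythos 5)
Your proof is correct, but note that the paper itself does not prove Proposition \ref{Prop_weak}: its ``proof'' consists of the citation ``See \cite{BCK}'', so there is no internal argument to compare against, and your proposal supplies the standard self-contained bootstrap that the citation hides. Your structure is the right one: on the bounded interval, $W^{1,1}(a,b)=AC[a,b]$ makes $u$ (after modification on a null set) continuous and bounded on $[a,b]$, so $qu\in L^{1}(a,b)$ and the weak formulation says precisely that the distributional derivative of $u'$ is the $L^{1}$ function $h-qu$; this upgrades $u$ to $W^{2,1}(a,b)$, and applying the $AC$ identification to $u'$ converts the distributional identity into the a.e.\ pointwise one. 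Two cosmetic remarks. First, your parenthetical ``bounded on compact subintervals'' is weaker than what your next clause actually uses: to get $qu\in L^{1}(a,b)$ rather than merely $qu\in L^{1}_{loc}(a,b)$ you need boundedness of $u$ on all of $[a,b]$, which is exactly what the $AC[a,b]$ representative provides here because $(a,b)$ is bounded; this distinction would become a genuine issue on an unbounded interval or with $q$ only locally integrable, so it is worth stating precisely. Second, in the converse direction the integration by parts can be dispensed with: for $u\in W^{2,1}(a,b)$ the distributional second derivative \emph{is} by definition the $L^{1}$ function $u''$, so the a.e.\ identity $u''+qu=h$ between $L^{1}$ functions is already the distributional identity; your test-function computation is not wrong, but it re-derives the compatibility of classical and distributional derivatives that the Sobolev-space definition grants for free. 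In sum, the argument is complete and correct, and its value relative to the paper is that it makes explicit the one nontrivial point --- $qu\in L^{1}(a,b)$ for $u\in W^{1,1}(a,b)$ --- on which the equivalence rests.
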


\begin{proof}
See \cite{BCK}.
\end{proof}

The last result reveals the density of polynomials in $W^{2,1}(a,b)$.

\begin{proposition}
\label{pndenseDl}Let $u\in W^{2,1}(a,b)$. Then there exists a sequence of
polynomials $P_{n}$ such that $P_{n}\rightarrow u,$ $P_{n}^{\prime
}\rightarrow u^{\prime }$ uniformly on $[a,b]$ and $P_{n}^{\prime \prime
}\rightarrow u^{\prime \prime }$ in $L^{1}(a,b)$, as $n\rightarrow \infty $.
\end{proposition}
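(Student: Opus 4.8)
The plan is to reduce everything to the approximation of $u''$ in $L^{1}(a,b)$ and then to recover $u'$ and $u$ by integration, exploiting that the only $L^{1}$-type (rather than uniform) requirement concerns the second derivative. First I would record the structural consequences of $u\in W^{2,1}(a,b)$: since $W^{1,1}(a,b)=AC[a,b]$ as recalled above, both $u$ and $u'$ are absolutely continuous on $[a,b]$, and the fundamental theorem of calculus gives $u'(x)=u'(a)+\int_{a}^{x}u''(t)\,dt$ and $u(x)=u(a)+\int_{a}^{x}u'(s)\,ds$ for all $x\in[a,b]$.

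Next I would invoke the density of polynomials in $L^{1}(a,b)$ to choose a sequence of polynomials $Q_{n}$ with $\|u''-Q_{n}\|_{L^{1}(a,b)}\to 0$, and then define $P_{n}$ to be the double antiderivative of $Q_{n}$ carrying the initial data of $u$ at the endpoint $a$:
\[
P_{n}(x)=u(a)+u'(a)(x-a)+\int_{a}^{x}\!\!\int_{a}^{s}Q_{n}(t)\,dt\,ds.
\]
Each $P_{n}$ is a polynomial, one has $P_{n}''=Q_{n}\to u''$ in $L^{1}(a,b)$ by construction, and $P_{n}(a)=u(a)$, $P_{n}'(a)=u'(a)$. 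Subtracting the representations above from the corresponding ones for $P_{n}$ gives $u'(x)-P_{n}'(x)=\int_{a}^{x}(u''-Q_{n})\,dt$ and $u(x)-P_{n}(x)=\int_{a}^{x}(u'-P_{n}')\,ds$, whence the uniform estimates
\[
\|u'-P_{n}'\|_{\infty}\le \|u''-Q_{n}\|_{L^{1}(a,b)},\qquad
\|u-P_{n}\|_{\infty}\le (b-a)\,\|u''-Q_{n}\|_{L^{1}(a,b)},
\]
both of which tend to $0$. This delivers the three required convergences simultaneously.

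The only genuine point to justify is the density of polynomials in $L^{1}(a,b)$, and I expect this to be the main (though still routine) obstacle, since $u''$ need be neither bounded nor continuous, so one cannot approximate it uniformly by polynomials directly. I would handle it in two steps: first approximate $u''$ in $L^{1}(a,b)$ by a continuous function $g$, using that continuous functions are dense in $L^{1}$ on a bounded interval, and then approximate $g$ uniformly on $[a,b]$ by a polynomial via the Weierstrass theorem; uniform convergence on the bounded interval $[a,b]$ implies $L^{1}(a,b)$ convergence, and a triangle-inequality argument produces the desired $Q_{n}$. Everything else is the elementary integral bookkeeping indicated above.
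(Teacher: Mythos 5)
Your proof is correct and takes essentially the same route as the paper's: choose polynomials $Q_{n}\to u''$ in $L^{1}(a,b)$ and define $P_{n}$ as the double antiderivative of $Q_{n}$ matching the initial data of $u$ (the paper anchors at $0$, you at $a$ --- an immaterial difference). The uniform estimates for $P_{n}\to u$, $P_{n}'\to u'$ and the two-step justification of polynomial density in $L^{1}$ (continuous functions plus Weierstrass) that you spell out are left implicit in the paper but are exactly the standard arguments intended.
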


\begin{proof}
Let $u\in W^{2,1}(a,b)$. Then $u^{\prime \prime }\in L^{1}(a,b)$, and
because polynomials are dense in $L^{1}(a,b)$ there exists a sequence of
polynomials $Q_{n}$ such that%
\begin{equation}
Q_{n}\rightarrow u^{\prime \prime }\text{ in }L^{1}(a,b)\text{, as }%
n\rightarrow \infty .  \label{polconv}
\end{equation}%
Integrating (\ref{polconv}) twice we see that the sequence of polynomials $%
P_{n}(x):=u(0)+u^{\prime }(0)x+\int_{0}^{x}\int_{0}^{t}Q_{n}(s)\,ds\,dt$
fulfills the conclusion of the proposition.
\end{proof}

\section{s-transmutations and $L$-bases. First results}

Consider the one-dimensional Schr\"{o}dinger operator%
\begin{equation}
L=\frac{d^{2}}{dx^{2}}-q(x),  \label{SchOp}
\end{equation}%
with domain of definition $D_{L}=W^{2,1}(a,b)$, where $q\in L^{1}(a,b).$

\begin{definition}
A linear map $T$ is called a transmutation operator on $\chi \subseteq D_{L}$
if for every $u\in \chi $ we have $u^{\prime \prime }\in D_{T}$, $Tu\in $ $%
D_{L}$ and the following equality holds%
\begin{equation}
\left( \frac{d^{2}}{dx^{2}}-q\right) Tu=T\frac{d^{2}}{dx^{2}}u.
\label{TransOp}
\end{equation}
\end{definition}

Let $\mathcal{P}(\mathbb{R})$ denote the linear space of polynomials. We
will be mainly interested in operators satisfying the transmutation property
(\ref{TransOp}) on sets $\chi \supseteq \mathcal{P}(\mathbb{R})$. Let $T$ be
a transmutation operator and define $\varphi _{k}:=T\left[ x^{k}\right] $, $%
k\in \mathbb{N}_{0}$. From the transmutation property (\ref{TransOp}), we
obtain%
\begin{equation*}
\left( \frac{d^{2}}{dx^{2}}-q\right) \varphi _{k}=\left( \frac{d^{2}}{dx^{2}}%
-q\right) Tx^{k}=T\frac{d^{2}}{dx^{2}}x^{k}.
\end{equation*}%
Thus,%
\begin{equation}
\left( \frac{d^{2}}{dx^{2}}-q\right) \varphi _{k}=0,\text{\quad }k=0,1;
\label{L-base0,1}
\end{equation}%
and%
\begin{equation}
\left( \frac{d^{2}}{dx^{2}}-q\right) \varphi _{k}=k(k-1)\varphi _{k-2},\text{%
\quad }k\geq 2.  \label{L-baseplus2}
\end{equation}%
As in \cite{Fage}, we define the following:

\begin{definition}
\label{DefLBase} A system of functions $\left\{ \varphi _{k}\right\} $, $%
k\in \mathbb{N}_{0}$, satisfying (\ref{L-base0,1}) and (\ref{L-baseplus2})
in the sense of weak solutions, is called an $L$-base.
\end{definition}

We have shown that if $T$ is a transmutation operator on $\mathcal{P}(%
\mathbb{R})$, then $\varphi _{k}:=T\left[ x^{k}\right] $ is an $L$-base.
Moreover, given an $L$-base $\left\{ \varphi _{k}\right\} $, there exists a
transmutation operator $T_{\varphi }$ on $\mathcal{P}(\mathbb{R})$ such that 
$\varphi _{k}=T_{\varphi }\left[ x^{k}\right] $ for all $k\in \mathbb{N}_{0} 
$. To see this, define $T_{\varphi }$ on powers to be $T_{\varphi }\left[
x^{k}\right] :=\varphi _{k}$ and extend it to $\mathcal{P}(\mathbb{R}) $ by
linearity. It follows from Proposition \ref{Prop_weak} that each member of
an $L$-base is a function from the space $W^{2,1}(a,b)$, hence $T_{\varphi }$
maps $\mathcal{P}(\mathbb{R})$ into $W^{2,1}(a,b)$. Also, if $%
P=\sum\limits_{k=0}^{m}\alpha _{k}x^{k}$, from (\ref{L-base0,1}) and (\ref%
{L-baseplus2}) we get 
\begin{equation*}
(\frac{d^{2}}{dx^{2}}-q)T_{\varphi }P=\sum\limits_{k=0}^{m}\alpha _{k}(\frac{%
d^{2}}{dx^{2}}-q)\varphi _{k}=\sum\limits_{k=2}^{m}\alpha _{k}k(k-1)\varphi
_{k-2}=T_{\varphi }P^{\prime \prime }.
\end{equation*}%
This proves the existence of a transmutation operator on $\mathcal{P}(%
\mathbb{R})$ satisfying $T_{\varphi }\left[ x^{k}\right] =\varphi _{k}$.

However, regarding possible applications, it is important to establish the
existence of transmutations on larger domains than $\mathcal{P}(\mathbb{R})$%
, for example on $W^{2,1}(a,b),$ and investigate when they are bounded or
invertible. Looking for the existence of such operators, we will work with a
special class of $L$-bases, which we introduce below. We will assume from
now on, without loss of generality, that $0\in \lbrack a,b]$.

\begin{definition}
\label{standard}A standard $L$-base (or, simply, an s-$L$-base) is an $L$%
-base $\left\{ \varphi _{k}\right\} $ such that $\varphi _{k}(0)=\varphi
_{k}^{\prime }(0)=0$ for all $k\geq 2$. A transmutation operator is called
standard if $\varphi _{k}:=T\left[ x^{k}\right] $ is an s-$L$-base.
\end{definition}

\begin{example}
The system of powers $\left\{ x^{k}\right\} $, $k\in \mathbb{N}_{0},$ is an
s-$L$-base corresponding to the operator $L=\frac{d^{2}}{dx^{2}}$. The shift
operator $Eu(x)=u(x-x_{0})$, $x_{0}\neq 0$ is a transmutation operator (on $%
W^{2,1}(\mathbb{R})$) corresponding to $L=\frac{d^{2}}{dx^{2}}$ but it is
not an s-transmutation operator.
\end{example}

\begin{remark}
\label{Remark s-l-base}We stress that an s-$L$-base $\left\{ \varphi
_{k}\right\} $ is completely determined by its first two elements $\varphi
_{0}$ and $\varphi _{1}$. For instance, from Proposition \ref{Prop_weak} and
by the variation of parameters formula, we see that the unique weak solution
of (\ref{L-baseplus2}) satisfying $\varphi _{k}(0)=\varphi _{k}^{\prime
}(0)=0$ is given by%
\begin{equation}
\varphi _{k}(x)=k(k-1)\dint\limits_{0}^{x}G(x,s)\varphi _{k-2}(s)ds,\text{%
\quad }k\geq 2,  \label{slbaseconstk>2}
\end{equation}%
where%
\begin{equation}
G(x,s)=\dfrac{\psi _{0}(s)\psi _{1}(x)-\psi _{0}(x)\psi _{1}(s)}{W(\psi
_{0},\psi _{1})},  \label{GreenFu}
\end{equation}%
$W(\psi _{0},\psi _{1})=\psi _{0}(x)\psi _{1}^{\prime }(x)-\psi _{0}^{\prime
}(x)\psi _{1}(x)$, and $\psi _{0},\psi _{1}$ are any two linearly
independent weak solutions of (\ref{L-base0,1}). Note that $G(x,s)$ does not
depend on the choice of $\psi _{0}$ and $\psi _{1}$.
\end{remark}

\begin{theorem}
\label{Thmapping}Suppose $\left\{ \varphi _{k}\right\} $ is an s-$L$-base
and suppose $T:L^{1}(a,b)\rightarrow L^{1}(a,b)$ is a bounded linear
operator such that $T[x^{k}]=\varphi _{k}$, $k\in \mathbb{N}_{0}$. Then $T$
is an s-transmutation operator on $W^{2,1}(a,b)$.
\end{theorem}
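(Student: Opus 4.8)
The plan is to establish the transmutation relation on all of $W^{2,1}(a,b)$ by approximating an arbitrary $u\in W^{2,1}(a,b)$ with polynomials and passing to the limit, using the underlying ODE structure (rather than the weak formulation directly) to control the limit. First I would observe that, since $T$ is linear and $T[x^{k}]=\varphi_{k}$, the restriction of $T$ to $\mathcal{P}(\mathbb{R})$ coincides with the operator $T_{\varphi}$ already discussed; hence the relation $(\frac{d^{2}}{dx^{2}}-q)TP=TP''$ holds for every polynomial $P$, and since $TP$ is a finite combination of members of the s-$L$-base (each in $W^{2,1}(a,b)$), Proposition \ref{Prop_weak} shows this relation holds a.e. Given $u\in W^{2,1}(a,b)$, note that $u''\in L^{1}(a,b)=D_{T}$ automatically, and invoke Proposition \ref{pndenseDl} to produce polynomials $P_{n}$ with $P_{n}\to u$, $P_{n}'\to u'$ uniformly on $[a,b]$ and $P_{n}''\to u''$ in $L^{1}(a,b)$.

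Next I would set $v_{n}:=TP_{n}$ and record that each $v_{n}\in W^{2,1}(a,b)$ solves $v_{n}''-qv_{n}=TP_{n}''$ a.e., with initial data read off from the s-$L$-base property: since $\varphi_{k}(0)=\varphi_{k}'(0)=0$ for $k\geq 2$, one has $v_{n}(0)=P_{n}(0)\varphi_{0}(0)+P_{n}'(0)\varphi_{1}(0)$ and $v_{n}'(0)=P_{n}(0)\varphi_{0}'(0)+P_{n}'(0)\varphi_{1}'(0)$. Using the variation of parameters representation behind Remark \ref{Remark s-l-base}, I would write $v_{n}$ as the sum of the homogeneous solution carrying these initial data and the particular solution $\int_{0}^{x}G(x,s)\,TP_{n}''(s)\,ds$, with $G$ the Green kernel (\ref{GreenFu}).

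The crux is then the passage to the limit. The initial data $v_{n}(0),v_{n}'(0)$ converge because $P_{n}(0)\to u(0)$ and $P_{n}'(0)\to u'(0)$, so the homogeneous part converges uniformly. Since $G$ is continuous, hence bounded, on $[a,b]\times[a,b]$ and $TP_{n}''\to Tu''$ in $L^{1}(a,b)$ by the boundedness of $T$, the particular solution also converges uniformly. Thus $v_{n}\to v$ uniformly, where $v$ is given by the limiting representation; by Proposition \ref{Prop_weak}, $v\in W^{2,1}(a,b)$ and $v''-qv=Tu''$ a.e. Finally, since $T$ is bounded on $L^{1}(a,b)$ and $P_{n}\to u$ in $L^{1}(a,b)$, we also have $v_{n}=TP_{n}\to Tu$ in $L^{1}(a,b)$; uniqueness of the $L^{1}$ limit forces $Tu=v$. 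Hence $Tu\in W^{2,1}(a,b)=D_{L}$ and $(\frac{d^{2}}{dx^{2}}-q)Tu=Tu''=T\frac{d^{2}}{dx^{2}}u$, which is exactly (\ref{TransOp}); as $T[x^{k}]=\varphi_{k}$ is an s-$L$-base by hypothesis, $T$ is standard in the sense of Definition \ref{standard}.

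The step I expect to be the genuine obstacle is precisely this limit. Working only in $L^{1}$, one cannot pass to the limit directly in the weak form of the transmutation identity, because the term $\int TP_{n}\cdot q\phi\,dx$ pairs two merely $L^{1}$ factors and need not converge. Routing the argument through the Green's function representation is what upgrades the $L^{1}$ convergence of the second derivatives $TP_{n}''$ to uniform convergence of $v_{n}$ itself; here the boundedness of $G$ controls the particular part, while the s-$L$-base normalization $\varphi_{k}(0)=\varphi_{k}'(0)=0$ for $k\geq 2$ is what keeps the initial data, and hence the homogeneous part, under control.
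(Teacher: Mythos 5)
Your proof is correct and takes essentially the same route as the paper: both establish the Green's-function representation $T[u]=u(0)\varphi_{0}+u^{\prime}(0)\varphi_{1}+\int_{0}^{x}G(x,s)T[u^{\prime\prime}](s)\,ds$ first on polynomials (you via the ODE satisfied by $v_{n}=TP_{n}$ and uniqueness of the initial value problem, the paper by direct computation from (\ref{slbaseconstk>2})) and then pass to the limit using Proposition \ref{pndenseDl} together with the $L^{1}$-boundedness of $T$, the bounded kernel $G$ upgrading $L^{1}$-convergence of $TP_{n}^{\prime\prime}$ to uniform convergence of the integral term. Your final steps (identifying $Tu$ with the limit $v$ by uniqueness of $L^{1}$ limits, and invoking Proposition \ref{Prop_weak} to get $Tu\in W^{2,1}(a,b)$ and the equation a.e.) merely spell out what the paper leaves implicit when it applies $L$ to both sides of (\ref{TransOpGrenn}).
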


\begin{proof}
Let us prove that%
\begin{equation}
T[u]=u(0)\varphi _{0}(x)+u^{\prime }(0)\varphi _{1}(x)+\int_{0}^{x}G(x,s)T 
\left[ u^{\prime \prime }\right] (s)ds  \label{TransOpGrenn}
\end{equation}%
for any $u\in W^{2,1}(a,b)$, where $G(x,s)$ is given by (\ref{GreenFu}).
Then (\ref{TransOp}) will follow from the application of $L=\frac{d^{2}}{%
dx^{2}}+q(x)$ to both sides of (\ref{TransOpGrenn}).

As $T$ is linear and $T[x^{k}]=\varphi _{k}$ is an s-$L$-base, the
application of $T$ to $P=\sum\limits_{k=0}^{m}\alpha _{k}x^{k}$ gives%
\begin{eqnarray*}
T\left[ P\right] &=&\dsum\limits_{k=0}^{m}\alpha
_{k}T_{h}[x^{k}]=\dsum\limits_{k=0}^{m}\alpha _{k}\varphi _{k}(x) \\
&=&\alpha _{0}\varphi _{0}(x)+\alpha _{1}\varphi
_{1}(x)+\int_{0}^{x}G(x,s)\dsum\limits_{k=2}^{m}k(k-1)\alpha _{k}\varphi
_{k-2}(s)ds \\
&=&P(0)\varphi _{0}(x)+P^{\prime }(0)\varphi _{1}(x)+\int_{0}^{x}G(x,s)T 
\left[ P^{\prime \prime }\right] (s)ds,
\end{eqnarray*}%
where we have used (\ref{slbaseconstk>2}). According to Proposition \ref%
{pndenseDl}, given $u\in W^{1,2}(a,b)$ there exists a sequence of
polynomials $P_{n}$ such that $P_{n}\rightarrow u$, $P_{n}^{\prime
}\rightarrow u^{\prime }$ uniformly on $[a,b]$ and $P_{n}^{\prime \prime
}\rightarrow u^{\prime \prime }$ in $L_{1}(a,b)$, as $n\rightarrow \infty $.
Then (\ref{TransOpGrenn}) follows from a simple limiting procedure using the
continuity of $T$ and the above relations,%
\begin{eqnarray*}
T\left[ u\right] &=&\underset{n\rightarrow \infty }{\lim }T\left[ P_{n}%
\right] =\underset{n\rightarrow \infty }{\lim }\left[ P_{n}(0)\varphi
_{0}(x)+P_{n}^{\prime }(0)\varphi _{0}(x)+\int_{0}^{x}G(x,s)T\left[
P_{n}^{\prime \prime }\right] (s)ds\right] \\
&=&u(0)\varphi _{0}(x)+u^{\prime }(0)\varphi _{1}(x)+\int_{0}^{x}G(x,s)T 
\left[ u^{\prime \prime }\right] (s)ds.
\end{eqnarray*}
\end{proof}

\begin{remark}
\label{ReThmapping} It is easy to see that the above theorem is still true
if we replace the boundedness of $T$ on $L^{1}(a,b)$ by the boundedness from 
$W^{1,1}(a,b)$ to $L^{1}(a,b)$. However, in this case the transmutation
property (\ref{TransOp}) holds on $W^{3,1}(a,b)$, instead of on $%
W^{2,1}(a,b) $.
\end{remark}

The following proposition establishes conditions under which the limit of a
sequence of s-transmutation operators is an s-transmutation operator as well.

\begin{proposition}
\label{PropConvTn}Let $q_{n}$ be a convergent sequence in $L^{1}(a,b)$ with
limit $q$. Let $T$ be a bounded operator on $L^{1}(a,b)$ and $T_{n}$ be a
sequence of s-transmutation operators corresponding to $L_{n}=\frac{d^{2}}{%
dx^{2}}+q_{n}$. If for each $k=0,1,2..$ 
\begin{equation*}
T_{n}\left[ x^{k}\right] \rightarrow T\left[ x^{k}\right] \text{ uniformly
on }[a,b]\text{, as }n\rightarrow \infty ,
\end{equation*}%
then $T$ is an s-transmutation operator on $W^{2,1}(a,b)$ corresponding to $%
L=\frac{d^{2}}{dx^{2}}+q$.
\end{proposition}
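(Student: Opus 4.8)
The plan is to reduce everything to Theorem \ref{Thmapping}. Since $T$ is already assumed to be bounded on $L^{1}(a,b)$, and setting $\varphi _{k}:=T[x^{k}]$, it suffices to prove that the system $\left\{ \varphi _{k}\right\} $ is an s-$L$-base for $L=\frac{d^{2}}{dx^{2}}-q$; Theorem \ref{Thmapping} then immediately yields that $T$ is an s-transmutation on $W^{2,1}(a,b)$ corresponding to $L$. Writing $\varphi _{k}^{(n)}:=T_{n}[x^{k}]$, the hypothesis that each $T_{n}$ is an s-transmutation for $L_{n}$ means precisely that $\left\{ \varphi _{k}^{(n)}\right\} _{k}$ is an s-$L$-base for $L_{n}$, and by assumption $\varphi _{k}^{(n)}\rightarrow \varphi _{k}$ uniformly on $[a,b]$ for every $k$. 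So the whole task is to pass to the limit in $n$ and recover the defining relations of an s-$L$-base.

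First I would recast those relations in integral form. By Proposition \ref{Prop_weak} each $\varphi _{k}^{(n)}$ belongs to $W^{2,1}(a,b)$ and satisfies its equation a.e.; integrating twice from $0$ gives, for all $k$,
\begin{equation*}
\varphi _{k}^{(n)}(x)=\varphi _{k}^{(n)}(0)+(\varphi _{k}^{(n)})^{\prime }(0)\,x+\int_{0}^{x}(x-s)\big(q_{n}(s)\varphi _{k}^{(n)}(s)+k(k-1)\varphi _{k-2}^{(n)}(s)\big)\,ds,
\end{equation*}
where the source term is absent for $k\in \{0,1\}$ and where $\varphi _{k}^{(n)}(0)=(\varphi _{k}^{(n)})^{\prime }(0)=0$ for $k\geq 2$ by the standard normalization. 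The key analytic point is that $q_{n}\varphi _{k}^{(n)}\rightarrow q\varphi _{k}$ in $L^{1}(a,b)$: writing $q_{n}\varphi _{k}^{(n)}-q\varphi _{k}=q_{n}(\varphi _{k}^{(n)}-\varphi _{k})+(q_{n}-q)\varphi _{k}$ and using that $\{\Vert q_{n}\Vert _{L^{1}}\}$ is bounded (being convergent) together with the uniform boundedness of the $\varphi _{k}^{(n)}$ settles this. It follows that the integral terms converge uniformly in $x$, and likewise $\int_{0}^{x}(x-s)\varphi _{k-2}^{(n)}(s)\,ds\rightarrow \int_{0}^{x}(x-s)\varphi _{k-2}(s)\,ds$.

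The one delicate step — and the main obstacle — is that the hypothesis controls only the values $\varphi _{k}^{(n)}$, not their derivatives, so convergence of the slopes $(\varphi _{k}^{(n)})^{\prime }(0)$ is not given for $k\in \{0,1\}$. I would recover it from the integral equation itself: choosing any fixed $x_{0}\in [a,b]$ with $x_{0}\neq 0$ (which exists since $0\in [a,b]$ and $a<b$) and solving the identity above for $(\varphi _{k}^{(n)})^{\prime }(0)$ shows, since every other term converges, that $(\varphi _{k}^{(n)})^{\prime }(0)$ tends to a finite limit $d_{k}$. Passing to the limit then gives $\varphi _{k}(x)=\varphi _{k}(0)+d_{k}x+\int_{0}^{x}(x-s)q(s)\varphi _{k}(s)\,ds$ for $k\in \{0,1\}$, and $\varphi _{k}(x)=\int_{0}^{x}(x-s)\big(q(s)\varphi _{k}(s)+k(k-1)\varphi _{k-2}(s)\big)\,ds$ for $k\geq 2$; in the latter case evaluating at $x=0$ and differentiating once shows $\varphi _{k}(0)=\varphi _{k}^{\prime }(0)=0$, so the standard normalization is inherited. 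Each limiting integral equation is equivalent, as in Proposition \ref{Prop_weak} and Remark \ref{Remark s-l-base}, to $\varphi _{k}\in W^{2,1}(a,b)$ being a weak solution of $L\varphi _{0}=L\varphi _{1}=0$ and $L\varphi _{k}=k(k-1)\varphi _{k-2}$ for $k\geq 2$. Hence $\left\{ \varphi _{k}\right\} $ is an s-$L$-base for $L$, and Theorem \ref{Thmapping} finishes the proof.
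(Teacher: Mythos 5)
Your proof is correct, and its skeleton coincides with the paper's: reduce the problem to showing that $\{\varphi_k\}$ is an s-$L$-base and invoke Theorem \ref{Thmapping}; for $k=0,1$ your twice-integrated identity is literally the paper's device (the paper introduces $r_n(x)=\varphi_{0,n}(x)-\int_0^x(x-t)q_n(t)\varphi_{0,n}(t)\,dt$, observes that $r_n$ is affine, and extracts convergence of $\varphi_{0,n}'(0)$ from the uniform convergence of $r_n$ --- the same slope-recovery you perform by solving the identity at a fixed $x_0\neq 0$). Where you genuinely deviate is the case $k\geq 2$: the paper instead uses the Green-function representation of Remark \ref{Remark s-l-base}, $\varphi_{k,n}(x)=k(k-1)\int_0^x G_n(x,s)\varphi_{k-2,n}(s)\,ds$, and passes to the limit via the asserted uniform convergence $G_n\to G$, a claim that silently rests on continuous dependence of solutions of $\psi''=q\psi$ on the potential in $L^1$. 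You avoid that auxiliary lemma altogether by using the same Volterra-type identity for every $k$, which makes the argument uniform in $k$, confines all the analysis to the single convergence $q_n\varphi_k^{(n)}\to q\varphi_k$ in $L^1$ (which you justify explicitly via the splitting $q_n(\varphi_k^{(n)}-\varphi_k)+(q_n-q)\varphi_k$, a step the paper only sketches with ``it is easy to see''), and yields the initial conditions $\varphi_k(0)=\varphi_k'(0)=0$ for $k\geq 2$ directly from the limiting integral equation. So your route is, if anything, more self-contained, at the modest cost of having to verify the $L^1$ convergence for every $k$ rather than only for $k=0,1$. One cosmetic remark: you work with $L=\frac{d^2}{dx^2}-q$ while the proposition's statement writes $L_n=\frac{d^2}{dx^2}+q_n$; this sign inconsistency is the paper's own, and its proof uses the same convention you do.
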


\begin{proof}
Write $\varphi _{k,n}(x)=T_{n}\left[ x^{k}\right] $ and $\varphi _{k}(x)=T%
\left[ x^{k}\right] $. To prove the theorem, it suffices to show that $%
\left\{ \varphi _{k}\right\} $ is an s-$L$-base. Indeed, using the
boundedness of $T$ together with the mapping property $T\left[ x^{k}\right]
=\varphi _{k}(x)$, we conclude from Theorem \ref{Thmapping} that $T$ is an
s-transmutation operator on $W^{2,1}(a,b).$

Let us start by proving that $L\left[ \varphi _{0}\right] =\varphi
_{0}^{\prime \prime }(x)-q(x)\varphi _{0}(x)=0$. Consider the functions $%
r_{n}(x)$ and $r(x)$ defined by%
\begin{equation}
r_{n}(x)=\varphi _{0,n}(x)-\int_{0}^{x}(x-t)q_{n}(t)\varphi _{0,n}(t)dt,
\label{intecqn}
\end{equation}%
\begin{equation}
r(x)=\varphi _{0}(x)-\int_{0}^{x}(x-t)q(t)\varphi _{0}(t)dt\text{.}
\label{intecq}
\end{equation}%
It is easy to see from (\ref{intecqn}) that $r^{\prime \prime }(x)=0$ iff $L%
\left[ \varphi _{0}\right] =0$. In this way, as $L_{n}\left[ \varphi _{0,n}%
\right] =\varphi _{0,n}^{\prime \prime }(x)-q_{n}(x)\varphi _{0,n}(x)=0$
from (\ref{intecqn}), we get $r_{n}^{\prime \prime }(x)=0$. Thus $%
r_{n}(x)=c_{1}+c_{2}x$, and taking into account the boundary values at $x=0$%
, we see that 
\begin{equation}
r_{n}(x)=\varphi _{0,n}(0)+\varphi _{0,n}^{^{\prime }}(0)x.  \label{rn}
\end{equation}%
Since, by assumption, $\varphi _{0,n}(x)=T_{n}[1]\rightarrow T[1]=\varphi
_{0}(x)$ uniformly on $[a,b]$ as $n\rightarrow \infty $, we can see from (%
\ref{intecqn}) and (\ref{intecq}) that $r_{n}(x)\rightarrow r(x)$ uniformly
on $[a,b]$. From this and (\ref{rn}) we must have that $\varphi
_{0,n}^{\prime }(0)\rightarrow \varphi _{0}^{\prime }(0)$ as $n\rightarrow
\infty $, and $r(x)=\varphi _{0}(0)+\varphi _{0}^{\prime }(0)x$. Thus $%
r^{\prime \prime }(x)=0$ and from (\ref{intecq}) we conclude that $L\left[
\varphi _{0}\right] =0$. In the same manner we can see that $L\left[ \varphi
_{1}\right] =0$.

Let $G(x,s)$ be the function in (\ref{GreenFu}) where $\psi _{0}$ and $\psi
_{1}$ are linearly independent solutions of $L\left[ \psi \right] =0$ and
denote by $G_{n}(x,s)$ the analogous function corresponding to the operator $%
L_{n}$. From the fact that $q_{n}\rightarrow q$ in $L^{1}(a,b)$ it is easy
to see that $G_{n}(x,s)\rightarrow G(x,s)$ uniformly on $[a,b]\times \lbrack
a,b]$ as $n\rightarrow \infty $. Hence, as $\varphi _{k,n}(x)\rightarrow
\varphi _{k}(x)$ uniformly, as $n\rightarrow \infty $, for $k\geq 2$ we
finally obtain%
\begin{eqnarray*}
\varphi _{k}(x) &=&\underset{n\rightarrow \infty }{\lim }\varphi
_{k,n}(x)=k(k-1)\underset{n\rightarrow \infty }{\lim }\dint%
\limits_{0}^{x}G_{n}(x,s)\varphi _{k-2,n}(s)ds \\
&=&k(k-1)\dint\limits_{0}^{x}G(x,s)\varphi _{k-2}(s)ds.
\end{eqnarray*}%
This shows that $\left\{ \varphi _{k}\right\} $ is an s-$L$-base. The
theorem is proved.
\end{proof}

\section{New s-transmutations\label{Sect4}}

In this section a method for constructing new s-transmutation operators on $%
\mathcal{P}(\mathbb{R})$ in terms of one single s-transmutation operator is
presented.

Let us introduce the operators 
\begin{equation}
P_{\pm }u(x)=\frac{u(x)\pm u(-x)}{2},\quad Au(x)=\int_{0}^{x}u(s)ds\text{,}
\label{Proye}
\end{equation}%
mapping $\mathcal{P}(\mathbb{R})$ into itself. Note that the $P_{\pm }$ are
pairwise projection operators, that is, $P_{+}^{2}=P_{+}$, $P_{-}^{2}=P_{-}$%
, $P_{+}+P_{-}=I$, and $P_{+}P_{-}=P_{-}P_{+}=0$.

\begin{proposition}
\label{PropP+-}The relations $\frac{d}{dx}P_{\pm }=P_{\mp }\frac{d}{dx}$ and 
$AP_{\pm }=P_{\mp }A$ hold on $\mathcal{P}(\mathbb{R})$.
\end{proposition}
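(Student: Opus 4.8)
The plan is to verify each of the two operator identities on the monomial basis $\{x^k\}$, since every polynomial is a finite linear combination of powers and all the operators involved ($P_\pm$, $\frac{d}{dx}$, $A$) are linear, so it suffices to check the relations on $x^k$ for each $k\in\mathbb{N}_0$. The key observation driving everything is the parity of monomials: $P_+$ annihilates odd powers and fixes even powers, while $P_-$ does the opposite. Concretely, $P_+[x^k]=x^k$ and $P_-[x^k]=0$ when $k$ is even, and $P_+[x^k]=0$, $P_-[x^k]=x^k$ when $k$ is odd. This is immediate from $P_\pm[x^k]=\tfrac12\bigl(x^k\pm(-x)^k\bigr)=\tfrac12\bigl(1\pm(-1)^k\bigr)x^k$.

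For the first identity $\frac{d}{dx}P_\pm=P_\mp\frac{d}{dx}$, the point is that differentiation flips parity: the derivative of an even power is odd and vice versa. First I would compute both sides on $x^k$. On the left, $\frac{d}{dx}P_\pm[x^k]=\tfrac12\bigl(1\pm(-1)^k\bigr)kx^{k-1}$. On the right, $P_\mp\frac{d}{dx}[x^k]=k\,P_\mp[x^{k-1}]=\tfrac{k}{2}\bigl(1\mp(-1)^{k-1}\bigr)x^{k-1}$. The two agree because $\pm(-1)^k=\mp(-1)^{k-1}$, so $\tfrac12(1\pm(-1)^k)=\tfrac12(1\mp(-1)^{k-1})$. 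The case $k=0$ is trivial since both sides vanish. Extending by linearity to all of $\mathcal{P}(\mathbb{R})$ completes this part.

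For the second identity $AP_\pm=P_\mp A$, the same parity-flipping phenomenon governs the antiderivative $A u(x)=\int_0^x u(s)\,ds$: integrating an even power yields an odd one and conversely, precisely because the lower limit is $0$. Computing on $x^k$, we have $A[x^k]=\tfrac{1}{k+1}x^{k+1}$, so $P_\mp A[x^k]=\tfrac{1}{k+1}P_\mp[x^{k+1}]=\tfrac{1}{2(k+1)}\bigl(1\mp(-1)^{k+1}\bigr)x^{k+1}$, while $AP_\pm[x^k]=\tfrac12\bigl(1\pm(-1)^k\bigr)A[x^k]=\tfrac{1}{2(k+1)}\bigl(1\pm(-1)^k\bigr)x^{k+1}$. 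These coincide by the same sign identity $\pm(-1)^k=\mp(-1)^{k+1}$ used above. Again linearity finishes the argument.

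I do not anticipate a genuine obstacle here, as the statement is elementary once reduced to monomials; the only thing to be careful about is bookkeeping of the $\pm/\mp$ signs and confirming that the $k=0$ (resp.\ the constant) edge cases cause no trouble. The mildly delicate point, if any, is to state cleanly why checking on $\{x^k\}$ alone suffices to prove an operator identity on all of $\mathcal{P}(\mathbb{R})$, namely that a linear operator on $\mathcal{P}(\mathbb{R})$ is determined by its action on the spanning set of monomials, so that two linear operators agreeing on every $x^k$ are equal.
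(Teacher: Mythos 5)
Your proof is correct; the paper itself offers no argument (its proof reads simply ``Straightforward''), so there is no conflict of method. That said, your reduction to monomials is slightly longer than the most direct route: for an arbitrary $u$ one has, by the chain rule,
\begin{equation*}
\frac{d}{dx}P_{\pm }u(x)=\frac{u^{\prime }(x)\mp u^{\prime }(-x)}{2}=P_{\mp }\left[ u^{\prime }\right] (x),
\end{equation*}
and, by the substitution $t=-s$ in $\int_{0}^{x}u(-s)\,ds=-\int_{0}^{-x}u(t)\,dt$,
\begin{equation*}
AP_{\pm }u(x)=\frac{(Au)(x)\mp (Au)(-x)}{2}=P_{\mp }\left[ Au\right] (x),
\end{equation*}
which proves both relations pointwise for any $u$ (in particular on $\mathcal{P}(\mathbb{R})$, and in fact on much larger spaces, which is relevant later when these operators act on $L_{loc}^{1}$ and on distributions). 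Your parity bookkeeping on $x^{k}$ is accurate, including the sign identities $\pm (-1)^{k}=\mp (-1)^{k-1}=\mp(-1)^{k+1}$ and the $k=0$ edge case, and your closing remark that a linear operator identity on $\mathcal{P}(\mathbb{R})$ follows from agreement on the spanning set $\{x^{k}\}$ is exactly the right justification; so the proposal is complete, just marginally less economical and less general than the direct computation.
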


\begin{proof}
Straightforward.
\end{proof}

\begin{proposition}
\label{TP+-}Let $T$ be an s-transmutation operator on $\mathcal{P}(\mathbb{R}%
)$. Then

\begin{description}
\item[$i)$] $T[1]=0$ if and only if $T=TP_{-}$ on $\mathcal{P}(\mathbb{R})$;

\item[$ii)$] $T[x]=0$ if and only if $T=TP_{+}$ on $\mathcal{P}(\mathbb{R})$;

\item[$iii)$] $T[1]=0$ and $T[x]=0$ if and only if $T\equiv 0$ on $\mathcal{P%
}(\mathbb{R}).$
\end{description}
\end{proposition}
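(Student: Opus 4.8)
The plan is to exploit the fact that, on the monomial basis, the projections $P_{\pm}$ act by parity selection: since $P_{+}[x^{k}]=\tfrac{1}{2}(x^{k}+(-x)^{k})$ equals $x^{k}$ for even $k$ and $0$ for odd $k$, while $P_{-}[x^{k}]$ equals $x^{k}$ for odd $k$ and $0$ for even $k$. Writing $\varphi_{k}=T[x^{k}]$ and using the linearity of $T$, the operator identity $T=TP_{-}$ on $\mathcal{P}(\mathbb{R})$ is therefore equivalent to the requirement that $T[x^{k}]=TP_{-}[x^{k}]$ for every $k$; for odd $k$ this holds automatically, and for even $k$ it reduces to $\varphi_{k}=T[0]=0$. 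Hence $T=TP_{-}$ holds if and only if $\varphi_{k}=0$ for all even $k$, and, by the same reasoning, $T=TP_{+}$ holds if and only if $\varphi_{k}=0$ for all odd $k$. This reduces the three statements to controlling the vanishing of the even- and odd-indexed members of the s-$L$-base.

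For the forward direction of $i)$ I would assume $\varphi_{0}=T[1]=0$ and propagate this through the recursion (\ref{slbaseconstk>2}) of Remark \ref{Remark s-l-base}. Since the Green function $G(x,s)$ there is independent of the index, the identity $\varphi_{k}(x)=k(k-1)\int_{0}^{x}G(x,s)\varphi_{k-2}(s)\,ds$ shows that $\varphi_{k-2}\equiv 0$ forces $\varphi_{k}\equiv 0$. An easy induction over even indices, started from $\varphi_{0}=0$, then yields $\varphi_{k}=0$ for all even $k$, which by the reduction above is precisely $T=TP_{-}$. The converse is immediate: if $T=TP_{-}$ then $T[1]=TP_{-}[1]=T[0]=0$, because $P_{-}[1]=\tfrac{1}{2}(1-1)=0$. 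Statement $ii)$ is handled in the mirror-image way: assuming $\varphi_{1}=T[x]=0$ propagates through the same recursion along the odd indices, and the converse follows from $P_{+}[x]=\tfrac{1}{2}(x-x)=0$, so that $T=TP_{+}$ gives $T[x]=TP_{+}[x]=0$.

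Finally, $iii)$ follows by combining $i)$ and $ii)$: the hypotheses $T[1]=0$ and $T[x]=0$ together force $\varphi_{k}=0$ for all even $k$ and for all odd $k$, hence $\varphi_{k}=0$ for every $k\in\mathbb{N}_{0}$, and by linearity $T\equiv 0$ on $\mathcal{P}(\mathbb{R})$; the converse is trivial. The only point requiring any care is the induction step, where one must use that the $G(x,s)$ appearing in (\ref{slbaseconstk>2}) does not depend on $k$ (as noted in Remark \ref{Remark s-l-base}), so that a single vanishing member annihilates the entire half of the base of its parity. Beyond this observation the argument is routine, and I do not anticipate a genuine obstacle.
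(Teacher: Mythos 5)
Your proof is correct and takes essentially the same route as the paper: the vanishing of $\varphi_{0}$ (resp.\ $\varphi_{1}$) is propagated through the recursion (\ref{slbaseconstk>2}) of Remark \ref{Remark s-l-base} to annihilate the even (resp.\ odd) half of the s-$L$-base, after which $T=TP_{-}$ (resp.\ $T=TP_{+}$) follows by comparison on monomials and linearity, with the converses immediate from $P_{-}[1]=P_{+}[x]=0$. The only cosmetic difference is in part $iii)$, where the paper deduces $T=0$ from the operator identity $T=T(P_{+}+P_{-})=TP_{+}+TP_{-}=2T$ instead of observing directly that every $\varphi_{k}$ vanishes; the two variants are interchangeable.
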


\begin{proof}
We prove the direct assertions corresponding to $i)$, $ii)$ and $iii)$. The
reciprocal assertions are obvious. First, denote by $\varphi _{k}=T[x^{k}]$
the s-$L$-base corresponding to $T$. Let us prove $i)$. Since $\varphi
_{0}=T[1]=0$, it follows from Remark \ref{Remark s-l-base} that $\varphi
_{k}=T[x^{k}]=0$ for $k\geq 0$ even. Thus it is easy to see that $%
T[x^{k}]=TP_{-}[x^{k}]$ for all $k\geq 0$. From this and from the linearity
of $T$ and $TP_{-}$ we conclude that $T[p(x)]=TP_{-}[p(x)]$ for every
polynomial $p(x)$. Part $ii)$ is proved by analogy. Part $iii)$ is a
consequence of $i)$ and $iii)$. For instance, since $%
T=TI=T(P_{+}+P_{-})=TP_{+}+TP_{-}=2T$, we have $T=0$.
\end{proof}

\begin{corollary}
\label{CoroTP+}Let $T$ and $M$ be s-transmutation operators on $\mathcal{P}(%
\mathbb{R})$ such that $T[1]=M[1]$ and $T[x]=M[x]$. Then $T=M$ on $\mathcal{P%
}(\mathbb{R})$.
\end{corollary}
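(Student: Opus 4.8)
The plan is to reduce the statement to part $iii)$ of Proposition \ref{TP+-} by passing to the difference operator. First I would set $N:=T-M$, a linear map on $\mathcal{P}(\mathbb{R})$. By hypothesis $N[1]=T[1]-M[1]=0$ and $N[x]=T[x]-M[x]=0$. If I can show that $N$ is itself an s-transmutation operator on $\mathcal{P}(\mathbb{R})$, then part $iii)$ of Proposition \ref{TP+-} applies verbatim to $N$ and yields $N\equiv 0$, that is, $T=M$ on $\mathcal{P}(\mathbb{R})$.

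The key step is therefore to verify that $N$ is an s-transmutation, i.e. that $\{N[x^{k}]\}=\{\varphi_{k}-\psi_{k}\}$ is an s-$L$-base, where $\varphi_{k}:=T[x^{k}]$ and $\psi_{k}:=M[x^{k}]$ are the s-$L$-bases associated with $T$ and $M$ for the common operator $L$. This rests only on the linearity of the defining relations: applying $\frac{d^{2}}{dx^{2}}-q$ to $\varphi_{k}-\psi_{k}$ and invoking (\ref{L-base0,1}) and (\ref{L-baseplus2}) for each of $\{\varphi_{k}\}$ and $\{\psi_{k}\}$ gives $\left(\frac{d^{2}}{dx^{2}}-q\right)(\varphi_{k}-\psi_{k})=0$ for $k=0,1$ and $=k(k-1)(\varphi_{k-2}-\psi_{k-2})$ for $k\geq 2$, so the difference again satisfies (\ref{L-base0,1})--(\ref{L-baseplus2}). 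The standard normalization is equally immediate: for $k\geq 2$ both $\varphi_{k}$ and $\psi_{k}$ vanish together with their first derivatives at $0$, whence $(\varphi_{k}-\psi_{k})(0)=0$ and $(\varphi_{k}-\psi_{k})^{\prime}(0)=0$. Thus $\{\varphi_{k}-\psi_{k}\}$ is an s-$L$-base and $N$ is an s-transmutation.

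I do not expect a genuine obstacle here; the only point requiring attention is the purely formal observation that the three conditions defining an s-$L$-base --- (\ref{L-base0,1}), (\ref{L-baseplus2}), and the normalization $\varphi_{k}(0)=\varphi_{k}^{\prime}(0)=0$ --- are all \emph{linear}, hence stable under subtracting two s-$L$-bases attached to the same $L$. As an alternative that bypasses Proposition \ref{TP+-}, one could argue directly from Remark \ref{Remark s-l-base}: since an s-$L$-base is completely determined by its first two elements, the hypotheses $\varphi_{0}=\psi_{0}$ and $\varphi_{1}=\psi_{1}$ force $\varphi_{k}=\psi_{k}$ for every $k$, so $T[x^{k}]=M[x^{k}]$ and $T=M$ follows by linearity. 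Both routes are short, and I would present the difference-operator argument as the primary one, since it directly reuses the proposition just established.
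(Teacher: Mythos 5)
Your proposal is correct and follows essentially the same route as the paper, which also sets $S=T-M$, notes $S[1]=S[x]=0$, and invokes part $iii)$ of Proposition \ref{TP+-}. Your explicit verification that the difference of two s-transmutations for the same $L$ is again an s-transmutation (by linearity of (\ref{L-base0,1}), (\ref{L-baseplus2}) and the normalization) merely fills in a step the paper leaves implicit, and your alternative via Remark \ref{Remark s-l-base} is a valid shortcut of the same idea.
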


\begin{proof}
This follows at once from part $iii)$ of the above proposition. Setting $%
S=T-M$, we have that $S$ is an s-transmutation operator and $S[1]=S[x]=0$.
Thus, $S\equiv 0$ on $\mathcal{P}(\mathbb{R})$.
\end{proof}

\begin{proposition}
\label{Propothertrans}Let $T$ be an s-transmutation operator on $\mathcal{P}(%
\mathbb{R})$. Then:

\begin{description}
\item[$i)$] $T\frac{d}{dx}$ is an s-transmutation operator on $\mathcal{P}(%
\mathbb{R})$ if and only if $T[x]=0$, and in this case $T\frac{d}{dx}=T\frac{%
d}{dx}P_{-}$ on $\mathcal{P}(\mathbb{R})$;

\item[$ii)$] $TA$ is an s-transmutation operator on $\mathcal{P}(\mathbb{R})$
if and only if $T[1]=0$, and in this case $TA=TAP_{+}$ on $\mathcal{P}(%
\mathbb{R});$
\end{description}
\end{proposition}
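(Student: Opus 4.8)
The plan is to reduce both parts to a single check: whether the system of functions obtained by applying the new operator to the powers $x^k$ is an s-$L$-base. By the correspondence established after Definition \ref{DefLBase} (an operator on $\mathcal{P}(\mathbb{R})$ is an s-transmutation iff its values on the powers form an s-$L$-base), verifying relations (\ref{L-base0,1})--(\ref{L-baseplus2}) together with the vanishing conditions of Definition \ref{standard} is exactly what is required. Throughout I write $\varphi_k=T[x^k]$ for the s-$L$-base associated with $T$, and I use that the operators $\frac{d}{dx}$ and $A$ map $\mathcal{P}(\mathbb{R})$ into itself, so that $T\frac{d}{dx}$ and $TA$ are well-defined linear operators on $\mathcal{P}(\mathbb{R})$.

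For part $i)$ I would first compute images: since $\frac{d}{dx}[x^k]=kx^{k-1}$, setting $\widetilde\varphi_k:=T\frac{d}{dx}[x^k]$ gives $\widetilde\varphi_0=0$ and $\widetilde\varphi_k=k\varphi_{k-1}$ for $k\geq1$. Using that $\{\varphi_k\}$ satisfies (\ref{L-base0,1})--(\ref{L-baseplus2}), a direct substitution shows $\{\widetilde\varphi_k\}$ satisfies the same $L$-base relations for every $k$ (the case $k=2$ works because $\widetilde\varphi_0=0$ and $L\varphi_1=0$), so $\{\widetilde\varphi_k\}$ is always an $L$-base. The vanishing condition $\widetilde\varphi_k(0)=\widetilde\varphi_k'(0)=0$ is automatic for $k\geq3$ from the s-condition on $\{\varphi_k\}$, while for $k=2$ it reduces to $\varphi_1(0)=\varphi_1'(0)=0$. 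The crux is that $\varphi_1=T[x]$ is a weak solution of $L\varphi_1=0$, so by Proposition \ref{Prop_weak} and uniqueness of the Cauchy problem for this equation with $L^1$ potential, the two scalar conditions $\varphi_1(0)=\varphi_1'(0)=0$ force $\varphi_1\equiv0$, i.e.\ $T[x]=0$; hence $\{\widetilde\varphi_k\}$ is an s-$L$-base iff $T[x]=0$. For the accompanying identity I would argue algebraically: when $T[x]=0$, Proposition \ref{TP+-}$ii)$ gives $T=TP_{+}$, and combining this with $\frac{d}{dx}P_{-}=P_{+}\frac{d}{dx}$ from Proposition \ref{PropP+-} yields $T\frac{d}{dx}P_{-}=TP_{+}\frac{d}{dx}=T\frac{d}{dx}$.

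Part $ii)$ follows the same scheme. Since $A[x^k]=\frac{x^{k+1}}{k+1}$, the images $\widehat\varphi_k:=TA[x^k]=\frac{1}{k+1}\varphi_{k+1}$ automatically satisfy the vanishing conditions for all $k\geq2$ (because then $k+1\geq3$), and the $L$-base relations for $k=0$ and $k\geq2$ check out by direct substitution. The only relation that can fail is (\ref{L-base0,1}) for $k=1$: one computes $L\widehat\varphi_1=\frac12 L\varphi_2=\varphi_0$, which vanishes precisely when $\varphi_0=T[1]=0$. Thus $\{\widehat\varphi_k\}$ is an s-$L$-base iff $T[1]=0$. For the identity, when $T[1]=0$ Proposition \ref{TP+-}$i)$ gives $T=TP_{-}$, and together with $AP_{+}=P_{-}A$ from Proposition \ref{PropP+-} this gives $TAP_{+}=TP_{-}A=TA$.

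The only genuinely nonroutine point is the uniqueness argument in part $i)$ that upgrades the two scalar conditions $\varphi_1(0)=\varphi_1'(0)=0$ to the operator identity $T[x]=0$; it is there that one must invoke well-posedness of the initial value problem for $u''-qu=0$ with $q\in L^1$. Everything else is bookkeeping with the defining relations of an $L$-base and the commutation rules of Proposition \ref{PropP+-}.
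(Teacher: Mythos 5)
Your proof is correct and takes essentially the same route as the paper: both analyze the system obtained by applying the new operator to the powers $x^{k}$ (the paper's direct computation of the defect $TA\tfrac{d^{2}}{dx^{2}}u=T\tfrac{d}{dx}u-u^{\prime }(0)T[1]$ in part $ii)$ is just the operator-level form of your relation $L\widehat{\varphi }_{1}=\varphi _{0}$) and then derive the operator identities from Propositions \ref{TP+-} and \ref{PropP+-}. You are in fact slightly more careful than the printed proof: you make explicit the Cauchy-uniqueness step that upgrades $\varphi _{1}(0)=\varphi _{1}^{\prime }(0)=0$ to $T[x]=0$, which the paper dismisses as ``easy to conclude,'' and your chain $T=TP_{-}$, $TAP_{+}=TP_{-}A=TA$ in part $ii)$ silently corrects a misprint in the paper's proof (which writes $T=TP_{+}$ and $TA=TP_{+}A=TAP_{-}$ at that point).
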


\begin{proof}
Let $T$ be an s-transmutation operator and denote by $\varphi _{k}=T[x^{k}]$
the corresponding s-$L$-base. Let us prove $i)$. The transmutation property
of $T\frac{d}{dx}$ is a straightforward calculation. Analyzing the
corresponding $L$-base $\psi _{k}=T\frac{d}{dx}[x^{k}]=kT\left[ x^{k-1}%
\right] =k\varphi _{k-1}$, it is easy to conclude that $\psi _{k}$ is
standard iff $\varphi _{1}=T\left[ x\right] =0$. Thus, from Proposition \ref%
{TP+-}, $T=TP_{+}$ and therefore $T\frac{d}{dx}=TP_{+}\frac{d}{dx}=T\frac{d}{%
dx}P_{-}$.

$(ii)$ From the equalities $(\tfrac{d^{2}}{dx^{2}}-q)TAu=T\tfrac{d^{2}}{%
dx^{2}}Au=T\tfrac{d}{dx}u$ and 
\begin{equation*}
TA\tfrac{d^{2}}{dx^{2}}u(x)=T\left[ u^{\prime }(x)-u^{\prime }(0)\right] =T%
\tfrac{d}{dx}u-u^{\prime }(0)T[1]
\end{equation*}%
we see that $TA$ is a transmutation operator on $\mathcal{P}(\mathbb{R})$
whenever $T[1]=0$. In this case, Proposition \ref{TP+-} tell us that $%
T=TP_{+}$, from which follows the equality $TA=TP_{+}A=TAP_{-}$.
\end{proof}

\begin{theorem}
\label{TheoTrans}Let $T$ be an s-transmutation operator on $\mathcal{P}(%
\mathbb{R})$. Then the operators $TP_{+}$, $TP_{-}$, $TAP_{+}$ and $T\frac{d%
}{dx}P_{-}$ are s-transmutations on $\mathcal{P}(\mathbb{R})$ as well.
\end{theorem}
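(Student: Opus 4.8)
The plan is to derive Theorem \ref{TheoTrans} as a direct consequence of the structural results already established in this section, particularly Proposition \ref{Propothertrans} and the projection identity part $iii)$ of Proposition \ref{TP+-}. The key observation is that every one of the four operators in the statement has been arranged so that the obstruction identified in Proposition \ref{Propothertrans} is automatically satisfied after composing $T$ with the appropriate projection.

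First I would handle $TP_{+}$ and $TP_{-}$, which are the simplest. For $TP_{-}$, I would verify that the $L$-base it generates, namely $\psi_{k} = TP_{-}[x^{k}]$, is standard. Since $P_{-}$ annihilates even powers and fixes odd ones, we have $\psi_{k} = 0$ for $k$ even and $\psi_{k} = \varphi_{k}$ for $k$ odd; using Remark \ref{Remark s-l-base} together with the fact that $\{\varphi_{k}\}$ is already an s-$L$-base, the boundary conditions $\psi_{k}(0) = \psi_{k}'(0) = 0$ for $k \geq 2$ follow immediately. The transmutation property itself should follow from Proposition \ref{PropP+-}, which lets me commute $\tfrac{d^{2}}{dx^{2}}$ past $P_{-}$ (note $\tfrac{d}{dx}P_{-} = P_{+}\tfrac{d}{dx}$, so $\tfrac{d^{2}}{dx^{2}}P_{-} = P_{-}\tfrac{d^{2}}{dx^{2}}$), reducing the transmutation identity for $TP_{-}$ to the known one for $T$. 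The argument for $TP_{+}$ is entirely symmetric.

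Next I would treat $TAP_{+}$ and $T\tfrac{d}{dx}P_{-}$, and here the point is to invoke Proposition \ref{Propothertrans} after first arranging its hypothesis. For $T\tfrac{d}{dx}P_{-}$, I note that $TP_{+}$ is an s-transmutation (by the previous paragraph) and that $(TP_{+})[x] = 0$ since $P_{+}$ kills the odd power $x$; part $i)$ of Proposition \ref{Propothertrans} then shows $(TP_{+})\tfrac{d}{dx}$ is an s-transmutation, and using $\tfrac{d}{dx}P_{-} = P_{+}\tfrac{d}{dx}$ from Proposition \ref{PropP+-} gives $T\tfrac{d}{dx}P_{-} = TP_{+}\tfrac{d}{dx} = (TP_{+})\tfrac{d}{dx}$, as desired. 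Symmetrically, for $TAP_{+}$ I would use that $TP_{-}$ is an s-transmutation with $(TP_{-})[1] = 0$, apply part $ii)$ of Proposition \ref{Propothertrans} to conclude $(TP_{-})A$ is an s-transmutation, and then reconcile with $TAP_{+}$ via the commutation $AP_{+} = P_{-}A$ from Proposition \ref{PropP+-}, so that $TAP_{+} = TP_{-}A = (TP_{-})A$.

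The main obstacle I anticipate is purely bookkeeping rather than conceptual: the commutation relations in Proposition \ref{PropP+-} swap the parity subscript ($P_{+} \leftrightarrow P_{-}$) each time a derivative or antiderivative passes through, so I must track carefully which projection lands where to match the operators exactly as written in the statement. In particular I should double-check the sign and parity conventions so that, for example, $T\tfrac{d}{dx}P_{-}$ really equals the operator whose transmutation property I established, rather than its companion $T\tfrac{d}{dx}P_{+}$; the cleanest safeguard is to verify each identity on the monomials $x^{k}$ directly, where the computations are one-line and leave no room for a parity slip.
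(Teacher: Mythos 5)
Your proposal is correct and takes essentially the same route as the paper: first establish $TP_{\pm}$ directly by commuting $\tfrac{d^{2}}{dx^{2}}$ past the projections (Proposition \ref{PropP+-}) and checking that the induced $L$-base vanishes together with its derivative at $0$, then obtain $T\tfrac{d}{dx}P_{-}=(TP_{+})\tfrac{d}{dx}$ and $TAP_{+}=(TP_{-})A$ from Proposition \ref{Propothertrans} combined with the commutation relations. The only difference is cosmetic: the paper writes out the cases $TP_{+}$ and $T\tfrac{d}{dx}P_{-}$ and dismisses $TP_{-}$ and $TAP_{+}$ ``by analogous means,'' whereas you spell out exactly those symmetric cases, with the parity bookkeeping done correctly.
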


\begin{proof}
Using the relation $\frac{d^{2}}{dx^{2}}P_{+}=P_{+}\frac{d^{2}}{dx^{2}}$
(Proposition \ref{PropP+-}) it is easy to see that $TP_{+}$ is a
transmutation operator. As the corresponding $L$-base $\psi
_{k}=TP_{+}[x^{k}]$ is such that $\psi _{k}=T[x^{k}]=\varphi _{k}$ for $k$
even, and $\psi _{k}=0$ for $k$ odd, we easily see that $\psi _{k}(0)=\psi
_{k}^{\prime }(0)=0$, $k\geq 2$. Thus $\psi _{k}$ is an s-$L$-base and $%
TP_{+}$ is an s-transmutation operator.

Let us now consider the operator $T\frac{d}{dx}P_{-}$. As $TP_{+}$ is an
s-transmutation and $TP_{+}[x]=0$, then, in accordance with Proposition \ref%
{Propothertrans} part $i)$, the operator $TP_{+}\frac{d}{dx}=T\frac{d}{dx}%
P_{-}$ is an s-transmutation as well.

By analogous means we can prove that $TP_{-}$ and $TAP_{+}$ are
s-transmutation operators.
\end{proof}

\begin{proposition}
\label{PropTphiTpsi}Let $\left\{ \varphi _{k}\right\} $ and $\left\{ \psi
_{k}\right\} $ be s-$L$-bases such that $W(\psi _{0},\psi _{1})\neq 0$, and
let $T_{\varphi }$, $T_{\psi }$ be the corresponding s-transmutation
operators satisfying $T_{\varphi }[x^{k}]=\varphi _{k}$, $T_{\psi
}[x^{k}]=\psi _{k}$. Then%
\begin{equation}
T_{\varphi }=\tfrac{W(\varphi _{0},\psi _{1})}{W(\psi _{0},\psi _{1})}%
T_{\psi }P_{+}-\tfrac{W(\varphi _{0},\psi _{0})}{W(\psi _{0},\psi _{1})}%
T_{\psi }AP_{+}+\tfrac{W(\varphi _{1},\psi _{1})}{W(\psi _{0},\psi _{1})}%
T_{\psi }\tfrac{d}{dx}P_{-}-\tfrac{W(\varphi _{1},\psi _{0})}{W(\psi
_{0},\psi _{1})}T_{\psi }P_{-}  \label{TphiTpsi}
\end{equation}%
where the equality is valid on $\mathcal{P}(\mathbb{R}).$
\end{proposition}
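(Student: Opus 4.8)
The plan is to denote the right-hand side of (\ref{TphiTpsi}) by $S$ and to establish $T_{\varphi}=S$ on $\mathcal{P}(\mathbb{R})$ via the uniqueness statement Corollary \ref{CoroTP+}. First I would observe that $S$ is itself an s-transmutation operator: by Theorem \ref{TheoTrans} each of the four operators $T_{\psi}P_{+}$, $T_{\psi}AP_{+}$, $T_{\psi}\frac{d}{dx}P_{-}$, and $T_{\psi}P_{-}$ is an s-transmutation for the same operator $L$, and a finite linear combination of s-transmutations for a fixed $L$ is again one: the transmutation identity (\ref{TransOp}) is linear in the operator, and the standardness conditions $\varphi_{k}(0)=\varphi_{k}^{\prime}(0)=0$, $k\geq 2$, are preserved under linear combinations. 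Consequently, by Corollary \ref{CoroTP+}, it suffices to verify the two identities $S[1]=\varphi_{0}$ and $S[x]=\varphi_{1}$, since $T_{\varphi}[1]=\varphi_{0}$ and $T_{\varphi}[x]=\varphi_{1}$.

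Second, I would evaluate $S$ on the monomials $1$ and $x$ using the elementary actions $P_{+}[1]=1$, $P_{+}[x]=0$, $P_{-}[1]=0$, $P_{-}[x]=x$, $A[1]=x$, $\frac{d}{dx}[1]=0$, $\frac{d}{dx}[x]=1$. Most of the four terms collapse, leaving
\[
S[1]=\tfrac{W(\varphi_{0},\psi_{1})}{W(\psi_{0},\psi_{1})}\psi_{0}-\tfrac{W(\varphi_{0},\psi_{0})}{W(\psi_{0},\psi_{1})}\psi_{1},\qquad S[x]=\tfrac{W(\varphi_{1},\psi_{1})}{W(\psi_{0},\psi_{1})}\psi_{0}-\tfrac{W(\varphi_{1},\psi_{0})}{W(\psi_{0},\psi_{1})}\psi_{1},
\]
because $T_{\psi}P_{+}[1]=\psi_{0}$, $T_{\psi}AP_{+}[1]=T_{\psi}[x]=\psi_{1}$, $T_{\psi}\frac{d}{dx}P_{-}[x]=T_{\psi}[1]=\psi_{0}$, and $T_{\psi}P_{-}[x]=\psi_{1}$, while every other contribution vanishes.

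The crux is then to recognize these right-hand sides as the expansions of $\varphi_{0}$ and $\varphi_{1}$ in the basis $\{\psi_{0},\psi_{1}\}$. By (\ref{L-base0,1}) the four functions $\varphi_{0},\varphi_{1},\psi_{0},\psi_{1}$ are all weak solutions of $Ly=0$, and since $W(\psi_{0},\psi_{1})\neq 0$ the pair $\psi_{0},\psi_{1}$ is a fundamental system, so $\varphi_{0}=a\psi_{0}+b\psi_{1}$ and $\varphi_{1}=c\psi_{0}+d\psi_{1}$ for suitable constants. Here one uses that the Wronskian of two solutions of $y^{\prime\prime}-qy=0$ is constant: by Proposition \ref{Prop_weak} the members of an s-$L$-base lie in $W^{2,1}$, so $W(f,g)=fg^{\prime}-f^{\prime}g$ is absolutely continuous with $W^{\prime}=fg^{\prime\prime}-f^{\prime\prime}g=f(qg)-(qf)g=0$ a.e. (the equation having no first-order term), whence the bracketed coefficients are genuine constants. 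Computing $W(\varphi_{0},\psi_{1})=a\,W(\psi_{0},\psi_{1})$ and $W(\varphi_{0},\psi_{0})=-b\,W(\psi_{0},\psi_{1})$ recovers $a,b$ by Cramer's rule, and likewise $c,d$; substituting back gives exactly $S[1]=\varphi_{0}$ and $S[x]=\varphi_{1}$, and Corollary \ref{CoroTP+} yields $T_{\varphi}=S$.

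I expect the only delicate point to be the bookkeeping in the second step, ensuring each composite operator is applied in the correct order to $1$ and to $x$ so that precisely the advertised Wronskian coefficients survive, together with the justification that all Wronskians in play are constant and that the weak solutions of (\ref{L-base0,1}) may be manipulated like classical $C^{2}$ solutions. The remainder is the routine linear-algebra identity expressing a solution in a fundamental system through Wronskian ratios.
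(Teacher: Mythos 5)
Your proposal is correct and follows essentially the same route as the paper's own proof: denote the right-hand side by a single operator, note it is an s-transmutation via Theorem \ref{TheoTrans} and the constancy of the Wronskians, evaluate on $1$ and $x$, and conclude with Corollary \ref{CoroTP+}. Your added detail (the explicit $W'=fg''-f''g=0$ a.e.\ argument for $W^{2,1}$ weak solutions and the Cramer's-rule expansion of $\varphi_{0},\varphi_{1}$ in the fundamental system $\{\psi_{0},\psi_{1}\}$) merely spells out steps the paper asserts tersely.
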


\begin{proof}
Let $T$ denote the operator defined on $\mathcal{P}(\mathbb{R})$ by the
right hand side of (\ref{TphiTpsi}). Note that as $\varphi _{0},\varphi
_{1},\psi _{0}$ and $\psi _{1}$ are solutions of $u^{\prime \prime }=qu$,
all the involved Wronskians on (\ref{TphiTpsi}) $W(u_{i},v_{i})$ are
constant functions and thus $T$ is an s-transmutation operator on $\mathcal{P%
}(\mathbb{R})$ since it is a linear combination of the s-transmutations $%
T_{\psi }P_{+}$, $T_{\psi }AP_{+}$, $T_{\psi }\tfrac{d}{dx}P_{-}$ and $%
T_{\psi }P_{-}$ (see Theorem \ref{TheoTrans}). As $T_{\psi }P_{-}[1]=T_{\psi
}\tfrac{d}{dx}P_{-}[1]=T_{\psi }P_{+}[x]=$ $T_{\psi }AP_{+}[x]=0$, it
follows that%
\begin{equation*}
T[1]=\tfrac{W(\varphi _{0},\psi _{1})}{W(\psi _{0},\psi _{1})}\psi _{0}-%
\tfrac{W(\varphi _{0},\psi _{0})}{W(\psi _{0},\psi _{1})}\psi _{1}=\varphi
_{0}=T_{\varphi }[1]
\end{equation*}%
and%
\begin{equation*}
T[x]=\tfrac{W(\varphi _{1},\psi _{1})}{W(\psi _{0},\psi _{1})}\psi _{0}-%
\tfrac{W(\varphi _{1},\psi _{0})}{W(\psi _{0},\psi _{1})}\psi _{1}=\varphi
_{1}=T_{\varphi }[x].
\end{equation*}%
Lastly, as $T_{\varphi }$ and $T$ are s-transmutations and $T[1]=T_{\varphi
}[1]$, $T[x]=T_{\varphi }[x]$, it follows from Corollary \ref{CoroTP+} that $%
T_{\varphi }=T$ on $\mathcal{P}(\mathbb{R})$.
\end{proof}

\section{s-Transmutation operators on Sobolev spaces}

\subsection{Existence of s-transmutation operators on W$^{2,1}(-a,a)$\label%
{SectVIOP}}

Let $q\in C[-a,a]$, $a>0$. Then, according to \cite{Marchenco}, a
transmutation operator on $C^{2}[-a,a]$ corresponding to $L=\frac{d^{2}}{%
dx^{2}}-q(x)$ can be represented in the form 
\begin{equation*}
\mathbf{T}u(x)=u(x)+\int_{-x}^{x}K(x,t)u(t)dt
\end{equation*}%
where $K(x,t)$ is constructed as follows. Consider the transformation $%
2u=x+t $, $2v=x-t$, which maps the square $\Omega :-a\leq x,y\leq a$ onto
the square $\Omega _{1}$ in the $(u,v)$-plane with vertices $%
(-a,0),(0,a),(a,0)$ and $(0,-a)$. Then, $K(x,t)=H(\frac{x+t}{2},\frac{x-t}{2}%
)=H(u,v)$ where $H(u,v)$ is the unique solution of the integral equation 
\begin{equation}
H(u,v)=\frac{1}{2}\int_{0}^{u}q(s)ds+\int_{0}^{u}\int_{0}^{v}q(\alpha +\beta
)H(\alpha ,\beta )d\beta d\alpha \text{, in }\Omega _{1}.  \label{intecH}
\end{equation}

If $q$ is $n$ times differentiable, then $K(x,t)$ is $n+1$ times
continuously differentiable \cite{Marchenco}. Moreover, if $q\in C^{1}[-a,a]$%
, then $K(x,t)\in C^{2}(\Omega )$ is the unique classical solution of the
following Goursat problem 
\begin{equation}
\left( \frac{\partial ^{2}}{\partial x^{2}}-q(x)\right) K(x,t)=\frac{%
\partial ^{2}}{\partial t^{2}}K(x,t),\quad \text{in }\Omega .  \label{GP1}
\end{equation}%
\begin{equation}
K(x,x)=\frac{1}{2}\int_{0}^{x}q(s)ds\text{, \ }K(x,-x)=0.  \label{GP2}
\end{equation}

We stress that if $q$ is differentiable, then the transmutation property of $%
\mathbf{T}$ follows from straightforward calculations, using (\ref{GP1}) and
(\ref{GP2}), \cite{KT2}. However, for a continuous $q$, since $K(x,t)\notin
C^{2}(\Omega _{1})$, the proof of this result is not trivial. In \cite%
{Marchenco}, a proof involving the use of Riemann's function was presented.
Another method was mentioned in \cite[pag.9]{LevitanInverse} without
presenting a rigorous proof: if we approximate the continuous potential $q$
by differentiable potentials $q_{n}$, then the transmutation property of $%
\mathbf{T}$ should follow by a limit extrapolation of the transmutations $%
\mathbf{T}_{n}$, say $\mathbf{T}=\lim \mathbf{T}_{n}$. This idea was also
used in \cite[proof of Th. 3.4]{KT} and \cite[proof of Th. 6]{KT1}.

In this section, we give a rigorous justification of the procedure proposed
in \cite{LevitanInverse}, and combining it with Proposition \ref{PropConvTn}%
, we prove the existence of a transmutation operator on $W^{2,1}(-a,a)$, in
the case when $q\in L^{1}(-a,a).$

\begin{proposition}
\label{PropexistH}Let $q\in L^{1}(-a,a)$. Then the integral equation (\ref%
{intecH}) has a unique solution $H(u,v)$. Moreover, $H(u,v)\in C(\Omega
_{1})\cap W^{1,1}(\Omega _{1})$ and 
\begin{equation}
\left\vert H(u,v)\right\vert \leq \left\Vert q\right\Vert
_{L^{1}}e^{a\left\Vert q\right\Vert _{L^{1}}}.  \label{estH}
\end{equation}
\end{proposition}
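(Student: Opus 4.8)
The plan is to solve the integral equation (\ref{intecH}) by the method of successive approximations and to extract all three conclusions — existence and uniqueness, the a priori bound (\ref{estH}), and the regularity $H\in C(\Omega _{1})\cap W^{1,1}(\Omega _{1})$ — from the resulting Neumann series. I would write the equation as $H=H_{0}+\mathcal{K}H$, where $H_{0}(u,v)=\tfrac{1}{2}\int_{0}^{u}q(s)\,ds$ is the free term and $(\mathcal{K}F)(u,v)=\int_{0}^{u}\int_{0}^{v}q(\alpha +\beta )F(\alpha ,\beta )\,d\beta \,d\alpha $ is the Volterra-type operator, and set $h_{0}=H_{0}$, $h_{n+1}=\mathcal{K}h_{n}$, so that the candidate solution is $H=\sum_{n=0}^{\infty }h_{n}$. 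Throughout I keep $M:=\left\Vert q\right\Vert _{L^{1}}$.

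The crux is a single estimate, proved by induction on $n$: namely
\[
|h_{n}(u,v)|\le \frac{M}{2}\,\frac{(M|u|)^{n}}{n!},\qquad (u,v)\in \Omega _{1}.
\]
The case $n=0$ is immediate, and the inductive step rests on the key fact that $\Omega _{1}$ is the diamond $|u|+|v|\le a$: for $(u,v)\in \Omega _{1}$ the substitution $s=\alpha +\beta $ gives $\int_{0}^{|v|}|q(\alpha +\beta )|\,d\beta =\int_{\alpha }^{\alpha +|v|}|q(s)|\,ds\le M$, because the shifted interval stays inside $[-a,a]$; integrating the inductive hypothesis then produces the bound with $n+1$ (absolute values handle all four quadrants uniformly). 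Summing the series gives absolute and uniform convergence on $\Omega _{1}$, hence existence, together with $|H(u,v)|\le \tfrac{M}{2}e^{M|u|}\le \tfrac{1}{2}\left\Vert q\right\Vert _{L^{1}}e^{a\left\Vert q\right\Vert _{L^{1}}}$, which in particular yields (\ref{estH}). Uniqueness follows because the difference $w$ of two solutions satisfies $w=\mathcal{K}w$, whence $w=\mathcal{K}^{n}w$ and $|w|\le \left\Vert w\right\Vert _{\infty }(Ma)^{n}/n!\to 0$, so $w\equiv 0$.

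Continuity of $H$ then comes essentially for free. The free term $H_{0}$ is continuous, and $\mathcal{K}$ maps bounded measurable functions to continuous ones: the integrand $q(\alpha +\beta )F(\alpha ,\beta )$ lies in $L^{1}(\Omega _{1})$ (its integral over the diamond is at most $2aM\left\Vert F\right\Vert _{\infty }$, by the same substitution), so the increment of $\mathcal{K}F$ between two nearby points is the integral of an $L^{1}$ function over a region of measure tending to zero, and hence tends to zero by absolute continuity of the Lebesgue integral. Thus each $h_{n}\in C(\Omega _{1})$ and the uniform limit $H$ is continuous.

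For the $W^{1,1}$ assertion I would differentiate the integral equation to obtain the candidate partial derivatives
\[
\partial _{u}H(u,v)=\tfrac{1}{2}q(u)+\int_{0}^{v}q(u+\beta )H(u,\beta )\,d\beta ,\qquad \partial _{v}H(u,v)=\int_{0}^{u}q(\alpha +v)H(\alpha ,v)\,d\alpha .
\]
Both belong to $L^{1}(\Omega _{1})$: the term $q(u)$ is integrable over the diamond since $\int_{\Omega _{1}}|q(u)|\,du\,dv\le 2a\left\Vert q\right\Vert _{L^{1}}$, while the two integral terms are bounded in modulus by $\left\Vert H\right\Vert _{\infty }M$ via the same interval-containment argument used for the main estimate. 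The one genuinely delicate point — the step I expect to require the most care — is justifying that these formulas are the \emph{weak} derivatives of $H$ when $q$ is merely $L^{1}$, with no pointwise control. Since $H$ is, for a.e.\ fixed $v$ (resp.\ $u$), absolutely continuous in the other variable with the displayed integrable derivative, I would conclude $H\in W^{1,1}(\Omega _{1})$ by invoking the characterization of $W^{1,1}$ through absolute continuity on lines together with integrability of the partials, or equivalently by verifying the integration-by-parts identity against test functions using Fubini. This completes the argument.
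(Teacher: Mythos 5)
Your proposal is correct and follows essentially the same route as the paper's proof: the successive-approximation (Neumann) series $H=\sum_{n\geq 0}h_{n}$ with an inductive factorial estimate obtained from the substitution $s=\alpha +\beta $ and the geometry of the diamond $\Omega _{1}$ (your bound $\tfrac{1}{2}\left\Vert q\right\Vert _{L^{1}}e^{a\left\Vert q\right\Vert _{L^{1}}}$ is in fact sharper by a factor of $2$ and implies (\ref{estH})), uniform convergence yielding existence and continuity, and uniqueness via iterating the homogeneous equation. The only point where you deviate is the $W^{1,1}$ step: the paper disposes of it by observing that $H$ is the sum of the absolutely continuous free term $\tfrac{1}{2}\int_{0}^{u}q(s)\,ds$ and a double-integral term lying in $C^{1}(\Omega _{1})$ (a claim that itself silently uses continuity of translations in $L^{1}$), whereas you compute the weak partial derivatives explicitly, check their integrability over $\Omega _{1}$, and invoke the absolute-continuity-on-lines characterization of $W^{1,1}$ --- a more careful justification of the same conclusion.
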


\begin{proof}
The proof is analogous to the one given in \cite[Th. 1.2.2]{Marchenco} for a
continuous potential $q$. For convenience, we sketch the main steps. First,
using the method of successive approximations, we write a hypothetical
series representation for the solution of (\ref{intecH}), say $%
H(u,v)=\sum\limits_{n=0}^{+\infty }H_{n}(u,v)$ where $H_{0}(u,v)=\frac{1}{2}%
\int_{0}^{u}q(s)ds$ and $n\geq 1$ 
\begin{equation}
H_{n}(u,v)=\int_{0}^{u}\int_{0}^{v}q(\alpha +\beta )H_{n-1}(\alpha ,\beta
)d\beta d\alpha .  \label{Hndef}
\end{equation}%
Next, we prove the uniform convergence of the above series, which in turn
will imply the existence and the unicity of a continuous solution of (\ref%
{intecH}). From (\ref{Hndef}), and following \cite{Marchenco}, we arrive at
the following estimate%
\begin{equation*}
\left\vert H_{n}(u,v)\right\vert \leq \left\Vert H_{0}\right\Vert _{C\left(
\Omega _{1}\right) }\frac{1}{n_{!}}\left\vert \int_{0}^{u+v}\left\vert
\int_{0}^{\beta }\left\vert q(\alpha )\right\vert d\alpha \right\vert d\beta
\right\vert ^{n}\leq \left\Vert q\right\Vert _{L^{1}}\frac{\left(
a\left\Vert q\right\Vert _{L^{1}}\right) ^{n}}{n!}
\end{equation*}%
From this and by the Weierstrass M-test it follows that the series defining $%
H$ is uniformly convergent and $\left\vert H(u,v)\right\vert \leq
\tsum\limits_{n=0}^{+\infty }\left\vert H_{n}(u,v)\right\vert \leq
\left\Vert q\right\Vert _{L^{1}}e^{a\left\Vert q\right\Vert _{L^{1}}}$.
Lastly, from (\ref{intecH}) we see that $H\in W^{1,1}(\Omega _{1})$ as it is
the sum of the absolutely continuous function $\frac{1}{2}\int_{0}^{u}q(s)ds$
with a function in $C^{1}(\Omega _{1})$.
\end{proof}

\begin{proposition}
\label{q to H}Let $q\in L^{1}(-a,a)$ and let $H(u,v)$ be the unique solution
of (\ref{intecH}). Then the mapping $q\rightarrow H(u,v)$ is continuous from 
$L^{1}(-a,a)$ to $C(\Omega _{1})$.
\end{proposition}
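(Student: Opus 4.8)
The plan is to prove sequential continuity, which suffices since both $L^{1}(-a,a)$ and $C(\Omega _{1})$ are metric spaces. Let $q_{k}\rightarrow q$ in $L^{1}(-a,a)$, and denote by $H^{(k)}$ and $H$ the unique solutions of (\ref{intecH}) associated with $q_{k}$ and $q$, respectively (Proposition \ref{PropexistH}). I want to show $\left\Vert H^{(k)}-H\right\Vert _{C(\Omega _{1})}\rightarrow 0$. Writing $D_{k}:=H^{(k)}-H$ and subtracting the two integral equations, the crucial step is the algebraic splitting $q_{k}H^{(k)}-qH=q_{k}D_{k}+(q_{k}-q)H$, which linearizes the problem: $D_{k}$ satisfies a Volterra equation of exactly the form (\ref{intecH}), but with kernel $q_{k}$ and an inhomogeneous term $g_{k}$ replacing $\frac{1}{2}\int_{0}^{u}q(s)\,ds$, namely
\[
g_{k}(u,v)=\frac{1}{2}\int_{0}^{u}(q_{k}-q)(s)\,ds+\int_{0}^{u}\int_{0}^{v}(q_{k}-q)(\alpha +\beta )H(\alpha ,\beta )\,d\beta \,d\alpha .
\]

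Next I would solve this linear equation for $D_{k}$ by the same method of successive approximations used in Proposition \ref{PropexistH}. Setting $D_{k}=\sum_{n\geq 0}D_{k,n}$ with $D_{k,0}=g_{k}$ and $D_{k,n}=\int_{0}^{u}\int_{0}^{v}q_{k}(\alpha +\beta )D_{k,n-1}(\alpha ,\beta )\,d\beta \,d\alpha $, the very same estimate as in that proof yields $\left\vert D_{k,n}(u,v)\right\vert \leq \left\Vert g_{k}\right\Vert _{C(\Omega _{1})}(a\left\Vert q_{k}\right\Vert _{L^{1}})^{n}/n!$, and hence, summing the series,
\[
\left\Vert D_{k}\right\Vert _{C(\Omega _{1})}\leq \left\Vert g_{k}\right\Vert _{C(\Omega _{1})}\,e^{a\left\Vert q_{k}\right\Vert _{L^{1}}}.
\]

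It then remains to show $\left\Vert g_{k}\right\Vert _{C(\Omega _{1})}\rightarrow 0$. The first term of $g_{k}$ is bounded in sup-norm by $\frac{1}{2}\left\Vert q_{k}-q\right\Vert _{L^{1}}$. For the second term I would use the uniform bound $\left\vert H\right\vert \leq \left\Vert q\right\Vert _{L^{1}}e^{a\left\Vert q\right\Vert _{L^{1}}}$ from (\ref{estH}) together with a change of variables: for $(u,v)\in \Omega _{1}$, Fubini and the substitution $\gamma =\alpha +\beta $ give $\int_{0}^{u}\int_{0}^{v}\left\vert (q_{k}-q)(\alpha +\beta )\right\vert \,d\beta \,d\alpha \leq a\left\Vert q_{k}-q\right\Vert _{L^{1}}$, where I use that the shifted intervals of integration stay inside $[-a,a]$ on $\Omega _{1}$. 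Thus $\left\Vert g_{k}\right\Vert _{C(\Omega _{1})}\leq (\tfrac{1}{2}+a\left\Vert q\right\Vert _{L^{1}}e^{a\left\Vert q\right\Vert _{L^{1}}})\left\Vert q_{k}-q\right\Vert _{L^{1}}\rightarrow 0$. Since $\left\Vert q_{k}\right\Vert _{L^{1}}\rightarrow \left\Vert q\right\Vert _{L^{1}}$, the factor $e^{a\left\Vert q_{k}\right\Vert _{L^{1}}}$ stays bounded, so $\left\Vert D_{k}\right\Vert _{C(\Omega _{1})}\rightarrow 0$, which is the desired continuity; in fact the same argument exhibits local Lipschitz continuity of the map.

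The main obstacle is that $H$ depends nonlinearly on $q$ through the product $qH$ under the integral sign in (\ref{intecH}), so one cannot merely subtract the equations and read off an estimate. The splitting $q_{k}H^{(k)}-qH=q_{k}D_{k}+(q_{k}-q)H$ is what resolves this: it isolates the unknown difference $D_{k}$ (multiplied by the new potential $q_{k}$) from a genuinely small inhomogeneity built from $q_{k}-q$ and the fixed, bounded solution $H$, thereby reducing everything to the linear Volterra estimate already established in Proposition \ref{PropexistH}. The only remaining point requiring care is the smallness of $g_{k}$ \emph{uniformly} in $(u,v)$, which the change-of-variables bound on the double integral of $(q_{k}-q)(\alpha +\beta )$ delivers.
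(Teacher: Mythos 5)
Your proposal is correct and follows essentially the same route as the paper: your difference $D_{k}$ and inhomogeneity $g_{k}$ are exactly the paper's $M$ and $M_{0}$ (with $\widetilde{q}=q_{k}$), and both arguments reduce the problem to the linear Volterra estimate $\left\Vert D_{k}\right\Vert _{C(\Omega _{1})}\leq \left\Vert g_{k}\right\Vert _{C(\Omega _{1})}e^{a\left\Vert q_{k}\right\Vert _{L^{1}}}$ via the successive-approximation bound of Proposition \ref{PropexistH}, followed by the same bound $\left\Vert g_{k}\right\Vert _{C(\Omega _{1})}\leq (\tfrac{1}{2}+a\sup_{\Omega _{1}}\left\vert H\right\vert )\left\Vert q_{k}-q\right\Vert _{L^{1}}$. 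The only cosmetic difference is that the paper phrases the conclusion as a Lipschitz-type inequality uniform in $\widetilde{q}$ rather than arguing along a convergent sequence.
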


\begin{proof}
Denote by $\widetilde{H}(u,v)$ the solution of (\ref{intecH}) with $q$
replaced by $\widetilde{q}\in L^{1}(-a,a)$. It suffices to prove that the
inequality 
\begin{equation*}
\left\vert \widetilde{H}(u,v)-H(u,v)\right\vert \leq C\left\Vert \widetilde{q%
}-q\right\Vert _{L^{1}}e^{a\left\Vert \widetilde{q}\right\Vert _{L^{1}}}
\end{equation*}%
holds for all $q,\widetilde{q}\in L^{1}(-a,a)$ with some constant $C>0$ not
depending on $\widetilde{q}$. In order to prove this, we first observe that $%
M(u,v):=\widetilde{H}(u,v)-H(u,v)$ is the unique solution of the integral
equation $M(u,v)=M_{0}(u,v)+\int_{0}^{u}\int_{0}^{v}\widetilde{q}(\alpha
+\beta )M(\alpha ,\beta )d\beta d\alpha $ where 
\begin{equation}
M_{0}(u,v)=\frac{1}{2}\int_{0}^{u}\left( \widetilde{q}(s)-q(s)\right)
ds+\int_{0}^{u}\int_{0}^{v}H(\alpha ,\beta )\left( \widetilde{q}(\alpha
+\beta )-q(\alpha +\beta )\right) d\beta d\alpha \text{.}  \label{Mcero}
\end{equation}%
Now, applying the same reasoning as in the proof of the above proposition,
one can check that $\left\vert M(u,v)\right\vert \leq \left\Vert
M_{0}\right\Vert _{C\left( \Omega _{1}\right) }e^{a\left\Vert \widetilde{q}%
\right\Vert _{L^{1}}}$. On the other hand, from (\ref{Mcero}) we get%
\begin{equation*}
\left\vert M_{0}(u,v)\right\vert \leq \frac{1}{2}\left\Vert \widetilde{q}%
-q\right\Vert _{L^{1}}+a\underset{\Omega _{1}}{\sup }\left\vert H(\alpha
,\beta )\right\vert \left\Vert \widetilde{q}-q\right\Vert
_{L^{1}}=C\left\Vert \widetilde{q}-q\right\Vert _{L^{1}}.
\end{equation*}%
Now, combining the last two inequalities, the desired result follows.
\end{proof}

\begin{theorem}
\label{Prop_exist_transm}Let $q\in L^{1}(-a,a)$ and let $H(u,v)$ be the
unique solution of (\ref{intecH}). Then, $K(x,t)=H(\frac{x+t}{2},\frac{x-t}{2%
})$ is a weak solution of the Goursat problem (\ref{GP1}), (\ref{GP2}) and%
\begin{equation}
\mathbf{T}u(x)=u(x)+\int_{-x}^{x}K(x,t)u(t)dt  \label{VIOp}
\end{equation}%
is an s-transmutation operator on $W^{2,1}(-a,a)$, corresponding to $L=\frac{%
d^{2}}{dx^{2}}-q(x)$. The corresponding $L$-base $\alpha _{k}(x):=\mathbf{T}%
\left[ x^{k}\right] $ is such that%
\begin{equation}
\alpha _{0}(0)=\alpha _{1}^{\prime }(0)=1\text{ and }\alpha _{0}^{\prime
}(0)=\alpha _{1}(0)=0.  \label{icalphak}
\end{equation}%
Moreover, $\mathbf{T}$ is invertible and $\mathbf{T}^{-1}u(x)=u(x)-%
\dint_{-x}^{x}K(t,x)u(t)dt$.
\end{theorem}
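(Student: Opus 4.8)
The plan is to prove every assertion by approximating the $L^{1}$ potential $q$ by smooth ones and transferring the classical Volterra--transmutation theory of \cite{Marchenco} to the limit, using the convergence machinery already established. First I would fix a sequence $q_{n}\in C^{1}[-a,a]$ with $q_{n}\rightarrow q$ in $L^{1}(-a,a)$ (possible since smooth functions are dense in $L^{1}$). For each $n$ the classical result of \cite{Marchenco} gives the transmutation operator $\mathbf{T}_{n}u(x)=u(x)+\int_{-x}^{x}K_{n}(x,t)u(t)\,dt$ on $C^{2}[-a,a]$, with $K_{n}=H_{n}(\tfrac{x+t}{2},\tfrac{x-t}{2})\in C^{2}(\Omega)$ the classical solution of (\ref{GP1})--(\ref{GP2}) for $q_{n}$. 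The candidate $\mathbf{T}$ in (\ref{VIOp}) is bounded on $L^{1}(-a,a)$: the uniform bound (\ref{estH}) gives $\|K\|_{C(\Omega)}\le\|q\|_{L^{1}}e^{a\|q\|_{L^{1}}}$, whence $\|\mathbf{T}u\|_{L^{1}}\le(1+2a\|K\|_{C(\Omega)})\|u\|_{L^{1}}$.

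Next, for the s-transmutation property, each $\mathbf{T}_{n}$ is an s-transmutation: writing $\alpha_{k,n}=\mathbf{T}_{n}[x^{k}]$ and differentiating under the integral by the Leibniz rule, the boundary terms involve $K_{n}(x,\pm x)$, and since $K_{n}(x,-x)=0$ and $K_{n}(0,0)=\tfrac12\int_{0}^{0}q_{n}=0$ one checks $\alpha_{k,n}(0)=\alpha_{k,n}'(0)=0$ for $k\ge2$, so $\{\alpha_{k,n}\}$ is an s-$L$-base. By Proposition \ref{q to H} the map $q\mapsto H$ is continuous from $L^{1}$ into $C(\Omega_{1})$, so $H_{n}\rightarrow H$ and hence $K_{n}\rightarrow K$ uniformly; consequently $\mathbf{T}_{n}[x^{k}]\rightarrow\mathbf{T}[x^{k}]$ uniformly on $[-a,a]$ for every $k$. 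Proposition \ref{PropConvTn} then applies and shows that $\mathbf{T}$ is an s-transmutation on $W^{2,1}(-a,a)$ for $L=\tfrac{d^{2}}{dx^{2}}-q$. The initial values (\ref{icalphak}) follow from the same Leibniz computation applied to $K$ itself: using the continuity of $K$ together with $K(0,0)=0$ and $K(x,-x)=0$ one gets $\alpha_{0}(0)=\alpha_{1}'(0)=1$ and $\alpha_{0}'(0)=\alpha_{1}(0)=0$.

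For the weak Goursat problem, the boundary conditions (\ref{GP2}) hold pointwise: setting $v=0$ and then $u=0$ in (\ref{intecH}) and using the continuity of $H$ yields $K(x,x)=H(x,0)=\tfrac12\int_{0}^{x}q(s)\,ds$ and $K(x,-x)=H(0,x)=0$. For the equation (\ref{GP1}) I would pass the classical identity for $K_{n}$ to the limit: for every $\phi\in C_{c}^{\infty}(\Omega)$, integrating by parts twice in $(\partial_{xx}-\partial_{tt}-q_{n})K_{n}=0$ gives $\int\!\!\int K_{n}(\partial_{xx}\phi-\partial_{tt}\phi)\,dx\,dt=\int\!\!\int q_{n}K_{n}\,\phi\,dx\,dt$; since $K_{n}\rightarrow K$ uniformly and $q_{n}K_{n}\rightarrow qK$ in $L^{1}$, the limit is exactly the weak formulation of (\ref{GP1}). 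Equivalently, one may read (\ref{GP1}) in the characteristic coordinates, where it becomes $\partial_{u}\partial_{v}H=q(u+v)H$, directly from (\ref{intecH}).

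Finally, invertibility is the step I expect to be the main obstacle, precisely because of the low regularity. Since $\mathbf{T}=I+\mathcal{K}$ with $\mathcal{K}$ a Volterra integral operator of the second kind on $L^{1}(-a,a)$ --- the triangular support $|t|\le|x|$ together with (\ref{estH}) forces the iterated kernels to decay factorially --- $\mathbf{T}$ is boundedly invertible with $\mathbf{T}^{-1}=I+\mathcal{R}$, $\mathcal{R}$ again Volterra, so only the identification of the resolvent kernel with $-K(t,x)$ remains. I would verify $\mathbf{T}\mathbf{S}=I$ for $\mathbf{S}u(x)=u(x)-\int_{-x}^{x}K(t,x)u(t)\,dt$: expanding the composition and applying Fubini reduces the claim to the bilinear kernel identity $K(x,t)-K(t,x)=\int K(x,r)K(t,r)\,dr$ over the relevant triangle, which in turn follows from the weak Goursat problem (\ref{GP1})--(\ref{GP2}). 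Alternatively, in the spirit of the approximation scheme, the classical inverse formula $\mathbf{T}_{n}^{-1}u(x)=u(x)-\int_{-x}^{x}K_{n}(t,x)u(t)\,dt$ for the smooth potentials passes to the limit: $K_{n}\rightarrow K$ uniformly makes the operators with kernel $-K_{n}(t,x)$ converge in operator norm to $\mathbf{S}$, and since $\mathbf{T}_{n}\rightarrow\mathbf{T}$ in norm with uniformly bounded inverses (the $\|q_{n}\|_{L^{1}}$ being bounded), $\mathbf{T}^{-1}=\lim\mathbf{T}_{n}^{-1}=\mathbf{S}$.
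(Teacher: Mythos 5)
Your proposal is correct and takes essentially the same route as the paper: approximate $q$ by $C^{1}$ potentials, use Proposition \ref{q to H} to get uniform convergence of the kernels $K_{n}\rightarrow K$, pass the classical Goursat identity and boundary data to the limit, invoke Proposition \ref{PropConvTn} for the s-transmutation property, and recover the inverse formula as the limit of the classical one from \cite{KT1}. Your minor variants are sound and equivalent --- integrating by parts twice so that only uniform convergence of $K_{n}$ is needed (the paper integrates by parts once and additionally proves $L^{1}$ convergence of $\partial_{x}K_{n}$, $\partial_{t}K_{n}$ by differentiating the integral equation), reading the boundary conditions directly off (\ref{intecH}), and the a priori Volterra/Neumann-series invertibility; note only that your first suggested identification of the resolvent kernel via the asserted bilinear identity is left unproved, but your alternative limit argument, which is the paper's own, already covers it.
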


\begin{proof}
Let $q_{n}\in C^{1}[-a,a]$ be a sequence of functions that converges to $q$
in $L^{1}(-a,a)$ and let $H_{n}(u,v)$ be the unique solution of the integral
equation%
\begin{equation}
H_{n}(u,v)=\frac{1}{2}\int_{0}^{u}q_{n}(s)ds+\int_{0}^{u}\int_{0}^{v}q_{n}(%
\alpha +\beta )H_{n}(\alpha ,\beta )d\beta d\alpha \text{.}  \label{intecHn}
\end{equation}%
Since $q_{n}\in C^{1}[-a,a]$, we have that $K_{n}(x,t):=H_{n}(\frac{x+t}{2},%
\frac{x-t}{2})$ is a classical a solution of the equation $(\partial
_{x}^{2}-\partial _{t}^{2}-q_{n}(x))K_{n}(x,t)=0$ (see the beginning of this
section). In particular, the equality%
\begin{equation}
\int \int_{\Omega }\left[ \left( \partial _{t}K_{n}\right) \partial
_{t}\varphi -\left( \partial _{x}K_{n}\right) \partial _{x}\varphi
-q_{n}K_{n}\varphi \right] dxdt=0,  \label{weakKn}
\end{equation}%
holds for all $\varphi \in C_{c}^{\infty }(\Omega )$. As $q_{n}\rightarrow q$
in $L^{1}(-a,a)$, it follows from Proposition \ref{q to H} that $%
H_{n}\rightarrow H$ uniformly on $\Omega _{1}$. Moreover, differentiating (%
\ref{intecHn}) and using the uniform convergence of $H_{n}$, we see that $%
\partial _{u}H_{n}\rightarrow \partial _{u}H$ and $\partial
_{v}H_{n}\rightarrow \partial _{v}H$ in $L^{1}(\Omega _{1})$ as $%
n\rightarrow \infty $. Thus $K_{n}\rightarrow K$ uniformly on $\Omega $ and $%
\partial _{x}K_{n}\rightarrow \partial _{x}K$, $\partial
_{y}K_{n}\rightarrow \partial _{y}K$ in $L^{1}(\Omega )$ as $n\rightarrow
\infty $. Then, letting $n\rightarrow +\infty $ in (\ref{weakKn}), we obtain%
\begin{equation*}
\int \int_{\Omega }\left[ \left( \partial _{t}K\right) \partial _{t}\varphi
-\left( \partial _{x}K\right) \partial _{x}\varphi -qK\varphi \right] dxdt=0
\end{equation*}%
for all $\varphi \in C_{c}^{\infty }(\Omega )$. Thus $K$ is a weak solution
of (\ref{GP1})$.$ The boundary conditions (\ref{GP2}) for $K$ are obtained
by taking the limit of the corresponding boundary conditions for $K_{n}$, $%
K_{n}(x,x)=\frac{1}{2}\int_{0}^{x}q_{n}(s)ds,$ $K_{n}(x-x)=0$. Thus $K$ is a
weak solution of (\ref{GP1}), (\ref{GP2}).

The operator $\mathbf{T}$ is an s-transmutation because it is the uniform
limit of the s-transmutation operators $\mathbf{T}_{n}u(x)=u(x)+%
\int_{-x}^{x}K_{n}(x,t)u(t)dt$, see Proposition \ref{PropConvTn}. The
initial conditions (\ref{icalphak}) of the $L$-base $\left\{ \alpha
_{k}\right\} $ follow by differentiating (\ref{VIOp}) and then using (\ref%
{GP2}) at $x=0$.

The formula for the kernel of $T^{-1}$ was obtained in \cite{KT1} for a
continuous potential $q$ and, by using a limit procedure as above, it is
easy to see that it remains true for an integrable $q$. The theorem is
proved.
\end{proof}

\subsection{A fundamental set of s-transmutation operators}

Let $\mathbf{T}$ be the s-transmutation operator (\ref{VIOp}) studied in
Proposition \ref{Prop_exist_transm}. In accordance with Theorem \ref%
{TheoTrans}\, each one of the operators%
\begin{equation}
\mathbf{T}P_{+},\text{ }\mathbf{T}P_{-},\text{ }\mathbf{T}AP_{+},\text{ }%
\mathbf{T}\tfrac{d}{dx}P_{-},\text{ }  \label{fund}
\end{equation}%
is an s-transmutation on $\mathcal{P}(\mathbb{R})$ (the linear space of
polynomials). Next we show that (\ref{fund}) is a fundamental set of
s-transmutation operators in the sense that any s-transmutation operator is
a linear combination of operators from (\ref{fund}). Set $I=(-a,a)$, $%
0<a<\infty .$

\begin{theorem}
\label{Fulltransmu}Let $\left\{ \varphi _{k}\right\} $, $k\in \mathbb{N}_{0}$%
, be an s-$L$-base. Then there exists $T_{\varphi }$ an s-transmutation
operator on $W^{3,1}(I)$ such that $T_{\varphi }[x^{k}]=\varphi _{k}$. The
equality%
\begin{equation}
T_{\varphi }=\varphi _{0}(0)\mathbf{T}P_{+}+\varphi _{0}^{\prime }(0)\mathbf{%
T}AP_{+}+\varphi _{1}(0)\mathbf{T}\tfrac{d}{dx}P_{-}+\varphi _{1}^{^{\prime
}}(0)\mathbf{T}P_{-}  \label{reltransm}
\end{equation}%
holds on $W^{1,1}(I)$, where the operators $\mathbf{T}$, $P_{\pm }$, and $A$
are given by (\ref{VIOp}) and (\ref{Proye}). Also, $T_{\varphi
}:W^{1,1}(I)\rightarrow L^{1}(I)$ is bounded. Moreover, if $W(\varphi
_{0},\varphi _{1})\neq 0$, then the operator%
\begin{equation*}
M_{\varphi }:=\frac{1}{W(\varphi _{0},\varphi _{1})}\left\{ \varphi
_{1}^{^{\prime }}(0)P_{+}\mathbf{T}^{-1}-\varphi _{0}^{^{\prime }}(0)AP_{+}%
\mathbf{T}^{-1}-\varphi _{1}(0)\tfrac{d}{dx}P_{-}\mathbf{T}^{-1}+\varphi
_{0}(0)P_{-}\mathbf{T}^{-1}\right\}
\end{equation*}%
satisfies $T_{\varphi }M_{\varphi }(u)=M_{\varphi }T_{\varphi }(u)=u$ for
all $u\in W^{2,1}(I)$. In particular, if $\varphi _{1}(0)=0$, then $%
T_{\varphi }:L^{1}(I)\rightarrow L^{1}(I)$ is bounded, and if in addition $%
\varphi _{0}(0)\neq 0$, then $T_{\varphi }$ is invertible with $T_{\varphi
}^{-1}=M_{\varphi }$.
\end{theorem}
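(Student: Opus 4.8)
The plan is to first establish the representation (\ref{reltransm}) on $\mathcal{P}(\mathbb{R})$ by specializing Proposition \ref{PropTphiTpsi} to the reference base of $\mathbf{T}$, and then to bootstrap from polynomials to the Sobolev spaces using the mapping criteria of Theorem \ref{Thmapping} and Remark \ref{ReThmapping}. Concretely, I would apply Proposition \ref{PropTphiTpsi} with $T_\psi=\mathbf{T}$ and $\psi_k=\alpha_k:=\mathbf{T}[x^k]$. By Theorem \ref{Prop_exist_transm} the base $\{\alpha_k\}$ satisfies the initial conditions (\ref{icalphak}); since $\alpha_0,\alpha_1$ solve $Lu=0$, the Wronskian $W(\alpha_0,\alpha_1)$ is constant and equals its value $1\neq0$ at $x=0$, so the hypothesis of Proposition \ref{PropTphiTpsi} holds. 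The four remaining (again constant) Wronskians $W(\varphi_0,\alpha_1),W(\varphi_0,\alpha_0),W(\varphi_1,\alpha_1),W(\varphi_1,\alpha_0)$ evaluate at $x=0$, using (\ref{icalphak}), to $\varphi_0(0),-\varphi_0'(0),\varphi_1(0),-\varphi_1'(0)$. Substituting into (\ref{TphiTpsi}) yields exactly (\ref{reltransm}) on $\mathcal{P}(\mathbb{R})$, and the resulting operator sends $x^k$ to $\varphi_k$ by construction.

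Next I would extend (\ref{reltransm}) off the polynomials. Writing $S:=\varphi_0(0)P_++\varphi_0'(0)AP_++\varphi_1(0)\tfrac{d}{dx}P_-+\varphi_1'(0)P_-$, so that the right-hand side of (\ref{reltransm}) is $\mathbf{T}S$, I note that $P_\pm$ and $A$ are bounded on $L^1(I)$, that $\tfrac{d}{dx}\colon W^{1,1}(I)\to L^1(I)$ is bounded, and that $\mathbf{T}$ is bounded on $L^1(I)$ by Theorem \ref{Prop_exist_transm}. Hence $T_\varphi:=\mathbf{T}S\colon W^{1,1}(I)\to L^1(I)$ is bounded; it agrees with the polynomial transmutation on $\mathcal{P}(\mathbb{R})$, hence maps $x^k$ to the s-$L$-base $\{\varphi_k\}$, and Remark \ref{ReThmapping} then shows $T_\varphi$ is an s-transmutation on $W^{3,1}(I)$. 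If moreover $\varphi_1(0)=0$, the derivative term drops from $S$ and $\mathbf{T}S$ becomes bounded on $L^1(I)$; Theorem \ref{Thmapping} then upgrades $T_\varphi$ to an s-transmutation on $W^{2,1}(I)$ and records its $L^1$-boundedness.

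For the inverse, assuming $W:=W(\varphi_0,\varphi_1)\neq0$, I write $M_\varphi=W^{-1}R\,\mathbf{T}^{-1}$ with $R:=\varphi_1'(0)P_+-\varphi_0'(0)AP_+-\varphi_1(0)\tfrac{d}{dx}P_-+\varphi_0(0)P_-$. Using $\mathbf{T}^{-1}\mathbf{T}=\mathbf{T}\mathbf{T}^{-1}=I$ from Theorem \ref{Prop_exist_transm}, the compositions collapse to $M_\varphi T_\varphi=W^{-1}RS$ and $T_\varphi M_\varphi=W^{-1}\mathbf{T}(SR)\mathbf{T}^{-1}$, so the whole claim reduces to the purely algebraic identities $RS=SR=W\,I$ on $W^{2,1}(I)$. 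I would verify these by expanding the products of the blocks $P_\pm,AP_+,\tfrac{d}{dx}P_-$ using Proposition \ref{PropP+-} together with $P_++P_-=I$, $P_\pm^2=P_\pm$, $P_+P_-=0$, and the calculus identities $\tfrac{d}{dx}A=I$ and $A\tfrac{d}{dx}P_-=P_-$. A parity bookkeeping (each block maps into the even or the odd functions) annihilates all but eight products, and the survivors recombine, after inserting the constant values of the Wronskians at $x=0$, into $W(P_++P_-)=W\,I$. The main obstacle is precisely this step: because $A$ and $\tfrac{d}{dx}$ do not commute and $A\tfrac{d}{dx}P_-=P_-$ requires the argument to be absolutely continuous and to vanish at the origin, one must track domains carefully — which is exactly why the identity is asserted on $W^{2,1}(I)$, where all intermediate operations are classical.

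Finally, when $\varphi_1(0)=0$ and $\varphi_0(0)\neq0$ (still with $W\neq0$), the derivative term also disappears from $R$, so $M_\varphi=W^{-1}R\,\mathbf{T}^{-1}$ is bounded on $L^1(I)$. Combined with the $L^1$-boundedness of $T_\varphi$ from the second paragraph and the density of $W^{2,1}(I)$ in $L^1(I)$, the identities $T_\varphi M_\varphi=M_\varphi T_\varphi=I$ extend by continuity from $W^{2,1}(I)$ to all of $L^1(I)$, yielding $T_\varphi^{-1}=M_\varphi$.
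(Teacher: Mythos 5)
Your proposal is correct and follows essentially the same route as the paper: you obtain (\ref{reltransm}) on $\mathcal{P}(\mathbb{R})$ by applying Proposition \ref{PropTphiTpsi} with $\psi_k=\alpha_k$ and evaluating the constant Wronskians via (\ref{icalphak}), extend by the $L^1$-boundedness of $\mathbf{T}$, and invoke Theorem \ref{Thmapping} together with Remark \ref{ReThmapping}, exactly as the paper does. Where the paper dismisses the inverse relation as ``a few but trivial calculations,'' you actually carry them out correctly, reducing everything to the identities $RS=SR=W(\varphi_0,\varphi_1)\,I$ with sound parity bookkeeping and a valid density argument for $T_\varphi^{-1}=M_\varphi$ on $L^1(I)$ --- a more detailed rendering of the same argument, not a different one.
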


\begin{proof}
As $\mathbf{T}$ and $T_{\varphi }$ are both s-transmutations on $\mathcal{P}(%
\mathbb{R})$ with $T_{\varphi }[x^{k}]=\varphi _{k}$ and $\mathbf{T}%
[x^{k}]=\alpha _{k}$, from Proposition \ref{PropTphiTpsi} and using (\ref%
{icalphak}) we see that (\ref{reltransm}) holds on $\mathcal{P}(\mathbb{R})$%
. As $\mathbf{T}:L^{1}(I)\rightarrow L^{1}(I)$ is bounded, the operator
defined on $\mathcal{P}(\mathbb{R})$ by the right hand side of (\ref%
{reltransm}) can be extended continuously to $W^{1,1}(I)$, from which we
conclude that $T_{\varphi }:W^{1,1}(I)\rightarrow L^{1}(I)$ is bounded. Thus
employing Theorem \ref{Thmapping} and Remark \ref{ReThmapping}, $T_{\varphi
} $ is an s-transmutation operator on $W^{3,1}(I).$ The relation $T_{\varphi
}M_{\varphi }(u)=M_{\varphi }T_{\varphi }(u)=u$ is obtained after a few but
trivial calculations, and using the existence of $\mathbf{T}^{-1}$ (see
Proposition \ref{Prop_exist_transm}). All the remaining affirmations are
obvious.
\end{proof}

\begin{corollary}
\label{CorSPPS}Let $\left\{ \varphi _{k}\right\} $ be an s-$L$-base such
that $W(\varphi _{0},\varphi _{1})\neq 0$ and $\lambda \in \mathbb{C}$. Then
the general weak solution of the equation%
\begin{equation}
v^{\prime \prime }+qv=\lambda v  \label{SL_eq}
\end{equation}%
on $I$ has the form $v_{ger}=c_{1}v_{1}+c_{2}v_{2}$ where $c_{1}$ and $c_{2}$
are arbitrary complex constants,%
\begin{equation}
v_{1}=\sum\limits_{k=0}^{\infty }\lambda ^{k}\frac{\varphi _{2k}}{(2k)!}%
\text{ \ and \ }v_{2}=\sum\limits_{k=1}^{\infty }\lambda ^{k}\frac{\varphi
_{2k+1}}{(2k+1)!}\text{,}  \label{SPPS}
\end{equation}%
and both series, as well as the corresponding series of derivatives,
converge uniformly on $\overline{I}$.
\end{corollary}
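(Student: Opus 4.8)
The plan is to obtain $v_{1}$ and $v_{2}$ as the images under the transmutation operator $T_{\varphi }$ of the canonical fundamental system of the constant-coefficient equation $u''=\lambda u$, and to read off the convergence directly from the Green-function recursion for the s-$L$-base. Normalising the iterates, put $w_{k}=\varphi _{2k}/(2k)!$ and $\widetilde{w}_{k}=\varphi _{2k+1}/(2k+1)!$, so that $v_{1}=\sum_{k}\lambda ^{k}w_{k}$ and $v_{2}=\sum_{k}\lambda ^{k}\widetilde{w}_{k}$ (the sum for $v_{2}$ understood from $k=0$, so that its first term is $\varphi _{1}$). Dividing (\ref{slbaseconstk>2}) by the factorial collapses the recursion to the clean Volterra form $w_{k}(x)=\int_{0}^{x}G(x,s)w_{k-1}(s)\,ds$ with $w_{0}=\varphi _{0}$, and likewise for $\widetilde{w}_{k}$ with $\widetilde{w}_{0}=\varphi _{1}$. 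Since $G$ in (\ref{GreenFu}) is continuous on the compact square $\overline{I}\times \overline{I}$, writing $C=\max |G|$ an immediate induction gives $|w_{k}(x)|\leq \|\varphi _{0}\|_{C(\overline{I})}(C|x|)^{k}/k!$; the Weierstrass $M$-test then yields uniform convergence of $v_{1}$ on $\overline{I}$, dominated by $\|\varphi _{0}\|_{C(\overline{I})}e^{|\lambda |Ca}$. Because $G(x,x)=0$, differentiating under the integral gives $w_{k}'(x)=\int_{0}^{x}\partial _{x}G(x,s)w_{k-1}(s)\,ds$, and since $\psi _{0},\psi _{1}\in C^{1}(\overline{I})$ the kernel $\partial _{x}G$ is bounded, so the same induction furnishes uniform convergence of the differentiated series. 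The estimates for $v_{2}$ are identical.

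For the transmutation step, let $e_{0}(x)=\cosh (\sqrt{\lambda }x)$ and $e_{1}(x)=\sinh (\sqrt{\lambda }x)/\sqrt{\lambda }$ (read as $1$ and $x$ when $\lambda =0$); these are entire solutions of $u''=\lambda u$, hence lie in $W^{3,1}(I)$, with Maclaurin series $e_{0}=\sum_{k}\lambda ^{k}x^{2k}/(2k)!$ and $e_{1}=\sum_{k}\lambda ^{k}x^{2k+1}/(2k+1)!$. By Theorem \ref{Fulltransmu} the operator $T_{\varphi }$ is an s-transmutation on $W^{3,1}(I)$ and is bounded from $W^{1,1}(I)$ to $L^{1}(I)$. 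The polynomial partial sums of $e_{j}$ converge in $W^{1,1}(I)$ to $e_{j}$, so, using $T_{\varphi }[x^{k}]=\varphi _{k}$ and the $W^{1,1}$-boundedness, their images converge in $L^{1}(I)$ to $T_{\varphi }e_{j}$; comparing with the uniform convergence of the first paragraph identifies $T_{\varphi }e_{0}=v_{1}$ and $T_{\varphi }e_{1}=v_{2}$ a.e. The transmutation relation (\ref{TransOp}) now gives $(\tfrac{d^{2}}{dx^{2}}-q)v_{j}=T_{\varphi }e_{j}''=T_{\varphi }(\lambda e_{j})=\lambda v_{j}$, so by Proposition \ref{Prop_weak} each $v_{j}\in W^{2,1}(I)$ is a weak solution of (\ref{SL_eq}). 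As an alternative not relying on the extension of $T_{\varphi }$ beyond polynomials, interchanging sum and integral in the first paragraph yields $v_{1}(x)=\varphi _{0}(x)+\lambda \int_{0}^{x}G(x,s)v_{1}(s)\,ds$, after which the variation-of-parameters characterisation underlying Remark \ref{Remark s-l-base} gives the same conclusion.

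It remains to show that $\{v_{1},v_{2}\}$ is a basis of the weak-solution space. Since the Volterra iterates satisfy $w_{k}(0)=w_{k}'(0)=0$ and $\widetilde{w}_{k}(0)=\widetilde{w}_{k}'(0)=0$ for $k\geq 1$, the series collapse at the origin to $v_{1}(0)=\varphi _{0}(0)$, $v_{1}'(0)=\varphi _{0}'(0)$, $v_{2}(0)=\varphi _{1}(0)$, $v_{2}'(0)=\varphi _{1}'(0)$, whence $W(v_{1},v_{2})(0)=W(\varphi _{0},\varphi _{1})\neq 0$ by hypothesis; hence $v_{1}$ and $v_{2}$ are linearly independent weak solutions. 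By Proposition \ref{Prop_weak} the weak solutions of (\ref{SL_eq}) are precisely the $W^{2,1}(I)$ functions satisfying the equation a.e., and the Cauchy problem with $L^{1}$ coefficients has a unique such solution for each pair of initial data $(v(0),v'(0))$, so the solution space is two-dimensional; consequently every weak solution is $c_{1}v_{1}+c_{2}v_{2}$.

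The main obstacle is exactly the point circumvented in the second paragraph: one may not apply $\tfrac{d^{2}}{dx^{2}}-q$ to $v_{1}$ term by term, because the twice-differentiated series converges only in $L^{1}$ and $q$ is merely integrable, so the formal identity $(\tfrac{d^{2}}{dx^{2}}-q)v_{1}=\sum_{k}\lambda ^{k}(\tfrac{d^{2}}{dx^{2}}-q)w_{k}$ is not licensed at the level of uniform or pointwise limits. Routing the verification either through the transmutation property of $T_{\varphi }$ or through the Volterra integral equation for $v_{1}$ is what makes the argument rigorous; the remaining ingredients are the standard Picard/Weierstrass estimate of the first paragraph and the Wronskian computation of the third.
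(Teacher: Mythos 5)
Your proof is correct, and while it shares the paper's skeleton---realising $v_{1}$ and $v_{2}$ as $T_{\varphi}$ applied to $\cosh(\sqrt{\lambda}x)$ and $\sinh(\sqrt{\lambda}x)/\sqrt{\lambda}$ and invoking the transmutation relation of Theorem \ref{Fulltransmu}---you discharge the two load-bearing steps differently. For convergence, the paper simply appeals to boundedness of $T_{\varphi}$ from $C^{2}(\overline{I})$ to $C^{1}(\overline{I})$ (a fact not stated in Theorem \ref{Fulltransmu}, which records only $W^{1,1}\rightarrow L^{1}$ boundedness, though it does follow from the representation (\ref{reltransm}) in terms of the Volterra operator $\mathbf{T}$); you instead derive Picard-type bounds $|w_{k}|\leq \|\varphi_{0}\|_{C(\overline{I})}(C|x|)^{k}/k!$ directly from the Green-function recursion (\ref{slbaseconstk>2}), which is more self-contained and also forces you to justify the interchange $T_{\varphi}[\sum]=\sum T_{\varphi}$ separately, which you do cleanly via $W^{1,1}\rightarrow L^{1}$ boundedness plus comparison of the $L^{1}$ and uniform limits. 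For completeness of $\{v_{1},v_{2}\}$, the paper argues that $W(\varphi_{0},\varphi_{1})\neq 0$ makes $T_{\varphi}$ one-to-one and concludes at once that applying $T_{\varphi}$ to the general solution of $u''=\lambda u$ yields the general solution of (\ref{SL_eq})---a step that tacitly uses either the operator $M_{\varphi}$ or two-dimensionality of the weak-solution space; you make this explicit by computing $W(v_{1},v_{2})(0)=W(\varphi_{0},\varphi_{1})\neq 0$ (legitimately, since your derivative-series estimates give $v_{j}'(0)$) and invoking Carath\'{e}odory uniqueness for the Cauchy problem with $L^{1}$ coefficients, consistent with how the paper itself uses uniqueness in Remark \ref{Remark s-l-base}. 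Your alternative route via the integral equation $v_{1}=\varphi_{0}+\lambda\int_{0}^{x}G(x,s)v_{1}(s)\,ds$ is a nice bonus: it verifies the weak-solution property without extending $T_{\varphi}$ beyond polynomials at all. You also sensibly correct the index typo in (\ref{SPPS}) (the $v_{2}$ sum must start at $k=0$, as the paper's own proof confirms). Net effect: the paper's proof is shorter but leans on two unproved-in-place facts; yours is longer but closes both gaps with elementary estimates.
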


\begin{proof}
Let $T_{\varphi }$ be the s-transmutation operator studied in Theorem \ref%
{Fulltransmu} and satisfying $T_{\varphi }[x^{k}]=\varphi _{k}$. From the
transmutation property of $T_{\varphi }$, we can see that $(\frac{d^{2}}{%
dx^{2}}-q-\lambda )T_{\varphi }u=T_{\varphi }(\frac{d^{2}}{dx^{2}}-\lambda
)u $ for all $\lambda \in \mathbb{C}$ and $u\in W^{3,1}(I)$. Thus, the
operator $T_{\varphi }$ maps classical solutions of $u^{\prime \prime
}=\lambda u$ to weak solutions of (\ref{SL_eq}). Moreover, as $W(\varphi
_{0},\varphi _{1})\neq 0$, it follows that $T_{\varphi }$ is one-to-one on $%
W^{3,1}(I)$, and thus the general solution of (\ref{SL_eq}) is obtained by
applying $T_{\varphi }$ to the general solution of $u^{\prime \prime
}=\lambda u$, $u_{ger}=c_{1}\cosh (\lambda x)+c_{2}(1/\sqrt{\lambda })\sinh (%
\sqrt{\lambda }x)$. Thus, $v_{ger}=T_{\varphi }[u_{ger}]=c_{1}T_{\varphi
}[\cosh (\sqrt{\lambda }x)]+c_{2}T_{\varphi }[(1/\sqrt{\lambda })\sinh (%
\sqrt{\lambda }x)]$ where%
\begin{equation*}
T_{\varphi }[\cosh (\lambda x)]=T_{\varphi }\left[ \sum_{k=0}^{+\infty
}\lambda ^{k}\frac{x^{2k}}{(2k)!}\right] =\sum_{k=0}^{+\infty }\lambda ^{k}%
\frac{\varphi _{2k}}{(2k)!}=v_{1}
\end{equation*}%
and $T_{\varphi }[(1/\sqrt{\lambda })\sinh (\sqrt{\lambda }x)]=T_{\varphi }%
\left[ \sum_{k=0}^{+\infty }\lambda ^{k}\frac{x^{2k+1}}{(2k+1)!}\right]
=\sum_{k=0}^{+\infty }\lambda ^{k}\frac{\varphi _{2k+1}}{(2k+1)!}=v_{2}$.
Lastly, the convergence of the series defining the solutions $v_{1}$ and $%
v_{2}$ follows from the fact that $T_{\varphi }$ is bounded from $C^{2}(%
\overline{I})$ into $C^{1}(\overline{I})$.
\end{proof}

\begin{remark}
\label{Trans_Relax}Let us relate the results we have obtained so far with
the ones appearing in the literature. Assume that $q\in C\left( \overline{I}%
\right) $ and that equation (\ref{L-base0,1}) possesses a nonvanishing
solution $f$ $\in C^{2}(\overline{I})$. Set $\varphi _{0}:=f$ and note that
a second linearly independent solution can be computed by the formula $%
\varphi _{1}(x)=f(x)\int_{0}^{x}\frac{dt}{f^{2}(t)}$. Without loss of
generality assume that $f(0)=1$ and set $c:=f^{\prime }(0)$. Thus, $\varphi
_{0}(0)=1$, $\varphi _{0}^{\prime }(0)=c$, $\varphi _{1}(0)=0$, and $\varphi
_{1}^{\prime }(0)=1$. Let $\left\{ \varphi _{k}\right\} $ be the s-$L$-base
corresponding to this pair of solutions, and denote by $T_{c}$ the
(transmutation) operator such that $T_{c}\left[ x^{k}\right] =\varphi _{k}$.
Notice that $T_{0}\equiv \mathbf{T}$. The operator $T_{c}$ has been studied
in several places in the literature. In \cite{CKT}, assuming $q\in C^{1}(%
\overline{I})$, it was proved that $T_{c}$ is a Volterra integral operator
and also a transmutation operator on $C^{2}(\overline{I})$. Another proof of
the transmutation property of $T_{c}$, for any value of $c\in \mathbb{C}$,
was given in \cite{KT1} when assuming that $q\in C^{1}(\overline{I})$ and
also on \cite{KT2} \ supposing that $q\in C(\overline{I})$ and that $\varphi
_{0}(x)$ is nonvanishing on $\overline{I}$. In \cite{CKT} a formula for the
kernel of the operator $T_{c}$ was obtained in terms of the kernel of the
same operator with $c=0$. Starting from this result, the relation $T_{c}u=%
\mathbf{T}\left[ u(x)+\frac{c}{2}\tint_{-x}^{x}u(t)dt\right] $ was obtained
in \cite{KT2}. This equality is the one given by (\ref{reltransm}), in this
special case. In \cite{KT}, and with the help of the above relation, it was
proved that $T_{c}$ is a transmutation operator for any $q\in C(\overline{I}%
) $ and for any value of $c$. Perhaps at the same time, another proof for
this result was presented in \cite{CamposPhd}, which is published here (the
proof of Theorem \ref{Thmapping}) for the first time.

The spectral parameter power series representation, like those representing
the general solution of (\ref{SL_eq}) in Corollary \ref{CorSPPS}, was first
introduced in \cite{KravSPPS} as an application of pseudoanalytic function
theory and nowadays represents a useful tool for the numerical solution of
both regular and singular Sturm--Liouville problems, see \cite{BCK}, \cite%
{KT} and the references therein. Originally, the coefficients of the power
series (\ref{SPPS}) were computed by slightly different recursive formulas,
and its construction required the existence of a nonvanishing solution of (%
\ref{SL_eq}) with $\lambda =0$. Here we have relaxed this restriction using
the s-$L$-bases framework.
\end{remark}

\section{s-Transmutation operators on distribution spaces}

Let $I=(-a,a)$, $0<a<\infty $, and let $q\in L_{loc}^{1}(I)$. A linear
operator $T:D_{T}\subset \mathcal{D}^{\prime }(I)\rightarrow \mathcal{D}%
^{\prime }(I)$ is called a transmutation operator on $\chi \subset \mathcal{D%
}^{\prime }(I)$ if for every $u\in \chi $

\begin{equation*}
\left( \frac{d^{2}}{dx^{2}}-q\right) Tu=T\frac{d^{2}}{dx^{2}}u\quad \text{in 
}\mathcal{D}^{\prime }(I).
\end{equation*}%
Our next goal is to show that the transmutation operators constructed in
Section 5 can be continuously extended from Sobolev spaces to suitable
spaces of distributions, preserving the transmutation property. To this, let
us consider the s-transmutation operator studied in Section \ref{SectVIOP},
Theorem \ref{Prop_exist_transm}, 
\begin{equation}
\mathbf{T}u(x)=u(x)+\int_{-x}^{x}K(x,t)u(t)dt.  \label{VIoploc}
\end{equation}%
Since $q\in L_{loc}^{1}(I)$, we have $K(x,t)\in C(I\times I)\cap
W_{loc}^{1,1}(I\times I)$ and thus $\mathbf{T}$ maps $L_{loc}^{1}(I)$ into
itself. In order to extend $\mathbf{T}$ to distributions, using Proposition %
\ref{PropLext}, we first have to look for the existence of the corresponding
transpose operator. An easy computation shows that%
\begin{equation}
\mathbf{T}^{\square }\psi (x)=\psi (x)-\int_{-a}^{-\left\vert x\right\vert
}K(t,x)\psi (t)dt+\int_{\left\vert x\right\vert }^{a}K(t,x)\psi (t)dt,\text{%
\quad }\psi \in C_{c}(I),  \label{Ttransp}
\end{equation}%
with the equality $\int_{I}(\mathbf{T}u)\phi dx=\int_{I}u(\mathbf{T}%
^{\square }\phi )dx$ valid for all $u\in L_{loc}^{1}(I)$, $\phi \in
C_{c}(I). $

Let us treat first the case when $q\in C^{\infty }(I)$.

\begin{proposition}
\label{PropTransExt}Suppose $q\in C^{\infty }(I)$. If the operator $T$ is
continuous on $\mathcal{D}^{\prime }(I)$, and a transmutation on $%
C_{c}^{\infty }(I)$, then $T$ is a transmutation operator on $\mathcal{D}%
^{\prime }(I)$.
\end{proposition}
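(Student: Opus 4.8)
The plan is to prove the proposition by a density-and-continuity argument. The point is that the space of test functions $C_{c}^{\infty }(I)$ is sequentially dense in $\mathcal{D}^{\prime }(I)$ (a standard fact, obtained by mollification together with a cutoff), so it suffices to verify that \emph{both} sides of the transmutation relation define continuous operators on $\mathcal{D}^{\prime }(I)$, since by hypothesis they already agree on the dense set $C_{c}^{\infty }(I)$. Note that for $\phi \in C_{c}^{\infty }(I)$ the distribution $T\phi $ makes sense because $C_{c}^{\infty }(I)\subset \mathcal{D}^{\prime }(I)$ and $T$ is defined on $\mathcal{D}^{\prime }(I)$, so the assumption that $T$ is a transmutation on $C_{c}^{\infty }(I)$ is exactly the statement that the relation holds on this dense subset.

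First I would record the continuity of each building block on $\mathcal{D}^{\prime }(I)$. Differentiation $\frac{d^{2}}{dx^{2}}:\mathcal{D}^{\prime }(I)\rightarrow \mathcal{D}^{\prime }(I)$ is continuous, since for $u_{n}\rightarrow u$ in $\mathcal{D}^{\prime }(I)$ one has $\left\langle u_{n}^{\prime \prime },\phi \right\rangle =\left\langle u_{n},\phi ^{\prime \prime }\right\rangle \rightarrow \left\langle u,\phi ^{\prime \prime }\right\rangle $ for every $\phi \in C_{c}^{\infty }(I)$. Multiplication by $q$ is continuous on $\mathcal{D}^{\prime }(I)$ as well: because $q\in C^{\infty }(I)$, the product $q\phi $ again lies in $C_{c}^{\infty }(I)$, and $\left\langle qu_{n},\phi \right\rangle =\left\langle u_{n},q\phi \right\rangle \rightarrow \left\langle u,q\phi \right\rangle =\left\langle qu,\phi \right\rangle $. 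It is exactly here that the smoothness hypothesis on $q$ enters; for a merely locally integrable $q$ the map $u\mapsto qu$ need not even be defined on all of $\mathcal{D}^{\prime }(I)$, which is why the less regular cases must be handled on smaller distribution spaces. Combining these with the assumed continuity of $T$, the two maps
\begin{equation*}
u\mapsto \left( \frac{d^{2}}{dx^{2}}-q\right) Tu\quad \text{and}\quad u\mapsto T\frac{d^{2}}{dx^{2}}u
\end{equation*}
are compositions of continuous operators, hence continuous from $\mathcal{D}^{\prime }(I)$ to $\mathcal{D}^{\prime }(I)$.

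I would then carry out the limiting step. Given $u\in \mathcal{D}^{\prime }(I)$, choose $\phi _{n}\in C_{c}^{\infty }(I)$ with $\phi _{n}\rightarrow u$ in $\mathcal{D}^{\prime }(I)$. For each $n$ the hypothesis gives $\left( \frac{d^{2}}{dx^{2}}-q\right) T\phi _{n}=T\frac{d^{2}}{dx^{2}}\phi _{n}$, and letting $n\rightarrow \infty $ on both sides, using the continuity established above, yields $\left( \frac{d^{2}}{dx^{2}}-q\right) Tu=T\frac{d^{2}}{dx^{2}}u$ in $\mathcal{D}^{\prime }(I)$, as desired. The only genuinely nonroutine ingredient is the sequential density of $C_{c}^{\infty }(I)$ in $\mathcal{D}^{\prime }(I)$; everything else reduces to the elementary continuity statements above. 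The main point to guard against is the continuity of multiplication by $q$, the sole place where the hypothesis $q\in C^{\infty }(I)$ is indispensable and which would fail without it.
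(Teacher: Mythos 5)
Your proposal is correct and follows essentially the same route as the paper: approximate $u\in \mathcal{D}^{\prime }(I)$ by test functions (the paper cites \cite[Th.~3.18]{Grubb} for this density), apply the transmutation relation on $C_{c}^{\infty }(I)$, and pass to the limit using continuity of $T$, of $\frac{d^{2}}{dx^{2}}$, and of multiplication by $q\in C^{\infty }(I)$ on $\mathcal{D}^{\prime }(I)$. The only difference is cosmetic: you spell out the elementary continuity of the differentiation and multiplication operators, which the paper invokes without comment.
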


\begin{proof}
Let $u\in \mathcal{D}^{\prime }(I)$. As $C_{c}^{\infty }(I)$ is dense in $%
\mathcal{D}^{\prime }(I)$ \cite[Th. 3.18]{Grubb}, there exists a sequence $%
\phi _{n}\in C_{c}^{\infty }(I)$ such that $\phi _{n}\rightarrow u$ in $%
\mathcal{D}^{\prime }(I)$ as $n\rightarrow \infty $. Hence,%
\begin{equation*}
\left( \dfrac{d^{2}}{dx^{2}}-q\right) \mathbf{T}u=\lim_{n\rightarrow \infty
}\left( \dfrac{d^{2}}{dx^{2}}-q\right) \mathbf{T}p_{n}=\lim_{n\rightarrow
\infty }\mathbf{T}\dfrac{d^{2}}{dx^{2}}p_{n}=\mathbf{T}\dfrac{d^{2}}{dx^{2}}%
u,\text{ \ in }\mathcal{D}^{\prime }(I),
\end{equation*}%
where we have used the continuity in $\mathcal{D}^{\prime }(I)$ of the
involved operators and the transmutation property of $T$ on $C_{c}^{\infty
}(I)$.
\end{proof}

\bigskip

\begin{proposition}
\label{PropTdist}Assume that $q\in C^{\infty }(I)$ and let $\mathbf{T}$ be
the linear operator acting on distributions according to the rule%
\begin{equation}
\left\langle \mathbf{T}u,\phi \right\rangle :=\left\langle u,\mathbf{T}%
^{\square }\phi \right\rangle \text{,\quad }u\in \mathcal{D}^{\prime
}(I),\phi \in C_{c}^{\infty }(I)  \label{Tdist}
\end{equation}%
where $\mathbf{T}^{\square }$ is given by (\ref{Ttransp}). Then, $\mathbf{T}:%
\mathcal{D}^{\prime }(I)\rightarrow \mathcal{D}^{\prime }(I)$ is continuous
and invertible. Also, $\mathbf{T}$ is a transmutation operator on $\mathcal{D%
}^{\prime }(I)$.
\end{proposition}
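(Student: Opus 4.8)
The plan is to verify the three claimed properties of $\mathbf{T}$ defined by (\ref{Tdist}) in sequence: continuity, the transmutation property, and invertibility. For continuity, the natural route is to invoke Proposition \ref{PropLext}: it suffices to show that the transpose $\mathbf{T}^{\square}$, given explicitly by (\ref{Ttransp}), maps $C_{c}^{\infty}(I)$ continuously into itself. Here the hypothesis $q\in C^{\infty}(I)$ is essential, since it forces $K(x,t)=H(\frac{x+t}{2},\frac{x-t}{2})\in C^{\infty}(I\times I)$ (by the regularity statement quoted from \cite{Marchenco} at the start of Section 5, $n$ derivatives of $q$ yield $n+1$ derivatives of $K$). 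I would check that $\mathbf{T}^{\square}\psi\in C_{c}^{\infty}(I)$ for $\psi\in C_{c}^{\infty}(I)$ — smoothness away from $x=0$ is clear, and one must confirm smoothness at $x=0$ using the boundary condition $K(x,-x)=0$ from (\ref{GP2}), which makes the two integral terms in (\ref{Ttransp}) match smoothly across $x=0$; compact support is inherited from $\psi$. Continuity in the $C_{c}^{\infty}$ topology then follows since differentiation under the integral sign produces kernels that are again smooth and uniformly controlled on compact sets.

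For the transmutation property, the plan is to apply Proposition \ref{PropTransExt}, whose hypotheses are exactly continuity of $T$ on $\mathcal{D}^{\prime}(I)$ (just established) and the transmutation property on $C_{c}^{\infty}(I)$. The latter holds because, by Theorem \ref{Prop_exist_transm}, $\mathbf{T}$ is an s-transmutation on $W^{2,1}(-a,a)$, and every test function $\phi\in C_{c}^{\infty}(I)$ lies in $W^{2,1}$ on each compact subinterval containing its support; since the transmutation relation (\ref{TransOp}) is local in character and $q$ is smooth, the classical action of $\mathbf{T}$ on such $\phi$ coincides with the distributional one defined by (\ref{Tdist}). Thus Proposition \ref{PropTransExt} yields the transmutation property on all of $\mathcal{D}^{\prime}(I)$.

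For invertibility, I would exploit the explicit inverse from Theorem \ref{Prop_exist_transm}, namely $\mathbf{T}^{-1}u(x)=u(x)-\int_{-x}^{x}K(t,x)u(t)dt$. This operator has the same Volterra-type structure as $\mathbf{T}$, so by the same argument its transpose is continuous on $C_{c}^{\infty}(I)$ and hence it too extends to a continuous operator on $\mathcal{D}^{\prime}(I)$ via Proposition \ref{PropLext}. The identities $\mathbf{T}\mathbf{T}^{-1}=\mathbf{T}^{-1}\mathbf{T}=I$ hold on $L_{loc}^{1}(I)$, in particular on $C_{c}^{\infty}(I)$; passing to transposes gives the corresponding identities for the transpose operators on $C_{c}^{\infty}(I)$, and these lift to $\mathcal{D}^{\prime}(I)$ by density (using that $C_{c}^{\infty}(I)$ is dense in $\mathcal{D}^{\prime}(I)$, as cited in Proposition \ref{PropTransExt}) together with the continuity of the extensions. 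This establishes that the distributional $\mathbf{T}$ is invertible with inverse the distributional extension of $\mathbf{T}^{-1}$.

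The main obstacle I anticipate is the continuity step, specifically verifying that $\mathbf{T}^{\square}\psi$ is genuinely $C^{\infty}$ at the point $x=0$ rather than merely continuous there. The formula (\ref{Ttransp}) involves $|x|$ in the integration limits, which is not smooth at the origin, so one must check that the nonsmooth contributions from differentiating the limits cancel. This is exactly where the Goursat boundary data $K(x,-x)=0$ enters: the boundary terms generated by differentiating $\int_{-a}^{-|x|}$ and $\int_{|x|}^{a}$ with respect to $x$ involve $K(\mp|x|,x)$ evaluated on the diagonal $t=-x$ (up to sign), where $K$ vanishes, so the apparent singularity at $x=0$ is removable at every order. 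Making this cancellation precise for all derivatives is the technical heart of the argument; once it is done, the rest follows smoothly from the abstract extension and density machinery.
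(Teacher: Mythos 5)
Your proposal reproduces the paper's proof essentially step for step: continuity via Proposition \ref{PropLext} by showing $\mathbf{T}^{\square }$ maps $C_{c}^{\infty }(I)$ continuously into itself (using $q\in C^{\infty }(I)\Rightarrow K\in C^{\infty }(I\times I)$, with supports preserved because the integrals in (\ref{Ttransp}) only sample $\psi (t)$ for $\left\vert t\right\vert \geq \left\vert x\right\vert $), the transmutation property via Proposition \ref{PropTransExt} combined with Theorem \ref{Prop_exist_transm}, and invertibility by transposing the explicit Volterra inverse $\mathbf{T}^{-1}u(x)=u(x)-\int_{-x}^{x}K(t,x)u(t)dt$, so the argument is correct and matches the paper. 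One correction to what you call the technical heart: the smoothness of $\mathbf{T}^{\square }\psi $ at $x=0$ does \emph{not} hinge on the Goursat datum $K(x,-x)=0$ --- differentiating (\ref{Ttransp}) separately for $x>0$ and for $x<0$ produces the \emph{same} expression on both sides (the paper's (\ref{derivTtransp})), whose boundary terms $K(-x,x)\psi (-x)$ and $K(x,x)\psi (x)$ are smooth in $x$ in any case (note $K(x,x)=\tfrac{1}{2}\int_{0}^{x}q(s)ds$ does not vanish, yet causes no trouble), so the matching across $x=0$ is structural, the condition $K(-x,x)=0$ merely annihilates one of the two boundary terms, and the verification you plan goes through at every order without any engineered cancellation.
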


\begin{proof}
First let us prove that $\mathbf{T}$ is well defined and continuous on $%
\mathcal{D}^{\prime }(I)$. According to Proposition \ref{PropLext}, it
suffices to show that $\mathbf{T}^{\square }$ is continuous on $%
C_{c}^{\infty }(I)$. Suppose the sequence $\phi _{n}$ converge in $%
C_{c}^{\infty }(I)$ to $\phi $. Then there exists a number $b<a$ such that $%
\func{supp}\phi _{n}\subseteq \lbrack -b,b]\subset I$ for all $n,$ and from (%
\ref{Ttransp}) we see that $\func{supp}(\mathbf{T}^{\square }\phi
_{n})\subseteq \lbrack -b,b]\subset I$ for all $n$. On the other hand, as $%
q\in C^{\infty }(I)$, we have $K(x,t)\in C^{\infty }(I\times I)$ and thus
the expression (\ref{Ttransp}) defining $\mathbf{T}^{\square }\phi $ can be
differentiated infinitely many times whenever $\phi \in C_{c}^{\infty }(I)$.
For example,%
\begin{equation}
\begin{array}{c}
\dfrac{d}{dx}\mathbf{T}^{\square }\phi (x)=\dfrac{d}{dx}\phi
(x)-\dint_{-a}^{-\left\vert x\right\vert }\partial _{x}K(t,x)\phi
(t)dt+\dint_{\left\vert x\right\vert }^{a}\partial _{x}K(t,x)\phi
(t)dt\bigskip  \\ 
+K(-x,x)\phi (-x)+K(x,x)\phi (x).%
\end{array}
\label{derivTtransp}
\end{equation}%
From the above facts it is easy to see that $\mathbf{T}^{\square }$ maps $%
C_{c}^{\infty }(I)$ into itself and that $\mathbf{T}^{\square }\phi
_{n}\rightarrow \mathbf{T}^{\square }\phi $ in $C_{c}^{\infty }(I)$ whenever 
$\phi _{n}\rightarrow \phi $ in $C_{c}^{\infty }(I),$ as $n\rightarrow
\infty $. This proves the continuity of $\mathbf{T}^{\square }$.

Since $\mathbf{T}$ is invertible on $L_{loc}^{1}(I)$ and $\mathbf{T}^{-1}$
is a Volterra integral operator (Theorem \ref{Prop_exist_transm}), it
follows that $\mathbf{T}^{\square }$ is invertible as well, and $\left( 
\mathbf{T}^{\square }\right) ^{-1}=\left( \mathbf{T}^{-1}\right) ^{\square }$%
. Therefore the inverse operator of $\mathbf{T}$ on $\mathcal{D}^{\prime
}(I) $ is defined by the rule $\left\langle \mathbf{T}^{-1}u,\phi
\right\rangle :=\left\langle u,\left( \mathbf{T}^{\square }\right) ^{-1}\phi
\right\rangle $ for $u\in \mathcal{D}^{\prime }(I)$ and $\phi \in C^{\infty
}(I)$.

Lastly, the transmutation property of $\mathbf{T}$ on $\mathcal{D}^{\prime
}(I)$ follows from Proposition \ref{PropTransExt} upon observing that $%
\mathbf{T}$ is continuous on $\mathcal{D}^{\prime }(I)$ and a transmutation
on $C_{c}^{\infty }(I)$ (Theorem \ref{Prop_exist_transm}).
\end{proof}

\begin{corollary}
\label{Cor_dist=clas}Let $q\in C^{\infty }(I)$. Then the differential
equation $u^{\prime \prime }=qu$, $u\in \mathcal{D}^{\prime }(I)$ has only
classical solutions.
\end{corollary}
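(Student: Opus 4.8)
The plan is to use the invertibility and transmutation property of $\mathbf{T}$ on $\mathcal{D}^{\prime }(I)$, established in Proposition \ref{PropTdist}, to conjugate the equation $u^{\prime \prime }=qu$ into the trivial equation $v^{\prime \prime }=0$, whose distributional solutions are completely understood. First I would observe that since $q\in C^{\infty }(I)$, multiplication by $q$ preserves $\mathcal{D}^{\prime }(I)$, so $u^{\prime \prime }=qu$, i.e. $\left( \frac{d^{2}}{dx^{2}}-q\right) u=0$, is meaningful for every $u\in \mathcal{D}^{\prime }(I)$. Let $u$ be such a solution. By Proposition \ref{PropTdist}, $\mathbf{T}$ is continuous and invertible on $\mathcal{D}^{\prime }(I)$ and satisfies the transmutation relation there, so I set $v:=\mathbf{T}^{-1}u$ (so that $u=\mathbf{T}v$) and compute
\[
0=\left( \frac{d^{2}}{dx^{2}}-q\right) u=\left( \frac{d^{2}}{dx^{2}}-q\right) \mathbf{T}v=\mathbf{T}\frac{d^{2}}{dx^{2}}v=\mathbf{T}v^{\prime \prime }.
\]
Because $\mathbf{T}$ is injective on $\mathcal{D}^{\prime }(I)$, this forces $v^{\prime \prime }=0$ in $\mathcal{D}^{\prime }(I)$.

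The next step is the classical fact that a distribution with vanishing second derivative is affine: from $v^{\prime \prime }=0$ one gets that $v^{\prime }$ is a constant distribution $c_{2}$, and then $v=c_{1}+c_{2}x$ for some $c_{1},c_{2}\in \mathbb{C}$. In particular $v$ is (a regular distribution identified with) a polynomial, hence smooth and a member of $L_{loc}^{1}(I)$. To recover $u$, I would then use that on $L_{loc}^{1}(I)$ the distributional action of $\mathbf{T}$ coincides with the integral operator (\ref{VIoploc}), as recorded by the identity $\int_{I}(\mathbf{T}u)\phi \,dx=\int_{I}u(\mathbf{T}^{\square }\phi )\,dx$ for $u\in L_{loc}^{1}(I)$. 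Thus $u(x)=v(x)+\int_{-x}^{x}K(x,t)v(t)\,dt$. Since $q\in C^{\infty }(I)$ yields $K\in C^{\infty }(I\times I)$, and $v$ is a polynomial, the right-hand side is a $C^{\infty }$ function, so $u$ is smooth and solves the equation classically.

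The only nontrivial point—and hence the main thing to get right—is the bookkeeping of the reduction: one must verify that both the transmutation identity and the invertibility are genuinely available on all of $\mathcal{D}^{\prime }(I)$, which is precisely the content of Proposition \ref{PropTdist}, so that $v:=\mathbf{T}^{-1}u$ is legitimate and $\mathbf{T}v^{\prime \prime }=0$ truly yields $v^{\prime \prime }=0$. Once this is secured, the classification of the solutions of $v^{\prime \prime }=0$ and the smoothing effect of the Volterra operator with smooth kernel $K$ are routine, and the conclusion that $u$ is classical follows immediately.
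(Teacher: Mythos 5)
Your proposal is correct and follows essentially the same route as the paper: conjugate by $\mathbf{T}$ using Proposition \ref{PropTdist}, reduce to $v^{\prime\prime}=0$, invoke the classical classification of its distributional solutions, and push the affine $v$ back through $\mathbf{T}$ (the paper writes $u=C_{1}\mathbf{T}[1]+C_{2}\mathbf{T}[x]\in C^{\infty}(I)$, which is the same smoothing step you carry out via the integral representation with the smooth kernel $K$). Your version merely makes explicit two points the paper leaves implicit, namely the injectivity of $\mathbf{T}$ on $\mathcal{D}^{\prime}(I)$ and the agreement of the distributional extension with the Volterra operator on $L_{loc}^{1}(I)$.
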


\begin{proof}
Let $u\in \mathcal{D}^{\prime }(I)$ be a solution of $u^{\prime \prime }=qu$%
. Then, from the transmutation property of $\mathbf{T}$ it follows that $v=%
\mathbf{T}^{-1}u$ is a solution of $v^{\prime \prime }=0$, $v\in \mathcal{D}%
^{\prime }(I)$. The last equation has only classical solutions \cite{Neto},
thus $v=C_{1}+C_{2}x$ for some constants $C_{1}$, $C_{2}$ and therefore $u=%
\mathbf{T}v=C_{1}\mathbf{T}[1]+C_{2}\mathbf{T}[x]\in C^{\infty }(I)$ is a
classical solution of the first equation.
\end{proof}

\begin{corollary}
\label{CorTureg}Let $q\in C^{\infty }(I)$. If $\mathcal{T}$ is a
transmutation operator on $\mathcal{D}^{\prime }(I)$, then $\mathcal{T}\left[
x^{k}\right] $, $k\in \mathbb{N}_{0}$, is a regular distribution from the
space $C^{\infty }(I)$.
\end{corollary}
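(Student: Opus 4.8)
The plan is to extract from the transmutation identity, applied to the monomials, a family of second-order distributional equations for the images $\varphi_k := \mathcal{T}[x^k]$, and then to upgrade each $\varphi_k$ from a mere distribution to a $C^\infty$ function by induction on $k$, the whole weight of the argument resting on Corollary \ref{Cor_dist=clas}. Notice that no continuity or structural assumption on $\mathcal{T}$ is needed: the transmutation relation alone pins down the equations, and their distributional solvability theory does the rest.

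First I would apply the transmutation property of $\mathcal{T}$ to $u=x^{k}$, which is a regular distribution lying in $\mathcal{D}^{\prime }(I)$. Since $\frac{d^{2}}{dx^{2}}x^{k}=0$ for $k=0,1$ and $\frac{d^{2}}{dx^{2}}x^{k}=k(k-1)x^{k-2}$ for $k\geq 2$, the linearity of $\mathcal{T}$ gives, in the sense of $\mathcal{D}^{\prime }(I)$,
\begin{equation*}
\left( \frac{d^{2}}{dx^{2}}-q\right) \varphi _{k}=0\quad (k=0,1),\qquad \left( \frac{d^{2}}{dx^{2}}-q\right) \varphi _{k}=k(k-1)\varphi _{k-2}\quad (k\geq 2).
\end{equation*}
Here the products $q\varphi _{k}$ are legitimate distributions precisely because $q\in C^{\infty }(I)$, and these are exactly the $L$-base relations (\ref{L-base0,1}), (\ref{L-baseplus2}), now read as distributional identities.

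The core is then an induction on $k$ proving $\varphi _{k}\in C^{\infty }(I)$. For the base cases $k=0,1$, the distribution $\varphi _{k}$ solves the homogeneous equation $\psi ^{\prime \prime }=q\psi $ in $\mathcal{D}^{\prime }(I)$, so Corollary \ref{Cor_dist=clas} yields at once $\varphi _{0},\varphi _{1}\in C^{\infty }(I)$. For the inductive step I would assume $\varphi _{k-2}\in C^{\infty }(I)$ and observe that, since $q$ and $\varphi _{k-2}$ are smooth, classical ODE theory (for instance variation of parameters with the Green function $G$ of (\ref{GreenFu}), which is smooth once the two generating solutions of $L\psi =0$ are recognized as smooth) produces a genuine solution $w_{k}\in C^{\infty }(I)$ of $w_{k}^{\prime \prime }-qw_{k}=k(k-1)\varphi _{k-2}$. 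The difference $\varphi _{k}-w_{k}$ then satisfies the homogeneous equation $(\varphi _{k}-w_{k})^{\prime \prime }=q(\varphi _{k}-w_{k})$ in $\mathcal{D}^{\prime }(I)$, so Corollary \ref{Cor_dist=clas} forces $\varphi _{k}-w_{k}\in C^{\infty }(I)$, and hence $\varphi _{k}\in C^{\infty }(I)$.

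The only delicate point is the passage from a distributional solution to a classical smooth one, and this is exactly where Corollary \ref{Cor_dist=clas} carries the load, reducing every regularity question to the already-settled fact that $u^{\prime \prime }=qu$ admits only classical solutions. The construction of the smooth particular solution $w_{k}$ is routine given the smoothness of $q$ and of $\varphi _{k-2}$, so the induction closes and each $\varphi _{k}=\mathcal{T}[x^{k}]$ is a regular distribution belonging to $C^{\infty }(I)$, as claimed.
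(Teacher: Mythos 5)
Your proof is correct and takes essentially the same route as the paper: apply the transmutation relation to the monomials to read off (\ref{L-base0,1}) and (\ref{L-baseplus2}) as identities in $\mathcal{D}^{\prime }(I)$, then invoke Corollary \ref{Cor_dist=clas} to conclude smoothness. Your inductive step---building a smooth particular solution $w_{k}$ by variation of parameters and applying Corollary \ref{Cor_dist=clas} to the homogeneous difference $\varphi _{k}-w_{k}$---merely makes explicit what the paper compresses into the single assertion that (\ref{L-baseplus2}) has only classical solutions.
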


\begin{proof}
Set $\varphi _{k}:=\mathcal{T}\left[ x^{k}\right] \in \mathcal{D}^{\prime
}(I)$ for $k\in \mathbb{N}_{0}$. From the transmutation property of $%
\mathcal{T} $ it follows that $\varphi _{k}$ is a distributional solution of
(\ref{L-base0,1}) when $k=0,1$. For $k\geq 2$, it satisfies (\ref%
{L-baseplus2}). However, from Corollary \ref{Cor_dist=clas} we see that
Equations (\ref{L-base0,1}) and (\ref{L-baseplus2}) have only classical
solutions, thus $\mathcal{T}\left[ x^{k}\right] \in C^{\infty }(I)$.
\end{proof}

\bigskip

The last result allows one to consider the notion of standard transmutation
operator (Definition \ref{standard}) on spaces of distributions.

In what follows, we obtain the analogue of some results of Section \ref%
{Sect4} for distributions. Consider the operators $P_{\pm }u(x)=\frac{%
u(x)\pm u(x)}{2}$ and $Au(x)=\tint_{0}^{x}u(t)dt$ acting on $L_{loc}^{1}(I)$%
. Since $P_{\pm }^{\square }=P_{\pm }$ is continuous on $C_{c}^{\infty }(I)$%
, it follows from Proposition \ref{PropLext} that $P_{\pm }$ is well defined
and continuous on $\mathcal{D}^{\prime }(I)$. A similar extension is not
available for the operator $A$. Indeed, 
\begin{equation*}
A^{\square }\psi (x)=\left\{ 
\begin{array}{cc}
-\int_{-a}^{x}\psi (t)dt & ,-a\leq x\leq 0 \\ 
\int_{x}^{a}\psi (t)dt & 0<x\leq a%
\end{array}%
\right. .
\end{equation*}%
Thus if $\psi \in C_{c}^{\infty }(I)$, we can not be assured of the
continuity of the function $A^{\square }\psi (x)$ at $x=0$. However, it is
easy to see that $\left( AP_{+}\right) ^{\square }=(P_{-}A)^{\square
}=A^{\square }P_{-}^{\square }=A^{\square }P_{-}$ is continuous on $%
C_{c}^{\infty }(I)$, and thus the operator $AP_{+}$ admits a continuous
extension to $\mathcal{D}^{\prime }(I)$.

\begin{proposition}
\label{FundSDist}Let $q\in C^{\infty }(I)$ and let $\mathbf{T}$ be the
operator defined in $\mathcal{D}^{\prime }(I)$ by (\ref{VIoploc})-(\ref%
{Tdist}). Then, a continuous linear operator $\mathcal{T}:\mathcal{D}%
^{\prime }(I)\rightarrow \mathcal{D}^{\prime }(I)$ is an s-transmutation on $%
\mathcal{D}^{\prime }(I)$ if and only if $\mathcal{T}$ is a linear
combination of operators from (\ref{fund}).
\end{proposition}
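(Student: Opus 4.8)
The plan is to prove both implications, treating the ``if'' direction as a consequence of the extension machinery already set up and the ``only if'' direction as the substantive part, which reduces to a density argument; throughout I write $\varphi _{k}:=\mathcal{T}[x^{k}]$. For the ``if'' direction, I would first note that each of the four operators in (\ref{fund}) is well defined and continuous on $\mathcal{D}^{\prime }(I)$: this is exactly the content of the discussion preceding the proposition, where $\mathbf{T}$ (Proposition \ref{PropTdist}), $P_{\pm }$, $AP_{+}$, and $\frac{d}{dx}P_{-}$ were each extended continuously to $\mathcal{D}^{\prime }(I)$. Next, each of these operators is an s-transmutation on $W^{3,1}(I)$ --- by Theorem \ref{TheoTrans} together with the boundedness from $W^{1,1}(I)$ to $L^{1}(I)$ established in the proof of Theorem \ref{Fulltransmu} (via Theorem \ref{Thmapping} and Remark \ref{ReThmapping}) --- and since $C_{c}^{\infty }(I)\subset W^{3,1}(I)$ and the distributional extensions agree with the classical action on test functions, each is in particular a transmutation on $C_{c}^{\infty }(I)$. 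Proposition \ref{PropTransExt} then upgrades each to a transmutation on $\mathcal{D}^{\prime }(I)$. Finally, because the transmutation relation is linear in the operator and the standard conditions $\varphi _{k}(0)=\varphi _{k}^{\prime }(0)=0$ $(k\geq 2)$ are linear and meaningful by Corollary \ref{CorTureg}, any linear combination of operators from (\ref{fund}) is again a continuous s-transmutation on $\mathcal{D}^{\prime }(I)$.

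For the ``only if'' direction, let $\mathcal{T}$ be a continuous s-transmutation on $\mathcal{D}^{\prime }(I)$. By Corollary \ref{CorTureg} each $\varphi _{k}$ is a regular distribution lying in $C^{\infty }(I)$, and by the s-transmutation property $\left\{ \varphi _{k}\right\} $ is an s-$L$-base, so the numbers $\varphi _{0}(0),\varphi _{0}^{\prime }(0),\varphi _{1}(0),\varphi _{1}^{\prime }(0)$ are well defined. I would then introduce the operator
\begin{equation*}
S:=\varphi _{0}(0)\mathbf{T}P_{+}+\varphi _{0}^{\prime }(0)\mathbf{T}AP_{+}+\varphi _{1}(0)\mathbf{T}\tfrac{d}{dx}P_{-}+\varphi _{1}^{\prime }(0)\mathbf{T}P_{-},
\end{equation*}
which is a linear combination of operators from (\ref{fund}) and hence, by the ``if'' direction, a continuous s-transmutation on $\mathcal{D}^{\prime }(I)$. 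By formula (\ref{reltransm}) of Theorem \ref{Fulltransmu} one has $S[x^{k}]=\varphi _{k}$ for every $k$, so $S$ and $\mathcal{T}$ agree on every monomial; by linearity they agree on all of $\mathcal{P}(\mathbb{R})$.

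It remains to pass from agreement on $\mathcal{P}(\mathbb{R})$ to agreement on all of $\mathcal{D}^{\prime }(I)$, and this is the step I expect to be the crux. The point is that $\mathcal{P}(\mathbb{R})$, viewed as regular distributions, is dense in $\mathcal{D}^{\prime }(I)$. To see this, recall that $C_{c}^{\infty }(I)$ is dense in $\mathcal{D}^{\prime }(I)$ (\cite[Th. 3.18]{Grubb}); moreover, given $\phi \in C_{c}^{\infty }(I)$, the Weierstrass theorem furnishes polynomials $P_{n}\rightarrow \phi $ uniformly on $\overline{I}$, whence $\left\langle P_{n},\psi \right\rangle =\int_{I}P_{n}\psi \,dx\rightarrow \int_{I}\phi \psi \,dx=\left\langle \phi ,\psi \right\rangle $ for every $\psi \in C_{c}^{\infty }(I)$, i.e.\ $P_{n}\rightarrow \phi $ in $\mathcal{D}^{\prime }(I)$. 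Thus the closure of $\mathcal{P}(\mathbb{R})$ contains $C_{c}^{\infty }(I)$ and therefore equals $\mathcal{D}^{\prime }(I)$. Since $\mathcal{T}$ and $S$ are both continuous on $\mathcal{D}^{\prime }(I)$ and coincide on the dense set $\mathcal{P}(\mathbb{R})$, they coincide everywhere, so $\mathcal{T}=S$ is a linear combination of operators from (\ref{fund}), completing the proof.
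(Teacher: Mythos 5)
Your proof is correct and follows essentially the same route as the paper: both directions use the operator $\varphi _{0}(0)\mathbf{T}P_{+}+\varphi _{0}^{\prime }(0)\mathbf{T}AP_{+}+\varphi _{1}(0)\mathbf{T}\tfrac{d}{dx}P_{-}+\varphi _{1}^{\prime }(0)\mathbf{T}P_{-}$ via Theorem \ref{Fulltransmu} and Proposition \ref{PropTransExt}, Corollary \ref{CorTureg} to make sense of the initial values of $\varphi _{k}=\mathcal{T}[x^{k}]$, and density of polynomials in $\mathcal{D}^{\prime }(I)$ plus continuity to conclude $\mathcal{T}=S$. The only difference is that you explicitly justify the density of $\mathcal{P}(\mathbb{R})$ in $\mathcal{D}^{\prime }(I)$ (Weierstrass approximation combined with the sequential density of $C_{c}^{\infty }(I)$), a step the paper merely asserts, so your write-up is if anything more complete.
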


\begin{proof}
Once we have shown that each operator in (\ref{fund}) is a continuous
s-transmutation on $\mathcal{D}^{\prime }(I)$, then the same will be true
for all their linear combinations. The continuity follows from the fact that
each one of the operators $\mathbf{T}$, $P_{\pm }$, $\frac{d}{dx}$, and $%
AP_{+}$ is continuous on $\mathcal{D}^{\prime }(I)$, and the transmutation
property on $\mathcal{D}^{\prime }(I)$ is a consequence of Theorem \ref%
{Fulltransmu} and Proposition \ref{PropTransExt}.

Conversely, let $\mathcal{T}$ be a continuous s-transmutation operator on $%
\mathcal{D}^{\prime }(I)$. From Corollary \ref{CorTureg} we see that the
corresponding $L$-base, $\varphi _{k}:=\mathcal{T}\left[ x^{k}\right] $ is
such that $\varphi _{k}\in C^{\infty }(I)$. Let us set 
\begin{equation*}
T_{\varphi }:=\varphi _{0}(0)\mathbf{T}P_{+}+\varphi _{0}^{\prime }(0)%
\mathbf{T}AP_{+}+\varphi _{1}(0)\mathbf{T}\tfrac{d}{dx}P_{-}+\varphi
_{1}^{\prime }(0)\mathbf{T}P_{-}.
\end{equation*}%
Note that by Theorem \ref{Fulltransmu}, we have $T_{\varphi }\left[ x^{k}%
\right] =\varphi _{k}$. Thus, $T_{\varphi }\left[ x^{k}\right] =\mathcal{T}%
\left[ x^{k}\right] $ for every $k\in \mathbb{N}_{0}$, and as both operators
are linear they are equal on polynomials. In fact, as the set of polynomials
is dense in $\mathcal{D}^{\prime }(I)$ and both operators are continuous on $%
\mathcal{D}^{\prime }(I)$, we see that $T_{\varphi }$ and $\mathcal{T}$ are
equal on $\mathcal{D}^{\prime }(I)$. This shows that $\mathcal{T\equiv }%
T_{\varphi }$ is a linear combination of operators from (\ref{fund})$.$
\end{proof}

\bigskip

Now assume that $q\notin C^{\infty }(I)$. In such a case, the domain of the
operator $\mathbf{T}$ defined in (\ref{VIoploc}) is smaller than $\mathcal{D}%
^{\prime }(I)$ and it depends on the smoothness of the kernel $K(x,t)$,
which in turn depends on the smoothness of the potential $q$. Denote by $%
F(I) $, $I=(-a,a)$, any one of the following spaces $L^{p}(I)$, $C^{k}(I)$, $%
k\in \mathbb{N}_{0}$, $1\leq p\leq \infty $. We are going to consider $F(I)$
with the convergence induced by the inductive limit topology (see Section %
\ref{SectionPre}). Define $W^{2}(I;F)=\left\{ \phi \in W^{1,1}(I):\phi
^{\prime \prime }\in F(I)\right\} $. If $K\subset I$ is compact, then $%
W^{2}(K;F)$ is a Banach space with norm $\left\Vert u\right\Vert
_{W^{2}(K,F)}=\left\Vert u\right\Vert _{W^{1,1}(K)}+\left\Vert u\right\Vert
_{F(k)}$. We shall also consider $W_{c}^{2}(I;F)$ with the inductive limit
topology.

The extension of an operator $T$ from functions to functionals will be based
once more on the identity $\left\langle Tu,\phi \right\rangle =\left\langle
u,T^{\square }\phi \right\rangle $, whenever it makes sense.

\begin{proposition}
Assume that $q\in F_{loc}(I)$. Then the operator $\mathbf{T}$ defined in (%
\ref{VIoploc}) is continuous and invertible on $\left( W_{c}^{2}(I;F)\right)
^{\prime }$ as well on $\left( F_{c}(I)\right) ^{\prime }$. Moreover, $%
\mathbf{T}$ is a transmutation operator on $\left( F_{c}(I)\right) ^{\prime
} $.
\end{proposition}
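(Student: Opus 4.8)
The plan follows the template established by Proposition~\ref{PropTdist}, where the case $q \in C^\infty(I)$ was treated. The key observation is that the method used there never truly requires $C^\infty$ smoothness of the test functions—it only requires that $\mathbf{T}^\square$ map the relevant test-function space continuously into itself. So the plan is to replace the pair $(C_c^\infty(I), \mathcal{D}'(I))$ by the two pairs we now care about, namely $(W_c^2(I;F), (W_c^2(I;F))')$ and $(F_c(I), (F_c(I))')$, and rerun the same argument, using the explicit formula (\ref{Ttransp}) for $\mathbf{T}^\square$ and its derivative (\ref{derivTtransp}).

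First I would verify that $\mathbf{T}^\square$ maps $F_c(I)$ continuously into itself and $W_c^2(I;F)$ continuously into itself. For $\psi \in F_c(I)$ with $\operatorname{supp}\psi \subseteq [-b,b] \subset I$, the formula (\ref{Ttransp}) shows $\operatorname{supp}(\mathbf{T}^\square \psi) \subseteq [-b,b]$ as well, so support control is automatic. Since $q \in F_{loc}(I)$, the kernel $K(x,t)$ has the smoothness recorded earlier (for $q \in C^k_{loc}$, $K$ is $C^{k+1}$; the Volterra structure propagates the $F$-regularity), and the integral operators in (\ref{Ttransp}), (\ref{derivTtransp}) preserve membership in $F(K)$ for each compact $K$, with continuity in the $F(K)$-norm. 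The one point requiring care is the $W_c^2(I;F)$ case: I must check that differentiating (\ref{Ttransp}) twice lands $\psi''$ in $F(I)$—this is exactly where the space $W^2(I;F) = \{\phi : \phi'' \in F(I)\}$ is tailored to fit, since the boundary terms produced by differentiation (the $K(\pm x,x)\psi(\mp x)$ type contributions in (\ref{derivTtransp})) inherit the regularity of $\psi$ and $K$, and the second derivative brings in $\psi''$ linearly. Granting continuity of $\mathbf{T}^\square$ on each space, Proposition~\ref{PropLext} immediately gives that $\mathbf{T}$ extends to a continuous operator on each dual.

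For invertibility, I would argue exactly as in Proposition~\ref{PropTdist}: by Theorem~\ref{Prop_exist_transm}, $\mathbf{T}^{-1}$ is itself a Volterra integral operator of the same form $u \mapsto u - \int_{-x}^x K(t,x)u(t)\,dt$, hence $(\mathbf{T}^{-1})^\square$ is continuous on each test space by the same reasoning, and $(\mathbf{T}^\square)^{-1} = (\mathbf{T}^{-1})^\square$. Transposing back, $\mathbf{T}^{-1}$ on the dual is defined by $\langle \mathbf{T}^{-1}u,\phi\rangle := \langle u, (\mathbf{T}^\square)^{-1}\phi\rangle$ and is a genuine two-sided inverse. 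Finally, the transmutation property on $(F_c(I))'$ follows by the density-and-continuity argument of Proposition~\ref{PropTransExt}: test functions (or polynomials, which are dense) on which $\mathbf{T}$ is already known to transmute by Theorem~\ref{Prop_exist_transm} approximate an arbitrary $u \in (F_c(I))'$, and passing to the limit—legitimate because both $\mathbf{T}$ and the differential operators are continuous on the dual—yields $(\tfrac{d^2}{dx^2}-q)\mathbf{T}u = \mathbf{T}\tfrac{d^2}{dx^2}u$.

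The main obstacle I anticipate is bookkeeping the exact regularity transfer through $\mathbf{T}^\square$, especially the boundary terms at $x=0$ and $x=\pm|x|$ that appear when differentiating (\ref{Ttransp}). Unlike the $C^\infty$ case, here the smoothness of $K$ is only finite (matching $F$), so I must confirm that no derivative is demanded of $\psi$ beyond what $W_c^2(I;F)$ or $F_c(I)$ provides, and that the non-smoothness of $A^\square$ at the origin (flagged in the discussion preceding Proposition~\ref{FundSDist}) does not reappear for $\mathbf{T}^\square$ itself—it does not, since $\mathbf{T}^\square$ in (\ref{Ttransp}) is continuous at $x=0$ by construction, the two one-sided integrals matching there. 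Everything else is a routine transcription of the two propositions already proved for the $C^\infty$ potential.
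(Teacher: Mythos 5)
Your plan stalls at the decisive step: proving that $\mathbf{T}^{\square}$ maps $W_{c}^{2}(I;F)$ into itself by ``differentiating (\ref{Ttransp}) twice.'' For rough $q$ the kernel simply does not have two derivatives: Proposition \ref{PropexistH} gives only $K\in C(I\times I)\cap W_{loc}^{1,1}(I\times I)$ when $q\in L_{loc}^{1}(I)$ (and $K\in C^{1}$ when $q$ is merely continuous, i.e.\ $F=C^{0}$), so a second $x$-differentiation of (\ref{derivTtransp}) would place $\partial_{x}^{2}K(t,x)$ inside the integrals, and that object does not exist as a function. Your remark that ``the second derivative brings in $\psi''$ linearly'' is unsupported as written: naive differentiation puts both derivatives on $K$, not on $\psi$, and no mechanism in your sketch transfers them. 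The paper's proof contains exactly the idea your proposal is missing: for $\phi,\psi\in W_{c}^{2,p}(I)$ it integrates by parts and invokes the transmutation property of $\mathbf{T}$ already established on $W_{c}^{2,1}(I)$ (Theorem \ref{Prop_exist_transm}) to obtain the identity $(\mathbf{T}^{\square}\phi)''=\mathbf{T}^{\square}(\phi''-q\phi)$, equation (\ref{secderadjoint}). This expresses the second derivative of $\mathbf{T}^{\square}\phi$ through the zero-order operator $\mathbf{T}^{\square}$ alone (note $\phi\in W_{c}^{2}(I;F)\subset C_{c}(I)$, so $q\phi\in F_{c}(I)$), and never demands any smoothness of $K$ beyond what (\ref{derivTtransp}) uses. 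One could alternatively derive the same identity from the weak Goursat equation satisfied by $K$, moving $\partial_{t}^{2}$ onto the test function by parts, but some such device must be supplied; kernel regularity by itself will not close the argument.

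Your route to the transmutation property on $(F_{c}(I))'$ also diverges from the paper and leaves a hole. You propose the density-and-limit scheme of Proposition \ref{PropTransExt}, but that proposition is proved for $q\in C^{\infty}(I)$ on $\mathcal{D}'(I)$; for the duals $(F_{c}(I))'$ and $(W_{c}^{2}(I;F))'$ the paper establishes no density of test functions or polynomials, and passing $q\,\mathbf{T}\phi_{n}\rightarrow q\,\mathbf{T}u$ to the limit requires pairing against $q\varphi$, which exits $F_{c}(I)$ unless $\varphi$ is taken in $W_{c}^{2}(I;F)$ --- precisely the bookkeeping your sketch defers. The paper avoids approximation entirely: for $u\in (F_{c}(I))'\subset (W_{c}^{2}(I;F))'$ and $\phi\in W_{c}^{2,p}(I)$, a one-line transposition through (\ref{secderadjoint}) yields $\langle (\mathbf{T}u)''-q\mathbf{T}u,\phi\rangle=\langle u,\mathbf{T}^{\square}(\phi''-q\phi)\rangle=\langle u,(\mathbf{T}^{\square}\phi)''\rangle=\langle \mathbf{T}u'',\phi\rangle$, which is the whole proof. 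Your remaining items do match the paper: the support control from (\ref{Ttransp}), the zero-order continuity of $\mathbf{T}^{\square}$ on $F_{c}(I)$ from $K\in C\cap W_{loc}^{1,1}$, the extension to the duals via Proposition \ref{PropLext}, and invertibility via $(\mathbf{T}^{\square})^{-1}=(\mathbf{T}^{-1})^{\square}$ with $\mathbf{T}^{-1}$ the Volterra operator of Theorem \ref{Prop_exist_transm}.
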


\begin{proof}
The continuity of $\mathbf{T}$ on $\left( W_{c}^{2}(I;F)\right) ^{\prime }$
follows from the continuity of $\mathbf{T}^{\square }$ on $W_{c}^{2}(I;F)$,
and its continuity on $\left( F_{c}(I)\right) ^{\prime }$ follows from the
continuity of $\mathbf{T}^{\square }$ on $F_{c}(I)$. Let us prove this when $%
q\in L_{loc}^{p}(I)$, thus $F(I)=L^{p}(I)$ and $%
W_{c}^{2}(I;F)=W_{c}^{2,p}(I) $. The other cases will follow by analogy.
From (\ref{Ttransp}) and since $K(t,x)\in C(I\times I)\cap
W_{loc}^{1,1}(I\times I)$, it is clear that $\mathbf{T}^{\square }$ is
continuous on $L_{c}^{p}(I)$. Moreover, if $\phi \in W_{c}^{2,p}(I)$, then
formula (\ref{derivTtransp}) holds, and from this we see that $\mathbf{T}%
\phi \in W_{c}^{1,p}(I)$. On the other hand, if $\phi ,\psi \in
W_{c}^{2,p}(I)$, then integration by parts gives 
\begin{eqnarray*}
\dint\limits_{I}(T^{\square }\phi )^{\prime }\psi ^{\prime }dx
&=&-\dint\limits_{I}(T^{\square }\phi )\psi ^{\prime \prime
}dx=-\dint\limits_{I}\phi T\psi ^{\prime \prime }dx= \\
&=&-\dint\limits_{I}\phi \left\{ (T\psi )^{\prime \prime }-qT\psi \right\}
dx=-\dint\limits_{I}\left\{ T^{\square }(\phi ^{\prime \prime }-q\phi
)\right\} \psi dx,
\end{eqnarray*}%
where we have used the fact that the functions $\mathbf{T}^{\square }\phi $
and $\mathbf{T}^{\square }\psi $ have compact support and also the
transmutation property of $\mathbf{T}$ on $W_{c}^{2,1}(I)$. Thus we have
shown that $\mathbf{T}^{\square }\phi \in W_{c}^{2,p}(I)$ and 
\begin{equation}
(\mathbf{T}^{\square }\phi )^{\prime \prime }=\mathbf{T}^{\square }(\phi
^{\prime \prime }-q\phi ),\ \phi \in W_{c}^{2,p}(I).  \label{secderadjoint}
\end{equation}%
Now, by (\ref{derivTtransp}) and (\ref{secderadjoint}) we see that $\mathbf{T%
}^{\square }\varphi _{n}\rightarrow \mathbf{T}^{\square }\varphi $ in $%
W_{c}^{2,p}(I)$, whenever $\varphi _{n}\rightarrow \varphi $ in $%
W_{c}^{2,p}(I)$, as $n\rightarrow \infty $. This ensures the continuity of $%
\mathbf{T}^{\square }$ on $W_{c}^{2,p}(I)$.

By analogy with the proof of Theorem \ref{PropTdist}, we arrive at the
invertibility of $\mathbf{T}$.

Lastly, let us prove the transmutation property of $\mathbf{T}$. If $u\in
\left( F_{c}(I)\right) ^{\prime }\subset \left( W_{c}^{2}(I;F)\right)
^{\prime }$, then $u^{\prime \prime },$ $\mathbf{T}u^{\prime \prime }$, $q%
\mathbf{T}u$, $\left( \mathbf{T}u\right) ^{\prime \prime }\in \left(
W_{c}^{2}(I;F)\right) ^{\prime }$ and using (\ref{secderadjoint}) we see
that 
\begin{equation*}
\left\langle \left( \mathbf{T}u\right) ^{\prime \prime }-q\mathbf{T}u,\phi
\right\rangle =\left\langle u,\mathbf{T}^{\square }(\phi ^{\prime \prime
}-q\phi )\right\rangle =\left\langle u,(\mathbf{T}^{\square }\phi )^{\prime
\prime }\right\rangle =\left\langle \mathbf{T}u^{\prime \prime },\phi
\right\rangle
\end{equation*}%
holds for all $\phi \in W_{c}^{2,p}(I)$.
\end{proof}

\bigskip

\begin{remark}
Let $q\in F_{loc}(I)$. Reasoning as in Corollary \ref{Cor_dist=clas}, we can
conclude that the equation $u^{\prime \prime }=qu$, $u\in \left(
F_{c}(I)\right) ^{\prime }$ has only weak solutions $u\in W^{2,1}(I)$. Thus,
as before, we will be able to consider standard transmutation operators.
Moreover, reasoning as in Proposition \ref{FundSDist}, we see that an
s-transmutation operator on $\left( F_{c}(I)\right) ^{\prime }$ that maps
continuously $\left( F_{c}(I)\right) ^{\prime }$ to $\left(
W_{c}^{1}(I;F)\right) ^{\prime }$ must have the form $\mathcal{T}=c_{1}%
\mathbf{T}P_{+}+c_{2}\mathbf{T}AP_{+}+c_{3}\mathbf{T}\tfrac{d}{dx}P_{-}+c_{4}%
\mathbf{T}P_{-}$ where the $c_{i}$ are constants. Moreover, $\mathcal{T}$
maps $\left( F_{c}(I)\right) ^{\prime }$ to itself continuously if and only
if $c_{3}=0$.
\end{remark}

\section{Conclusions and generalizations}

A method to compute the general formula of the s-transmutation operators
corresponding to $L=\frac{d^{2}}{dx^{2}}-q(x)$ has been presented. On the
one hand, the method is based on fairly simple formulas, which allows one to
compute a fundamental system of s-transmutation operators when one single
(one-to-one) s-transmutation operator is known in closed form. On the other
hand, with the aid of classical results we established that a particular
s-transmutation operator can be represented in the form of a Volterra
integral operator of the second kind.

Most of the results of Sections 3 and 4 can be extended to the second-order
linear differential operator $L_{1}=a_{2}(x)\frac{d^{2}}{dx^{2}}+a_{1}(x)%
\frac{d}{dx}+q(x)$ with smooth coefficients. This includes the explicit
formulas leading to the construction of new s-transmutations and then to the
general formula of the s-transmutations corresponding to $L_{1}$, once the
closed form of one single s-transmutation corresponding to $L_{1}$ is known.
If $a_{2}(x)\equiv 1$, then a closed form for such an s-transmutation can be
obtained following \cite[Chap. 1, Th. 7.1]{Lions} and if $L_{1}=\frac{d}{dx}p%
\frac{d}{dx}+q(x)$, by following \cite{KST}.

It would also be interesting to develop this method for higher order linear
differential operators. However, apart from the results of \cite{Beghar} and 
\cite{KirSingular}, there are, as far as our knowledge extends, no results
known about the existence of transmutation operators for higher orders.

\end{document}